\numberwithin{equation}{section}
\newcommand{\TU}[1]{\textup{#1}}
\newtheorem{theorem}{Theorem}[section]
\newtheorem{lemma}[theorem]{Lemma}
\newtheorem{proposition}[theorem]{Proposition}
\newtheorem{corollary}[theorem]{Corollary}
\newtheorem{definition}[theorem]{Definition}
\newtheorem{remark}[theorem]{Remark}
\begin{document}
\title{A singular perturbation analysis for the Brusselator}

\author[1,2]{Maximilian Engel\thanks{maximilian.engel@fu-berlin.de}}

\author[1,3]{Guillermo Olic\'on-M\'endez\thanks{g.olicon.mendez@fu-berlin.de}}

\affil[1]{Freie Universit\"at Berlin, Institut f\"ur Mathematik und Informatik, 14195 Berlin, Germany}

\affil[2]{University of Amsterdam, KdV Institute for Mathetmatics, 1098 XG Amsterdam, Netherlands.}

\affil[3]{Universit\"at Potsdam, Institut f\"ur Mathematik, 14476 Potsdam, Germany}

\maketitle

\abstract{In this work we study the Brusselator -- a prototypical model for chemical oscillations-- under the assumption that the bifurcation parameter is of order $O(1/\epsilon)$ for positive $\epsilon\ll 1$. The dynamics of this mathematical model exhibits a time scale separation visible via fast and slow regimes along its unique attracting limit cycle. Noticeably this limit cycle accumulates at infinity as $\epsilon\rightarrow 0$, so that in polar coordinates $(\theta,r)$, and by doing a further change of variable $r\mapsto r^{-1}$, we analyse the dynamics near the \textit{line at infinity}, corresponding to the set $\{r=0\}$. This object becomes a nonhyperbolic invariant manifold for which we use a desingularising rescaling, in order to study the closeby dynamics. Further use of geometric singular perturbation techniques allows us to give a decomposition of the Brusselator limit cycle in terms of four different fully quantified time scales.

$ \ $

{\small \textbf{Keywords:} Brusselator, relaxation oscillations, geometric singular perturbation theory, blow-up method, multiple time scale dynamics}

{\small \textbf{2020 Mathematics subject classification:} 34C26, 34E13, 34E15, 37N25}

\section{Introduction}
The Brusselator model, introduced originally by Ilya Prigogine and René Lefever in \cite{Prigogine68}, has become a milestone in the study of nonlinear chemical dynamics, for its mathematical simplicity and its dynamical richness -- the model admits the minimal conditions for \textit{dissipative structures} to emerge and thus remaining far away from equilibrium. Specifically, under suitable assumptions the Brusselator can be modeled as a 2-dimensional system of ordinary differential equations (ODEs) with a cubic term (associated to a trimolecular reaction step), allowing for self-sustained oscillations to occur \cite[Chapter 7]{NicolisPrigogine}. While earlier and contemporary models of chemical reactions exhibit oscillations, namely those of Turing \cite{Turing52}, Lotka \cite{Lotka20}, and Sel'kov \cite{Selkov68}, 
the Brusselator stands out for being structurally stable and admitting a global attractor for all values of the parameters, see \cite{Erneaux18} for a historical context.

Furthermore, when the Brusselator system is allowed to diffuse (by following \textit{Fick's law}, for example), the Brusselator's spatiotemporal dynamics can be modeled as a reaction-diffusion equation, which exhibits further succesive nonhomogeneous bifurcations \cite{NicolisPrigogine}. Notably, under a suitable combination of the system's parameters, the Brusselator exhibits \textit{Turing bifurcations} \cite{Turing52}, which induce spatiotemporal patterns with different geometrical features \cite{Pena01}. Moreover, it has been proven that under the influence of an external periodic forcing, the system may exhibit chaotic dynamics \cite{Young13}.

In recent years, the development of a bifurcation theory for \textit{random dynamical systems} has permeated also in the field of chemical oscillators. This approach addresses the convergence or separation of nearby initial conditions, evolving under the influence of a common noisy perturbation.
For instance, in \cite{LiZhang19} a so-called \textit{dynamical bifucation} occurs due to a stochastic pitchfork bifurcation of the equilibrium. When the noisy term is included in the Brusselator as a stochastic variation of the bifurcation parameter, numerical explorations conjectured that \textit{parametric noise destroys the Hopf bifurcation} in the system, featuring a unique global attracting random equilibrium, regardless of the values of the parameters \cite{ArnoldBrus}. While this conjecture, to the best of our knowledge, remains still open, in \cite{EngelOlicon23} further numerical explorations exhibit arbitrarily long time windows where this stochastic Brusselator shows noise-induced finite-time instabilities, when the bifurcation parameter is large. 

Already in the absence of noisy perturbations, the Brusselator exhibits a time scale separation precisely when the bifurcation parameter is large. This setting has already been studied in \cite{Nicolis71} from a qualitative point of view and in \cite{Boa76} using asymptotic expansion techniques. 
In this paper we study the Brusselator in its classical ODE formulation under the scope of \textit{geometric singular perturbation theory} (GSPT), precisely under the assumption that the bifurcation parameter is of order $\mathcal{O}(1/\epsilon)$ for positive $\epsilon\ll 1$. The time scale separation of its dynamics becomes readily evident when expressed in the right coordinate frame (see \cite{Nicolis71, LiHouShen16} or Section~\ref{SUBSEC:time_scale_sep} below). 

We show that the Brusselator exhibits \textit{relaxation oscillations}, given by a globally attracting limit cycle, which switches between fast and slow regimes. While the existence of a unique attracting limit cycle can be deduced by simple phase plane analysis or via a transformation which turns the system into a Lienard equation \cite{Ponzo78}, the advantage of using GSPT in the analysis is that we recover important qualitative and quantitative features of the global dynamics like:
\begin{itemize}
    \item the different curves that approximate the limit cycle,
    \item the order of approximation (in terms of the small parameter $\epsilon$), and
    \item the distinct time scales spent near such branches.
\end{itemize}

The detailed geometrical description of the Brusselator presented in this work aims to constitute a first approach to the study of the multiple time scale dynamics of the system, even for more complex variations where, like in the examples mentioned earlier, noisy perturbations or space-time dynamics are involved.

\subsubsection*{GSPT in chemical oscillators}
GSPT addresses the challenges of understanding the dynamics near singular objects in ODEs by employing a combination of analytical and geometrical tools to uncover the underlying structure of the system's solutions. It has proven to be a powerful tool in the study of oscillators, particularly of those of (bio)chemical nature. For instance, in \cite{Szmolyan09}, GSPT methods were used to analyse an autocatalator model, whose analytic expression is very similar to the Brusselator's. In \cite{Kosiuk11}, a multiple time scale analysis is performed on a glycolytic oscillator.

In this work we follow closely some of the techniques used in \cite{Szmolyan09} and \cite{Kosiuk11}, in order to perform a multiple time scale analysis of the Brusselator. The main idea is to construct a \textit{Poincar\'e map} as a composition of different transition maps which closely track the different branches of the limit cycle.
In order to achieve this goal, firstly it is easily shown that the system admits an unbounded \textit{critical manifold} which is partially hyperbolic, and where a slow behaviour dominates. This invariant object is tracked by the limit cycles, until the limit cycle jumps into the fast regime. While typically this transition is induced by a loss of hyperbolicity on the critical manifold, this is not the case for the Brusselator. Inspired by \cite{Kuehn14}, where analysis around unbounded critical manifolds is conducted,
we transform the system into polar coordinates of the form
\begin{equation}
\label{eq:first_polar}
    x=\frac{\cos\theta}{r}, \qquad y=\frac{\sin\theta}{r},
\end{equation}
where we are interested in the dynamics near $r=0$. The resulting equations become a \textit{slow-fast system in nonstandard form} \cite{Wechselberger2020}. After an appropriate time rescaling, the line $\{r=0\}$ (corresponding to the \textit{line at infinity} in the original system) consists of nonhyperbolic equilibria. The essential dynamics as $\epsilon\rightarrow 0$ is ``hidden'' near this curve, for which a further desingularising rescaling is necessary in order to reveal it.

While most of the relevant dynamics can be analysed, and corresponding transition maps can be defined, a single nonhyperbolic equilibrium persists. We further perform a \textit{blow-up transformation} in order to desingularise it and to construct a final transition map. The blow-up method has become a standard tool in order to analyse the dynamics near nonhyperbolic objects, going back to works by Dumortier \cite{Dumortier1978, Dumortier1993} and introduced to fast-slow systems by Dumortier and Roussarie \cite{DumortierRoussarie1996}. The method has yielded many additional results in recent years, for instance, in planar fast-slow ODEs \cite{DeMaesschalckDumortier2005, DeMaesschalckDumortier2010, Krupa01, KrupaSzmolyan2001b}, fast-slow maps \cite{ EngelKuehn2019, Engelcanards, JelbartKuehn, NippStoffer} or also application-oriented problems \cite{DeMaesschalckWechselberger, Szmolyan09}; see \cite{JardonKuehn18} for a survey on the blow-up method and its applications in GSPT.

\subsubsection*{Structure of the paper}
In Section~\ref{SEC:Brusselator}, we introduce the dynamics of the Brusselator and its slow-fast structure given by a linear change of coordinates. We state the main theorem of this work in this coordinate frame. We further perform a first analysis near infinity via the change of variables \eqref{eq:first_polar}. In Section~\ref{SECTION:blowup}, we do a rescaling of the system which desingularises the line $\{r=0\}$, and permits the construction of a singular cycle towards which the limit cycle approaches as $\epsilon\rightarrow 0$. In Section~\ref{SECTION:poincareMap}, we construct a Poincaré map as a composition of four transition maps between transverse sections to the different branches of the singular cycle. This map is a contraction, implying the existence of a unique fixed point which is attracting. In Section~\ref{SEC:MainProof}, we translate our results to the original coordinates. Finally, in Section~\ref{SEC:conclusion} we bring some conclusions and and outlook for future work. 

\subsubsection*{Notation and terminology}
In this work we deal with systems of ODEs of the form
\begin{equation}   \label{eq:general_form}
    \dot{X}=F_0(X)+\epsilon F_1(X,\epsilon),
\end{equation}
where $X\in \mathbb{R}^2$, $F_0,F_1$ are $C^{\infty}$ vector fields for each value of the perturbation parameter $\epsilon\in(0,\epsilon_0)$, where $\epsilon_0\ll1$, and $\dot{\;}=d/dt$ indicates differentiation with respect to the time variable $t\in\mathbb{R}$, being mostly concerned with solutions of the ODE for $t\in\mathbb{R}_+:=[0,\infty)$. Furthermore, the vector field $F_0$ is assumed to
admit a factorization of the form
\begin{equation}
    \label{eq:factorization}
        F_0(X)=N(X)f(X),
\end{equation}
where $N:\mathbb{R}^2\rightarrow\mathbb{R}^2$ is a nonvanishing smooth vector field and $f:\mathbb{R}^2\rightarrow\mathbb{R}$ is a smooth scalar field. We refer to a system of such type as a \textit{slow-fast system in general form}. For an excellent introduction to GSPT in the general setting, see \cite{Wechselberger2020}. 
The dynamics of~\eqref{eq:general_form} can be understood as a singular perturbation of the so-called \textit{layer problem}
\begin{equation}
\label{eq:layer_intro}
    \dot{X}=F_0(X).
\end{equation}
In all our examples, there is a set $\mathcal{S}_0\subset\mathbb{R}^2$ where the layer problem vanishes, i.e
\[
    \mathcal{S}_0=\left\{ X\in\mathbb{R}^2 : F_0(X)=0 \right\} 
    = \left\{ X\in\mathbb{R}^2 : f(X)=0 \right\},
\]
which is a union of curves whose intersection set consists of isolated points.

By performing the time rescaling to the slow variable $\tau:=\epsilon t$, we obtain the equivalent ODE system
\begin{equation}
\label{eq:reduced_intro}
    X'= \frac{1}{\epsilon}N(X) f(X) +
    F_1(X,\epsilon),
\end{equation}
where $\cdot ':=d/d\tau$. The singular limit $\epsilon\rightarrow0$ for this equation is well defined only when the initial condition lies in $\mathcal{S}_0$ and $F_1(X,0)$ maps to the tangent space $T_X\mathcal{S}_0$. We refer to such a restricted system as the \textit{reduced problem}.

Since $\mathcal{S}_0$ consists entirely of equilibria, any compact submanifold of $\mathcal{S}_0$ is invariant for the layer problem \eqref{eq:layer_intro}. It is natural to ask whether it persists in the $C^r$-topology as an invariant manifold $\mathcal{S}_\epsilon$ for \eqref{eq:general_form} when $\epsilon>0$. In \cite{Fenichel71}, Neil Fenichel gave a positive answer in a general setting, under the assumption that the invariant manifold is \textit{normally hyperbolic}. In our specific setting, a compact connected submanifold $M\subset \mathcal{S}_0$ is normally hyperbolic, if for every $z\in M$ the spectrum of the Jacobian matrix $DF_0(z)$ has a nonzero eigenvalue\footnote{Since $\mathcal{S}_0$ is invariant and the state space is $\mathbb{R}^2$, the second eigenvalue is necessarily $0$.}.
In the specific case of $N=(1,0)^{\top}$ in the factorisation \eqref{eq:factorization}, we can set $X=(x,y)$ such that the fast and slow systems read respectively as
\[
    \begin{array}{rcl}
    \dot{x}&=&f(x,y,\epsilon)\\
    \dot{y}&=&\epsilon g(x,y,\epsilon)
    \end{array},
    \qquad
     \begin{array}{rcl}
    \epsilon x'&=&f(x,y,\epsilon) \\
    y'&=&g(x,y,\epsilon)
    \end{array},
\]
for some smooth scalar fields $f,g:\mathbb{R}^2\times\mathbb{R}_+\rightarrow\mathbb{R}$. The systems written in this specific form constitute a \textit{slow-fast system in standard form}, and whenever this is not the case we refer to them as in \textit{nonstandard form}. For a comprehensive overview of GSPT in standard form, see \cite{Kuehn15}. On the one hand, in the standard case, the layer problem becomes simply
\[
\begin{array}{rcl}
    \dot{x} &=& f(x,y,0)\\
    \dot{y} &=& 0,
\end{array}
\]
which is a family of one-dimensional ODEs, parametrized by the initial condition $y=y_0$. On the other hand, the reduced problem becomes an algebraic-differential equation of the form
\[
\begin{array}{rcl}
    0&=&f(x,y,0)\\
    y'&=&g(x,y,0),
\end{array}
\]
where the dynamics is defined on the critical set
\[
    \mathcal{S}_0=\{(x,y) : f(x,y,0)=0\}.
\]
Fenichel's theorems on the smooth persistence of $S_0$ and related results for systems in standard form were originally proved in \cite{Fenichel79}. We refer the reader to \cite[Chapter 2]{Kuehn15} for a concise exposition of general Fenichel theory, even in higher dimensional systems.

\noindent\textit{Asymptotic expansions.} Throughout this work we approximate real/vector valued functions $f(\epsilon)$ as $\epsilon\rightarrow 0$ by an appropriate real/vector valued function $g$. We say that:
\begin{itemize}
    \item $f(\epsilon)=\mathcal{O}\left(g(\epsilon) \right)$ as $\epsilon\rightarrow 0$, if there exists $K>0$ and $\epsilon_0>0$ such that
    \[\Vert f(\epsilon) \Vert\leq K \Vert g(\epsilon)\Vert\] for all $\epsilon \in[0,\epsilon_0]$,
    \item $f(\epsilon)=\Omega\left(g(\epsilon) \right)$ as $\epsilon\rightarrow 0$, if there exists $K>0$ and $\epsilon_0>0$ such that
    \[\Vert f(\epsilon) \Vert\geq K \Vert g(\epsilon)\Vert\] for all $\epsilon \in[0,\epsilon_0]$,
     \item $f(\epsilon)=\Theta\left(g(\epsilon) \right)$ as $\epsilon\rightarrow 0$, if $f(\epsilon)=\mathcal{O}(g(\epsilon))$ and $f(\epsilon)=\Omega(g(\epsilon))$ as $\epsilon\rightarrow0$.
    \item $f(\epsilon)=o\left(g(\epsilon) \right)$, if 
    \[
        \lim_{\epsilon\rightarrow 0} \frac{\Vert f(\epsilon)\Vert}{\Vert g(\epsilon)\Vert}=0,
    \]
\end{itemize}
In the following we omit the reference to $\epsilon\rightarrow 0$, as it should be clear from the context.  

We are interested in the time scales of the dynamics in different regions of the state space, according to the following definition.
\begin{definition}
\label{DEF:timescale}
We say that the \textit{time scale} of a dynamical system $\varphi_t$ depending on a small parameter $\epsilon$ near a curve $\mathcal{C}=\{\sigma(t): t\in[a,b]\}$ is $\mathcal{O}(g(\epsilon))$ if for every $\delta>0$ sufficiently small there are transverse sections $\Sigma_0$ and $\Sigma_{hit}$ through $\sigma(a+\delta)$ and $\sigma(b-\delta)$, respectively,  such that
\begin{enumerate}
    \item the length of $\Sigma_{hit}$ is at most $\delta$,
    \item for every initial condition $x_0\in \Sigma_0$, the hitting time
    \[
    \tau_+\equiv\tau_+(x_0):=\min\{t\geq 0 : \varphi_t(x_0)\in \Sigma_{hit}\}
\]
is finite, and
\item $\tau_+$ satisfies that $\tau_+=\mathcal{O}(g(\epsilon))$. 
\end{enumerate}
\end{definition}
\noindent Analogous definitions follow by replacing $\mathcal{O}\left(g(\epsilon) \right)$ with $\Omega(g(\epsilon))$, $\Theta(g(\epsilon))$, or $o(g(\epsilon))$.

\section{Slow-fast structure for the Brusselator}
\label{SEC:Brusselator}
We consider the Brusselator chemical reaction network
\begin{equation}\label{eq:BRUSSsystem}
\begin{array}{lrcl}
R_1: &	A & \rightarrow& X \\
R_2: &	B+X & \rightarrow& Y+D\\
R_3: &	2X+Y & \rightarrow& 3X\\
R_4: &	X & \rightarrow& E,
\end{array}
\end{equation}
where we assume that the reaction rate of each reaction channel $R_i$ ($i=1,2,3,4$) obeys the \textit{law of mass actions}. We further assume that all chemical species in \eqref{eq:BRUSSsystem} scale adequately with the volume of the domain where the reactions occur, so that the evolution of their concentrations are properly modeled via a system of ordinary differential equations (ODEs), named \textit{reaction rate equations} \cite{Winkelmann2020}. The concentration of species $A$ and $B$ are assumed to be constant parameters. Under these assumptions, the dynamics of \eqref{eq:BRUSSsystem} is given by
\begin{equation}
\label{eq:BrusDet}
\begin{array}{rcl}
    X'&=&a-(1+b)X+X^2Y, \\
    Y'&=&bX-X^2Y,
\end{array}
\end{equation}
where $X(\tau),Y(\tau)$ are the densities of chemical species $X,Y$ at time $\tau$, and $a,b>0$ are the concentrations of $A$ and $B$, respectively. As before, $':=d/d\tau$. Because of its chemical interpretation, the state space is the set $\mathbb{R}_+^2:=\{(X,Y) : X,Y\geq 0\}$.

\subsection{Hopf bifurcation}
The system \eqref{eq:BrusDet} has a unique equilibrium point given by $(X,Y)=(a,b/a)$. The Jacobian matrix on the equilibrium is given by
\[
    \left[ 
    \begin{array}{cc}
        b-1 & a^2 \\
        -b & -a^2
    \end{array}
    \right].
\]
Since its stability is given by its trace, it follows that the equilibrium is (locally) asymptotically stable if and only if $b<1+a^2$. The system exhibits a \textit{supercritical Hopf bifurcation} at $b_{crit}=1+a^2$, where the equilibrium becomes unstable whenever $b>b_{crit}$, and an attracting limit cycle emerges (see for instance \cite[Theorem 3.3]{kuznetsov04}). We refer to Figure~\ref{fig:limitcycle_growing} for a qualitative depiction of the limit cycle for growing values of $b$.
\begin{figure}[ht]
     
      \begin{subfigure}{.45\linewidth}
      \centering
      \begin{overpic}[width=\linewidth]{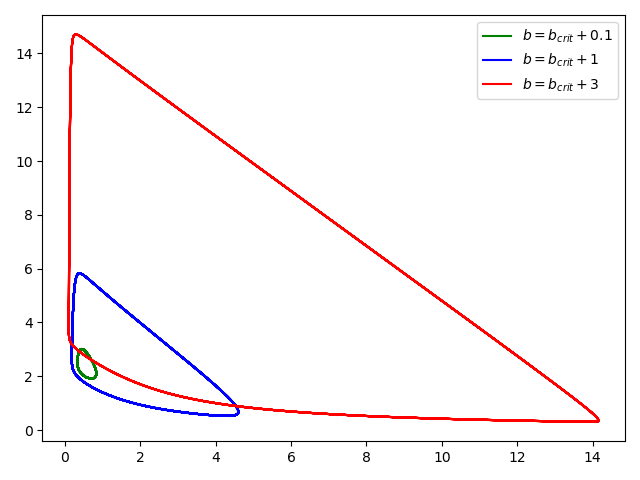}
      \put(50,-2){$X$}
      \put(-3,40){$Y$}
      \end{overpic}
      \caption{}
      \end{subfigure}
      \begin{subfigure}{.45\linewidth}
      \centering
      \begin{overpic}[width=\linewidth]{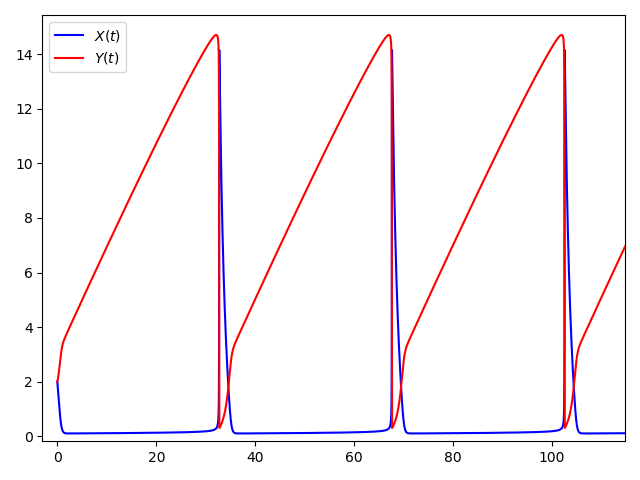}
      \put(50,-2){$t$}
      \end{overpic}
      \caption{}
      \end{subfigure}

       \hfill
       \caption[Limit cycles of the Brusselator for different parameters]{In (a), three examples of the limit cycle $\gamma_b$ of \eqref{eq:BrusDet} for different values of the parameter $b$ are portrayed, while fixing $a=0.5$. Observe that the limit cycle grows in size as $b$ grows. Moreover, as we see later, the cycle $\gamma_b$ diverges entirely in the sense that $\inf_{z\in\gamma_b} \Vert z\Vert\rightarrow \infty$ as $b\rightarrow\infty$. In (b), the time series of \eqref{eq:BrusDet} are presented for $a=0.5$ and $b=b_{crit}+3$. Note that $Y(t)$ exhibits a big drop as the transition from a slow regime into a fast regime occurs, which seems to happen almost instantaneously. Similarly, as $X(t)$ behaves in antiphase to $Y(t)$, its behaviour resembles a train of instantaneous pulses.}
        \label{fig:limitcycle_growing}
\end{figure}

\subsection{Time scale separation}
\label{SUBSEC:time_scale_sep}
In the following we are interested in parameter values such that $b\gg a$. We fix a value for $a>0$, and assume that $b=a/\epsilon$, where $\epsilon\ll 1$, i.e.~in particular $b>b_{crit}$.
A slow-fast structure in the system \eqref{eq:BrusDet} is unveiled via the change of coordinates used in \cite{Nicolis71, LiHouShen16}
\[
\begin{array}{rcl}
    x&=&Y,\\
    y&=&X+Y.
\end{array}
\]
In this new coordinate system, the dynamics are given by the ODEs
\begin{equation}
    \label{eq:slowDynamics}
    \begin{array}{rcl}
        \epsilon x'&=&a(y-x)-\epsilon x(y-x)^2,\\
        y'&=& a-(y-x).
    \end{array}
\end{equation}
Equivalently, by a rescaling of the time variable $t=\tau/\epsilon$, we obtain the system
\begin{equation}
    \label{eq:fastDynamics}
    \begin{array}{rcl}
        \dot{x}&=&a(y-x)-\epsilon x(y-x)^2,\\
        \dot{y}&=&\epsilon\left[ a-(y-x)\right],
    \end{array}
\end{equation}
where $\dot{\;} := d/dt$.

Equations \eqref{eq:slowDynamics} and \eqref{eq:fastDynamics} form a \textit{fast-slow system} in standard form, where the slow dynamics is captured by system (\ref{eq:slowDynamics}) and the fast dynamics by \eqref{eq:fastDynamics}, respectively. The reduced problem, by setting $\epsilon=0$ in \eqref{eq:slowDynamics}, reads as
\begin{equation}
    \label{eq:reduced_1}
        \begin{array}{rcl}
            0 &=& a(y-x), \\
            y' &=& a-(y-x),
        \end{array}
\end{equation}
while the \textit{layer problem} is obtained by replacing $\epsilon=0$ in \eqref{eq:fastDynamics}, obtaining
\begin{equation}
    \label{eq:layer}
        \begin{array}{rcl}
            \dot{x} &=& a(y-x), \\
            \dot{y} &=& 0.
        \end{array}
\end{equation}
Equation~\eqref{eq:reduced_1} yields a \emph{reduced flow} on the \emph{critical manifold} 
\[
    S_0:=\left\{ (x,y)\in \mathbb{R}^2_+ : x=y \right\}.
\]
Observe that $S_0$ is a set of points which are equilibria for \eqref{eq:layer}. 
It can be readily observed that $S_0$ is an attracting \emph{normally hyperbolic invariant manifold}. Indeed, let $h(x,y)=(a(y-x),0)^\intercal$ so that
\[
    Dh(z)=\left[ \begin{array}{cc}
                        -a & a\\
                        0 & 0
                    \end{array}\right].
\]
It is then straightforward to see that there is a nontrivial eigenvalue, namely $\lambda_1=-a<0$, at any $z\in S_0$. Notice that since the system is given in standard form, this is equivalent to the condition $\partial_x f(x,x)<0$, where $x'=f(x,y)$.

The solutions of \eqref{eq:reduced_1} can be calculated explicitly: given $x_0\in \mathbb{R}_+$, and an initial condition $(x_0,x_0)\in S_0$, we have that
\[
    x(t)=y(t)=x_0+at.
\]
By Fenichel's theorem, see for instance \cite[Theorem 2.4.2]{Kuehn15}, for each compact interval $[a,b]\subset \mathbb{R}_+$ there is $\epsilon_0$ sufficiently small such that for every $\epsilon\in[0,\epsilon_0]$ there exists a locally invariant (slow) manifold 
\[
    S_{\epsilon}=\{(x,y)\in\mathbb{R}^2 : y=H(x,\epsilon)\},
\]
where $H$ is a smooth function such that $H(x,0)=x$.

As seen in Figure~\ref{fig:limitcycle_standard}, the limit cycle follows $S_0$ closely for $\epsilon>0$, and eventually takes a turn. In many other situations, this turn is generically due to a loss of hyperbolicity. As calculated above, this is not the case for $S_0$ and it will be a crucial contribution of the paper to understand the behavior of the  limit cycle in this situation.

For simplicity, in the rest of this paper we refer to a curve $\{\sigma_i(t):t\in[a,b]\}$ simply as $\sigma_i$. We state the main result of this work in the following theorem.
\begin{theorem}
    \label{THM:limit cycle_ original coordinates}
        For each $\epsilon>0$ sufficiently small, system \eqref{eq:fastDynamics} admits a unique attracting limit cycle $\gamma^\epsilon$, which exhibits a time scale separation (cf. Definition~\ref{DEF:timescale}) near the
        cycle composed of four curves parameterised by the functions
\begin{align*}
           \sigma_1(\theta)=& \frac{1}{\sqrt{\epsilon}}
           \left( \frac{\cos\theta}{\rho_{\epsilon}\sin\theta}, \frac{1}{\rho_\epsilon} \right), \qquad \textup{ for } \theta\in[\pi/4,\pi/2], 
          \\
            \sigma_2(\theta)=& \frac{1}{\sqrt{\epsilon}}\left( 
            \sqrt{\frac{a\cos\theta}{\sin\theta-\cos\theta}}, \sqrt{\frac{a\sin^2\theta}{\cos\theta(\sin\theta-\cos\theta)}}\right), \quad \textup{ for } \theta\in\left[ \arctan(2),\pi/2 \right],
        \\
            \sigma_3(\theta)=& \frac{1}{\sqrt{\epsilon}}\left( 2\sqrt{a}\cot\theta, 2\sqrt{a} \right), \qquad \textup{ for } \theta\in\left[\pi/4,\arctan(2) \right],
        \\
             \sigma_4(r)=& \frac{1}{\sqrt{\epsilon}}\left( 
          \frac{1}{\sqrt{2}r}, \frac{1}{\sqrt{2}r}
          \right), \qquad \textup{ for } r\in\left[\rho_{\epsilon},\frac{1}{2\sqrt{2a}}\right],
\end{align*}
where $\rho_{\epsilon}=\Theta\left( \epsilon^{3/2} \right)$. The corresponding time scales near each branch are:
\begin{itemize}
    \item near $\sigma_1$, $t=\Theta \left( \epsilon^{3} \right)$, 
    \item near $\sigma_2$, $t=\Theta\left(\epsilon^{-1} \right)$,
    \item near $\sigma_3$, $t=\Theta(1)$, and
    \item near $\sigma_4$, $t=\Theta\left(\epsilon^{-3/2} \right)$.
\end{itemize}
\end{theorem}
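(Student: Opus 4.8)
The plan is to prove Theorem~\ref{THM:limit cycle_ original coordinates} by reducing it, via the announced sequence of coordinate changes, to a statement about a desingularised planar fast--slow system to which standard GSPT machinery applies. First I would transform the standard-form system \eqref{eq:fastDynamics} into the polar coordinates \eqref{eq:first_polar}, i.e. $x = \cos\theta/(r\sqrt{\epsilon})$, $y=\sin\theta/(r\sqrt{\epsilon})$, absorbing the $1/\sqrt{\epsilon}$ scaling that appears in every $\sigma_i$; this is precisely the rescaling suggested in the introduction for analysing the line at infinity $\{r=0\}$. Carrying out the chain-rule computation turns \eqref{eq:fastDynamics} into a slow--fast system in nonstandard form for $(\theta,r)$ near $\{r=0\}$, and after the time rescaling mentioned in the introduction the locus $\{r=0\}$ becomes a curve of (nonhyperbolic) equilibria. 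The four parameterisations $\sigma_1,\dots,\sigma_4$ in the statement are exactly the images, under the inverse of this change of variables, of four branches of a \emph{singular cycle} living in the $(\theta,r)$ plane: one should verify that each $\sigma_i$ solves the corresponding reduced/layer equation in the appropriate chart. In particular $\sigma_4$, expressed in $r$ rather than $\theta$, corresponds to the segment along the diagonal $\{\cos\theta=\sin\theta\}$ which is the image of the critical manifold $S_0=\{x=y\}$ at infinity, while $\sigma_1,\sigma_2,\sigma_3$ trace the fast/slow branches near $\{r=0\}$.

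Next I would carry out the desingularising rescaling of Section~\ref{SECTION:blowup} (and the blow-up of the single remaining nonhyperbolic equilibrium announced in the introduction) to resolve the degeneracy of $\{r=0\}$. In each chart the blown-up vector field should have hyperbolic or semi-hyperbolic equilibria and partially hyperbolic invariant branches, so that Fenichel theory and standard hyperbolic/nonhyperbolic passage estimates (entry--exit, the results of Krupa--Szmolyan type for the blow-up chart) become available. With the singular cycle and its hyperbolicity structure in hand, the existence and uniqueness of the attracting limit cycle $\gamma^\epsilon$ follows from the construction of Section~\ref{SECTION:poincareMap}: define transverse sections $\Sigma_1,\dots,\Sigma_4$ near the four branches, build the four transition maps $\Pi_i:\Sigma_i\to\Sigma_{i+1}$ by tracking trajectories along/near each $\sigma_i$, and compose them into a Poincar\'e map $\Pi=\Pi_4\circ\Pi_3\circ\Pi_2\circ\Pi_1$. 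Showing that $\Pi$ is a contraction for $\epsilon$ small — the product of the contraction rates along the attracting branches must dominate any expansion picked up in the passage near the nonhyperbolic point — yields a unique attracting fixed point, hence the unique attracting cycle, which by construction lies within $\mathcal{O}(\text{blow-up order})$ of the singular cycle $\sigma_1\cup\sigma_2\cup\sigma_3\cup\sigma_4$.

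Finally I would establish the four time-scale estimates. For each branch I would set up the sections $\Sigma_0$ and $\Sigma_{hit}$ required by Definition~\ref{DEF:timescale} and estimate the hitting time $\tau_+$ both from above and from below, obtaining the sharp $\Theta(\cdot)$ bounds. The slow branch $\sigma_2$, traversed at the slow rate, should give $t=\Theta(\epsilon^{-1})$; the fast branch $\sigma_3$ should give $t=\Theta(1)$; and the estimates $\Theta(\epsilon^{3})$ near $\sigma_1$ and $\Theta(\epsilon^{-3/2})$ near $\sigma_4$ come from integrating the (desingularised) flow across the corresponding stretches, where the value $\rho_\epsilon=\Theta(\epsilon^{3/2})$ enters as the endpoint matching the blow-up scale. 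Throughout, one must keep careful track of how the time rescaling $t=\tau/\epsilon$, the $1/\sqrt{\epsilon}$ spatial scaling, and the desingularising time change in each chart combine, since the quoted time scales are stated in the original time $t$ of \eqref{eq:fastDynamics}.

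I expect the main obstacle to be the passage near the \emph{single persistent nonhyperbolic equilibrium} on $\{r=0\}$ that requires the blow-up: quantifying both the transition map and the time spent there with matching upper and lower bounds (so as to pin down $\rho_\epsilon=\Theta(\epsilon^{3/2})$ and the $\Theta(\epsilon^{3})$ and $\Theta(\epsilon^{-3/2})$ time scales) is delicate, because it demands control of the flow in several blow-up charts simultaneously and careful bookkeeping of the accumulated time rescalings. The contraction estimate for $\Pi$ hinges on the same passage, since that is the only place where hyperbolic expansion could spoil the net contraction supplied by the attracting branches.
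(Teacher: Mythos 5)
Your overall route coincides with the paper's: polar coordinates \eqref{eq:first_polar}, the desingularising rescaling \eqref{eq:rescale}, blow-up of the remaining nonhyperbolic point $P_0$, and a Poincar\'e map composed of four transition maps whose contraction yields the unique attracting cycle --- this is exactly Theorem~\ref{THM:PoincareMap}, of which the present theorem is the translation back to $(x,y)$. But there is a genuine gap in how you propose to obtain the time scales in the original time $t$, and it concerns the hardest estimate. The ultrafast bound $t=\Theta(\epsilon^3)$ near $\sigma_1$ does \emph{not} come from the blow-up charts, as your plan assumes: in the desingularised time the passage along $\hat{\sigma}_1$ takes $\Theta(1)$, and the entire factor $\epsilon^3$ arises only when undoing the state-dependent reparametrisation $t_2(t)=\int_0^t r^{-2}(s)\,ds$ (from \eqref{eq:time_change} and \eqref{eq:rescale}) along an orbit with $r=\Theta(\rho_\epsilon)=\Theta(\epsilon^{3/2})$. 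Since $r$ is not bounded away from zero there, this requires matching two-sided control of $r(s)$ along the whole excursion. The paper gets the lower bound from a separate claim --- the vector field of \eqref{eq:MainBrusselator} points into the epigraph of each fast fibre $r=\rho\sin\theta$ for $\epsilon,\rho$ small, whence $r(s)\geq\rho_\epsilon\sin\theta(s)$ --- and the upper bound from the Riccati-type inequality $\dot{r}\leq r-F_\epsilon r^3$, integrated explicitly. ``Integrating the desingularised flow across the stretch,'' as you put it, does not rule out that $r(s)$ grows far above $\rho_\epsilon$ mid-excursion, which would destroy the upper bound $t=\mathcal{O}(\epsilon^3)$; without these a priori bounds the $\Theta(\epsilon^3)$ claim is unproven.

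A second, smaller gap: you attribute the superslow $\Theta(\epsilon^{-3/2})$ near $\sigma_4$ to the blow-up passage and to generic Fenichel theory, but the first-order reduced flow on $\mathcal{S}_0^1$ vanishes identically (since $G(\pi/4,r;0)\equiv 0$), so the leading-order reduced problem gives no drift at all there. One needs the higher-order expansion of the slow manifold, $\theta_\epsilon(r)=\pi/4+\epsilon^{3/2}r/\sqrt{2}+o(\epsilon^{3/2})$ (Proposition~\ref{PROP:S1_approximation}), producing the projected equation $\dot{r}=-\epsilon^{3/2}ar^4/\sqrt{2}$, before the hitting-time estimate of Definition~\ref{DEF:timescale} yields $\Theta(\epsilon^{-3/2})$. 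Your instinct that the blow-up is the delicate step is correct only for $\rho_\epsilon$ itself: the weights $\omega=\eta^6\bar{\omega}$, $r=\eta^3\bar{r}$, $\varepsilon=\eta\bar{\varepsilon}$ with $\epsilon=\varepsilon^2$, combined through $\Pi_{\TU{in}}$, $\Pi_{\TU{tran}}$, $\Pi_{\TU{out}}$, give $\hat{r}_1=\Theta(\epsilon^{3/2})$ and hence $\rho_\epsilon=\Theta(\epsilon^{3/2})$; the two time-scale exponents discussed above require the separate arguments just described.
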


\begin{figure}[ht]
     \centering
       \begin{overpic}[width=.5\linewidth]{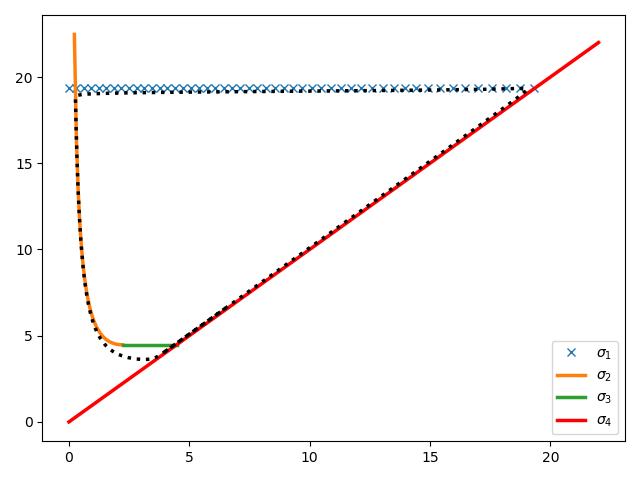}
    \put(50,-2){$x$}
    \put(-4,40){$y$}
    \end{overpic}
       \hfill
       \caption[Limit cycle of the Brusselator]{A numerical approximation of the limit cycle of system \eqref{eq:slowDynamics} for $a=0.5$ and $\epsilon=0.1$ is displayed as black, dotted curve. The approximation of the limit cycle is given by the curves $\sigma_1$, $\sigma_2$, $\sigma_3$, and $\sigma_4$, which are portrayed here. The curve $\sigma_1$ was obtained numerically, as there is no explicit expression of $\rho_\epsilon$.}        
      \label{fig:limitcycle_standard}
\end{figure}
We refer the reader to Section~\ref{SEC:MainProof} for the proof of this theorem which describes the dynamics of the Brusselator as a four-stroke relaxation oscillator, where its behaviour is split in the transitions ultrafast-to-slow-to-fast-to-superslow in each cycle. We call the regime near $\sigma_1$ ultrafast due to the fast behaviour which resembles an ``instantaneous jump of infinite length" as $\epsilon\rightarrow0$, and the regime near $\sigma_4$ superslow  since it is the slowest time scale of the four listed in Theorem~\ref{THM:limit cycle_ original coordinates}.
In Figure~\ref{fig:limitcycle_standard}, it becomes evident that there is a very good agreement between the limit cycle and the curves $\sigma_2$ and $\sigma_4$, while the deviation from $\sigma_3$ is an effect of a transition through a regular fold point, see Subsection~\ref{SEC:PI_23}. Note that  $\sigma_2$ and $\sigma_4$ are parameterisations of the two branches of the nullcline $\dot{x}=0$ for \eqref{eq:fastDynamics}, as already obtained in \cite{LiHouShen16}; $\sigma_2$ satisfies $y=x+\frac{a}{\epsilon x}$ and $\sigma_4$ satisfies $y=x$. Theorem~\ref{THM:limit cycle_ original coordinates} does not give an explicit calculation of $\rho_\epsilon$, for which only a numerical approximation of $\sigma_1$ can be provided in Figure~\ref{fig:limitcycle_standard}.

Just as the numerical solutions indicate (see Figure~\ref{fig:limitcycle_growing}): since each $\sigma_i$ diverges entirely to infinity as $\epsilon\rightarrow0$, then so does the limit cycle of \eqref{eq:slowDynamicsCompact}. However, Theorem~\ref{THM:limit cycle_ original coordinates} suggests that the limit cycle is better observed when rescaling $(\bar{x},\bar{y})=\left(\sqrt{\epsilon}x,\sqrt{\epsilon}y  \right)$, see Figure~\ref{fig:limitcycle_comp}. A reformulation of Theorem~\ref{THM:limit cycle_ original coordinates} in this rescaled coordinate system can be found in Corollary~\ref{CORO:rescaled}. Its advantage is that the curves $\bar{\sigma}_i=\sqrt{\epsilon}\sigma_i$ for $i=2,3,4$ are fixed geometric objects which are independent of $\epsilon$, and, hence, allow us to compare them with the rescaled limit cycle $\bar{\gamma}^{\epsilon}=\sqrt{\epsilon} \gamma^{\epsilon}$.

\begin{figure}[ht]
     \centering  
      \begin{overpic}[width=.5\linewidth]{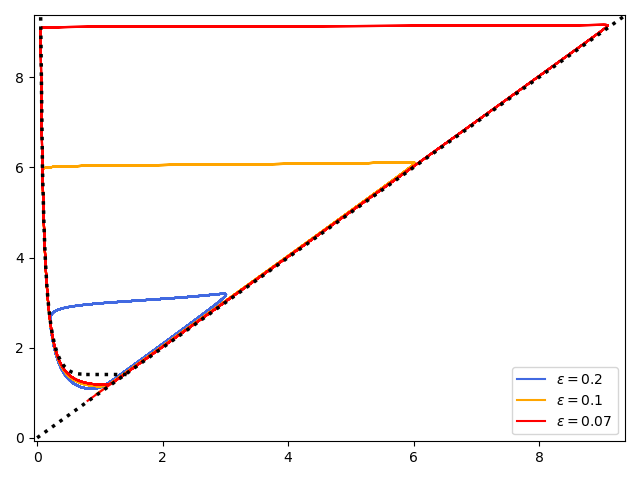}
    \put(50,-2){$\sqrt{\epsilon} x$}
    \put(-5,40){$\sqrt{\epsilon} y$}
    \put(7,57){\scriptsize $\sqrt{\epsilon}\sigma_2$}
    \put(10,19){\scriptsize $\sqrt{\epsilon}\sigma_3$}
    \put(50,35){\scriptsize $\sqrt{\epsilon}\sigma_4$}
    \end{overpic}
       \hfill
       \caption[Variation of Limit cycles]{The profile of the limit cycles in the $(\sqrt{\epsilon}x,\sqrt{\epsilon} y)$-plane, for different values of $\epsilon$. As $\epsilon$ becomes smaller, the examples agree better with the rescaled curves $\sqrt{\epsilon}\sigma_i$, $i=2,3,4$, from Theorem~\ref{THM:limit cycle_ original coordinates} which are depicted as black dotted lines. Their upper branches take a much higher excursion as indicated from the order of $\rho_{\epsilon}$. Notice that as $\epsilon$ becomes smaller, this upper branch tends to straighten horizontally even more.}
        \label{fig:limitcycle_comp}
\end{figure}

\subsection{Analysis near infinity}
The set $\left\{ (x,y)\in\mathbb{R}_+^2 : y\geq x  \right\}$ is absorbing and positively invariant under the dynamics of the slow-fast system \eqref{eq:slowDynamics}-\eqref{eq:fastDynamics}. 
Due to the geometry of the state space, and our goal to describe a limit cycle, let us consider the system in polar-like coordinates
\begin{equation}
\label{eq: ChangeCoords_First}
    x=\frac{\cos\theta}{r}, \qquad y=\frac{\sin\theta}{r},
\end{equation}
so that \eqref{eq:fastDynamics} becomes the singularly perturbed system in nonstandard form
\begin{equation}
\label{eq:slowfast_polar}
\begin{array}{lcl}
\dot{\theta} &=& -a\sin\theta(\sin\theta-\cos\theta) \\
& & +\epsilon\left[ \frac{\sin\theta\cos\theta(\sin\theta-\cos\theta)^2}{r^2}-\cos\theta(\sin\theta-\cos\theta) +ar\cos\theta \right],\\

\dot{r}&=&-ar\cos\theta(\sin\theta-\cos\theta) \\
& & +
\epsilon \left[ \frac{\cos^2\theta(\sin\theta-\cos\theta)^2}{r}+r\sin\theta(\sin\theta-\cos\theta)-ar^2\sin\theta \right],
\end{array}
\end{equation}
where the corresponding phase space is given by $\{ (r,\theta) : r\geq 0, \theta\in[\pi/4,\pi/2] \}$ and $r=0$ corresponds to the \textit{line at infinity} for system \eqref{eq:fastDynamics}.

In order to extend the state space by $\{r=0\}$,
we perform the time reparametrisation 
\begin{equation}
\label{eq:time_change}
    t_1(t)=\int_{0}^t{\frac{1}{r^2(s)}\;ds},
\end{equation}
yielding thus the ODE system
\begin{equation}
    \label{eq:slowDynamicsCompact}
    \begin{array}{lll}
    \dot{\theta}&=&-ar^2\sin\theta(\sin\theta-\cos\theta) \\
    & & + \epsilon \left [
    \sin\theta\cos\theta(\sin\theta-\cos\theta)^2- r^2\cos\theta(\sin\theta-\cos\theta)+ar^3\cos\theta\right]\\
    \dot{r} &=& -ar^3\cos\theta(\sin\theta-\cos\theta) \\
    & & +\epsilon\left[ r\cos^2\theta(\sin\theta-\cos\theta)^2+ r^3\sin\theta(\sin\theta-\cos\theta)- ar^4\sin\theta \right],
    \end{array}
\end{equation}
where, by abuse of notation, we will take $\dot{\;} \equiv d/dt_1$. 

Firstly, we analyse the layer problem of \eqref{eq:slowDynamicsCompact} by taking $\epsilon=0$, for which the system reads as
\begin{equation}
    \label{eq:layerproblem}
    \begin{array}{rcl}
        \dot{\theta} &=&-ar^2\sin\theta(\sin\theta-\cos\theta), \\
        \dot{r} &=& -ar^3\cos\theta(\sin\theta-\cos\theta).
    \end{array}
\end{equation}
A parametrisation of the orbits of \eqref{eq:layerproblem} can be obtained from the equation
\[
        \frac{dr}{d\theta}=r\cot\theta,
\]
whose solutions are given by $r(\theta)=r_0\sin\theta$ for $\theta\in[\pi/4,\pi/2]$, where $r(\pi/2)=r_0$. 

It follows directly from \eqref{eq:layerproblem} that the lines 
\[
    \{\theta=\pi/4\}, \quad \{r=0\}
\]
consist entirely of equilibria. Let $J(\theta,r)$ be the Jacobian matrix of the vector field on the right hand side of equation~\eqref{eq:layerproblem}. 
For the respective equilibrium points, we obtain
\[
J(\pi/4,r)=\left[
\begin{array}{cc}
     -ar^2 & 0 \\
     -ar^3& 0 
\end{array}\right], \qquad J(\theta,0)=\left[
\begin{array}{cc}
     0 & 0 \\
     0 & 0 
\end{array}\right].
\]
Since $\{(\pi/4,r)\}$ is partially hyperbolic away from $r=0$, standard singular perturbation theory provides the necessary tools to analyse the system along such a line. However, this is not the case for the line $\{r=0\}$ which consists of fully nonhyperbolic equilibria. In order to extend the analysis further, and to comprehend the reduced problem as well, we perform an $\epsilon$-dependent rescaling of the variable $r$ which desingularises the nonhyperbolic line.

\begin{figure}[ht]
     \centering
      \begin{overpic}[width=.5\textwidth]{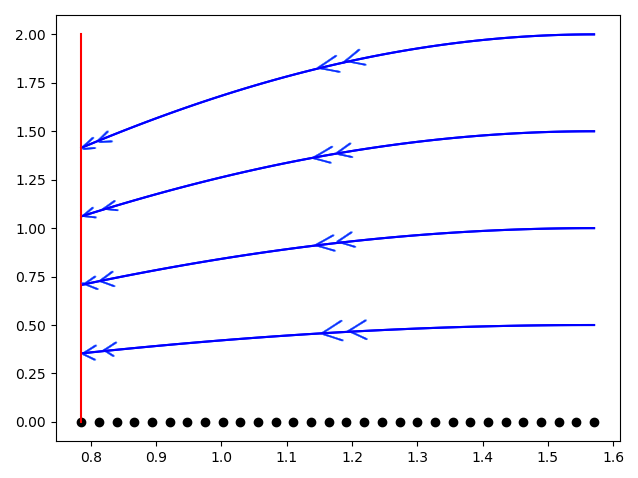}
        \put(53,-2){ $\theta$}
        \put(-3,38){ $r$}
        \end{overpic}
       \hfill
       \caption[Stable manifolds - critical case]{Dynamics of the layer problem \eqref{eq:layerproblem}. Each fast fibre, on which the dynamics of the layer problem converge to different points $(r,\pi/4)$ with $r>0$, are depicted in blue. The arc 
       $\{r=0\}$ consists entirely of nonhyperbolic fixed points.}
        \label{fig:StableManifolds}
\end{figure}

\section{Desingularisation of the system}
\label{SECTION:blowup}
We introduce the singular rescaling
\begin{equation}
    \label{eq:rescale}
        \begin{array}{rcl}
             r&=&\sqrt{\epsilon}\;\bar{r}, \qquad 
             t_2=\epsilon t_1,
             \end{array}
\end{equation}
so that, by writing $r$ instead of $\bar r$ for ease of notation,
\eqref{eq:slowDynamicsCompact} reads as
\begin{equation}
    \label{eq:MainBrusselator}
    \begin{array}{lll}
    \dot{\theta}&=&-ar^2\sin\theta(\sin\theta-\cos\theta) + \sin\theta\cos\theta(\sin\theta-\cos\theta)^2+
    \epsilon g_1(\theta,r,\epsilon)\\
    \dot{r}&=&-ar^3\cos\theta(\sin\theta-\cos\theta)+r\cos^2\theta(\sin\theta-\cos\theta)^2+ \epsilon g_2(\theta,r,\epsilon),
    \end{array}
\end{equation}
where, again, we transfer the notation $\dot{\;}:=d/dt_2$ and
\[
\begin{array}{rcl}
    g_1(\theta,r,\epsilon)&=&
    -r^2\cos\theta(\sin\theta-\cos\theta) +\sqrt{\epsilon}\;ar^3\cos\theta, \\
    g_2(\theta,r,\epsilon) &=& r^3\sin\theta(\sin\theta-\cos\theta) -\sqrt{\epsilon}\;ar^4\sin\theta.
\end{array}
\]
Moreover, since $\epsilon\ll1$, \eqref{eq:MainBrusselator} is again a singularly perturbed system in nonstandard form. As opposed to \eqref{eq:slowDynamics}, the line $\{r=0\}$ corresponds now to a heteroclinic orbit connecting $(\pi/4,0)$ to $(\pi/2,0)$.

\subsection{The layer problem}
\label{SUBSEC: layer}
The layer problem of \eqref{eq:MainBrusselator}, i.e. when $\epsilon=0$, is given by the ODEs
\begin{equation}
    \label{eq:critical}
    \begin{array}{rcl}
        \dot{\theta}&=&-\sin\theta (\sin\theta-\cos\theta)\cdot p(\theta,r),
        \\
        \dot{r} &=& -r\cos\theta (\sin\theta-\cos\theta)\cdot p(\theta,r),
    \end{array}
\end{equation}
where
\begin{equation}
\label{eq:pfunc}
p(\theta,r):=ar^2-\cos\theta(\sin\theta-\cos\theta).
\end{equation}
The two branches of the critical manifold are thus given by the equilibria of \eqref{eq:critical}, namely
\begin{equation}
\label{eq:criticalmanifolds}
    \mathcal{S}^1_0:=\{ (\pi/4,r) : r\geq 0 \}, \qquad \mathcal{S}^2_0:=\left\{  (\theta,\phi_0(\theta)) : \phi_0(\theta)=\sqrt{\frac{\cos\theta (\sin\theta-\cos\theta)}{a}} \right\}.
\end{equation}
By direct calculation we can provide further information about the hyperbolicity of $\mathcal{S}^1_0$ and $\mathcal{S}^2_0$.
\begin{proposition}
    \label{PROP:hyperbolicityChart2}
    Let $h(\theta,r)$ be the vector field which defines \eqref{eq:critical}. Then, the Jacobian matrix $Dh$ satisfies that:
    \begin{enumerate}
        \item in $\mathcal{S}^1_0$, it has a simple zero eigenvalue and a negative simple eigenvalue for all $r>0$. For $r=0$, zero is a double eigenvalue.
        \item in $\mathcal{S}^2_0$, for all $\theta\not\in \{\pi/4,\arctan(2)\}$ it has a simple zero and another simple eigenvalue, which is positive when $\theta\in(\pi/4,\arctan(2))$ and negative when $\theta\in(\arctan(2),\pi/2)$. Otherwise, it has a double zero eigenvalue.
    \end{enumerate}
\end{proposition}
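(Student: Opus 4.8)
The plan is to compute the Jacobian $Dh$ of the vector field in~\eqref{eq:critical} explicitly and to exploit the product structure of its two components. Abbreviating $A(\theta)=\sin\theta(\sin\theta-\cos\theta)$ and $B(\theta)=\cos\theta(\sin\theta-\cos\theta)$, so that $h=(-A\,p,\,-rB\,p)$ with $p=ar^2-B(\theta)$, $\partial_r p=2ar$ and $\partial_\theta p=-B'(\theta)$, I would write out the four entries of $Dh$ in terms of $A,B,A',B',p$. The decisive observation is that on each branch one factor of the vector field vanishes: on $\mathcal{S}^1_0$ one has $\theta=\pi/4$, hence $A=B=0$; on $\mathcal{S}^2_0$ one has $p=0$. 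In both cases this annihilates most terms and renders the spectrum transparent.

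On $\mathcal{S}^1_0$, substituting $\theta=\pi/4$ (where $A=B=0$ and $A'=B'=1$, while $p=ar^2$) collapses $Dh$ to the lower-triangular matrix $\begin{pmatrix}-ar^2 & 0\\ -ar^3 & 0\end{pmatrix}$, whose eigenvalues are read off immediately as $-ar^2$ and $0$. For $r>0$ this is a negative simple eigenvalue together with a simple zero, while at $r=0$ the matrix vanishes identically, producing the double zero eigenvalue. This settles item~(1).

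For $\mathcal{S}^2_0$ I would impose $p=0$, which eliminates every term carrying an undifferentiated factor $p$ and leaves $\partial_\theta h_1=AB'$, $\partial_r h_1=-2arA$, $\partial_\theta h_2=rBB'$, $\partial_r h_2=-2ar^2B$. A one-line computation shows $\det Dh=0$ identically on $\mathcal{S}^2_0$, confirming the simple zero eigenvalue, so that the nontrivial eigenvalue coincides with the trace $\lambda=AB'-2ar^2B$. Using the defining relation $ar^2=B$ on $\mathcal{S}^2_0$ to substitute $2ar^2B=2B^2$, the crux is a trigonometric simplification: factoring out $\sin\theta-\cos\theta$ and collapsing the remainder with $\sin^2\theta+\cos^2\theta=1$ reduces $\lambda$ to the product
\[
\lambda=(\sin\theta-\cos\theta)(2\cos\theta-\sin\theta).
\]

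The sign analysis is then immediate. On $(\pi/4,\pi/2)$ the factor $\sin\theta-\cos\theta$ is strictly positive, so the sign of $\lambda$ is dictated by $2\cos\theta-\sin\theta$, which is positive exactly when $\tan\theta<2$ and negative when $\tan\theta>2$; hence $\lambda>0$ on $(\pi/4,\arctan 2)$ and $\lambda<0$ on $(\arctan 2,\pi/2)$. The eigenvalue vanishes precisely at $\theta=\pi/4$ and $\theta=\arctan 2$, where---since the determinant is already zero---the Jacobian has a double zero eigenvalue, giving item~(2). The only genuinely nontrivial step is the trigonometric collapse to the factored form of $\lambda$; the remainder is bookkeeping.
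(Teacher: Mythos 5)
Your proposal is correct and follows essentially the same route as the paper's proof: direct computation of $Dh$, which is lower-triangular on $\mathcal{S}^1_0$ with eigenvalues $-ar^2$ and $0$, and on $\mathcal{S}^2_0$ has vanishing determinant so that the nontrivial eigenvalue is the trace, whose sign is settled by the same trigonometric substitutions ($\sin^2\theta=1-\cos^2\theta$, $\sin^3\theta=\sin\theta-\sin\theta\cos^2\theta$) reducing the condition to $\tan\theta\gtrless 2$. Your explicit factored form $\lambda=(\sin\theta-\cos\theta)(2\cos\theta-\sin\theta)$ is a correct and slightly cleaner packaging of what the paper states only as the sign equivalence in \eqref{eq:step}, but it is the same computation.
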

\begin{proof} 
 By direct calculation, $Dh$ on $\mathcal{S}^1_0$ reads
\[
   Dh(\pi/4,r)= \left[\begin{array}{cc}
    -ar^2 & 0 \\
    -ar^3 & 0 \\
    \end{array}\right],
\]
so that the nontrivial simple eigenvalue is given by $-ar^2$, which is negative whenever $r\neq 0$.

On the other hand, on $\mathcal{S}^2_0$ the Jacobian matrix, after evaluating it at $r=\phi_0(\theta)$, is given by
\[
    Dh(\theta,\phi_0)= \left[\begin{array}{cc}
        -\sin\theta(\sin\theta-\cos\theta) \cdot\partial_\theta p&

        -\sin\theta(\sin\theta-\cos\theta)\cdot\partial_r p\\

    -\phi_0\cdot \cos\theta(\sin\theta-\cos\theta)\cdot\partial_\theta p & -\phi_0\cdot\cos\theta(\sin\theta-\cos\theta)\cdot \partial_r p
        
    \end{array}\right].
\]
Observe that, since $\det Dh(\theta,\phi_0)=0$ for all $\theta\in[\pi/4,\pi/2]$, one of the eigenvalues is zero. The potentially nontrivial eigenvalue is given by its trace, namely
\begin{align*}
tr\; Dh(\theta,\phi_0)= & -\sin\theta(\sin\theta-\cos\theta) \left[
\sin\theta(\sin\theta-\cos\theta) -\cos\theta(\sin\theta+\cos\theta)\right]\\
&-2\cos^2\theta(\sin\theta-\cos\theta)^2.    
\end{align*}

Since $tr\; Dh(\theta,\phi_0)$ has $(\sin\theta-\cos\theta)$ as a factor, it becomes zero at $\theta=\pi/4$. Otherwise,
$tr\; Dh(\theta,\phi_0)<0$ if and only if
\begin{equation}
    \label{eq:step}
    -\sin^3\theta+2\sin^2\theta\cos\theta-\sin\theta\cos^2\theta+2\cos^3\theta<0.
\end{equation}
By substituting $\sin^2\theta=1-\cos^2\theta$ and $\sin^3\theta=\sin\theta-\sin\theta\cos^2\theta$, we get that \eqref{eq:step} is equivalent to
\[
    \tan\theta>2,
\]
and the result follows.
\end{proof}

\begin{remark}
\label{RMK:foldpoint}
The geometry of the fast fibres of \eqref{eq:critical} can be obtained easily by taking $\dot{r}/\dot{\theta}$, so that
\begin{equation}
\label{eq:orbitODE}
    \frac{dr}{d\theta}=r\cot\theta.
\end{equation}
While the solutions are given as $r_{\rho}(\theta)=\rho\sin\theta$ as for \eqref{eq:layerproblem}, where $\rho\geq 0$, the dynamics are of different nature. Indeed, the graphs of $r_{\rho}$ and the critical manifold $\mathcal{S}_0^2$ may intersect depending on $\rho$. The intersections can be calculated by solving the equation
\[
    \rho\sin\theta=\sqrt{\frac{\cos\theta(\sin\theta-\cos\theta)}{a}},
\]
which is equivalent to the quadratic equation
\[
    a\rho^2\tan^2\theta-\tan\theta+1=0,
\]
whose solutions are given by
\[
    \tan{\theta^{\pm}}=\frac{1\pm \sqrt{1-4a\rho^2}}{2a\rho^2}.
\]
Note that the solutions exist if and only if $\rho\leq \rho^*:=1/(2\sqrt{a})$, and that for each point $z=(\theta,\phi_0(\theta))\in \mathcal{S}^2_0$ there is a unique $\rho$ such that the graph of $r_{\rho}(\theta)$ intersects $\mathcal{S}_0^2$ precisely at $z$. The solutions of \eqref{eq:orbitODE} constitute the local stable or unstable manifolds $W^s_{loc}(z), W^u_{loc}(z)$ accordingly, see Figure~\ref{fig:chart2}. A tangency occurs at the point $F=(\theta^*,r^*)$ with $\theta^*=\arctan(2)$ and $r^*=\phi_0(\theta^*)\equiv 1/\sqrt{5a}$, where  $\mathcal{S}^2_0$ loses hyperbolicity. As a summary, the graphs of $r_{\rho}$ constitute the stable/unstable foliation $\mathcal{F}_0$ of any compact submanifold of the respective branch $\mathcal{S}^1_0$ or $\mathcal{S}^2_0$ which does not contain $(\pi/4,0)$ or $(\theta^*,R^*)$. In the following, we will denote the fast fibration by 
\begin{align*}
    \mathcal{F}_0(\rho)= &\{(\theta,r_{\rho}(\theta)): \theta\in[\pi/4,\pi/2]\}, \\
    \mathcal{F}_0\equiv &\bigcup_{\rho\geq 0}\mathcal{F}_0(\rho).
\end{align*}
\end{remark}

\subsection{The reduced problem}
\label{SUBSEC:reduced}
In accordance with the setup in \cite{Wechselberger2020}, system \eqref{eq:MainBrusselator} can be factorised as 
    \begin{equation}
    \label{eq:factorised}
        \left(\begin{array}{c}
             \dot{\theta} \\
             \dot{r}
        \end{array} \right)=N(\theta,r) f(\theta,r) + \epsilon\; G(\theta,r;\epsilon),
    \end{equation}
where
\[
    N(\theta,r)=\left( \begin{array}{c}
         -\sin\theta  \\
          -r\cos\theta
    \end{array} \right), \qquad 
    f(\theta,r)=(\sin\theta-\cos\theta)\cdot p(\theta,r),
\]
\[
    G(\theta,r;\epsilon)= \left( \begin{array}{c}
         -r^2\cos\theta(\sin\theta-\cos\theta) +\sqrt{\epsilon}\;ar^3\cos\theta \\
          r^3\sin\theta(\sin\theta-\cos\theta) -\sqrt{\epsilon}\; ar^4\sin\theta
    \end{array} \right).
\]
Since $N\neq 0$ for all $\theta\in[\pi/4,\pi/2]$, the critical manifolds $\mathcal{S}^1_0$ and $\mathcal{S}^2_0$ coincide with the zero-level set of $f$. By considering the time rescaling $\tau_2=\epsilon\; t_2$, we obtain the equivalent system
\begin{equation}
\label{eq:reduced}
     \left(\begin{array}{c}
             \theta' \\
             r'
        \end{array} \right)=\frac{1}{\epsilon}N(\theta,r) f(\theta,r) + G(\theta,r;\epsilon),
\end{equation}
where $\cdot'=d/d\tau_2$. We analyse the reduced problem on $\mathcal{S}_0^2$ in the next proposition, recalling the definition of its parametrisation map $\phi_0$ from~\eqref{eq:criticalmanifolds}. 
\begin{proposition}
    \label{PROP:reduced_problem}
    The dynamics of the reduced problem on $\mathcal{S}^2_0 \setminus \{F\}$ is governed by the ODE
    \begin{equation}
        \label{eq:reduced_S2}  \theta'= \frac{2\phi_0^2(\theta)\cos\theta(\sin\theta-\cos\theta)^2}{2\cos\theta-\sin\theta}, \ \theta(0) \neq \arctan 2.
    \end{equation}
\end{proposition}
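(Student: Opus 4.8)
The plan is to read off the reduced vector field directly from the general-form reduction scheme of \cite{Wechselberger2020}, which was already set up in the factorisation \eqref{eq:factorised}. Since the layer problem \eqref{eq:critical} transports points along the fast fibres spanned by $N(\theta,r)$, the reduced flow on $\mathcal{S}^2_0$ is the projection of the limiting slow field $G(\theta,r;0)$ onto the tangent bundle $T\mathcal{S}^2_0=\ker Df$ \emph{along} $\mathrm{span}\,N$. Concretely, writing the slow system \eqref{eq:reduced} as $X'=\epsilon^{-1}Nf+G$ and inserting the ansatz $X=X_0+\epsilon X_1+\cdots$ with $X_0\in\mathcal{S}^2_0$, the requirement that $X_0'$ be tangent to $\{f=0\}$, i.e.\ $Df\cdot X_0'=0$, forces the rank-one projection
\begin{equation*}
X_0'=\Pi\,G:=G-\frac{Df\cdot G}{Df\cdot N}\,N,
\end{equation*}
whose $\theta$-component is the desired reduced equation. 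Thus the task reduces to evaluating $Df\cdot G$, $Df\cdot N$ and $G_1$ on $\mathcal{S}^2_0$ at $\epsilon=0$.

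The first step is to replace $Df$ by $Dp$. Since $f=(\sin\theta-\cos\theta)\,p$ and $p\equiv 0$ on $\mathcal{S}^2_0$, the product rule gives $Df=(\sin\theta-\cos\theta)\,Dp$ there; for $\theta\in(\pi/4,\pi/2]$ the factor $\sin\theta-\cos\theta$ is nonzero and cancels in the quotient $(Df\cdot G)/(Df\cdot N)$, so it may be dropped. From \eqref{eq:pfunc} one has $\partial_r p=2ar$ and $\partial_\theta p=-(\cos 2\theta+\sin 2\theta)$, while in $G(\theta,r;0)$ the $\sqrt{\epsilon}$-terms vanish, leaving $G_1=-r^2\cos\theta(\sin\theta-\cos\theta)$ and $G_2=r^3\sin\theta(\sin\theta-\cos\theta)$. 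Throughout I would use the defining relation $ar^2=\cos\theta(\sin\theta-\cos\theta)$ of $\mathcal{S}^2_0$ to eliminate $a$.

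The computation then rests on two simplifications. For the denominator, $Dp\cdot N=\sin\theta(\cos 2\theta+\sin 2\theta)-2ar^2\cos\theta$, and substituting the constraint reproduces exactly the cubic in \eqref{eq:step}; grouping terms and using $\sin^2\theta+\cos^2\theta=1$ collapses it to $Dp\cdot N=2\cos\theta-\sin\theta$. For the numerator, after eliminating $a$ one factors $r^2\cos\theta(\sin\theta-\cos\theta)$ out of $Dp\cdot G$, and the remaining bracket $(\cos 2\theta+\sin 2\theta)+2\sin\theta(\sin\theta-\cos\theta)$ simplifies to $1$, giving $Dp\cdot G=r^2\cos\theta(\sin\theta-\cos\theta)$. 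Substituting these, together with $N_1=-\sin\theta$, into $\theta'=G_1-(Dp\cdot G)/(Dp\cdot N)\,N_1$ and factoring out $r^2\cos\theta(\sin\theta-\cos\theta)$ leaves the scalar $-1+\sin\theta/(2\cos\theta-\sin\theta)=2(\sin\theta-\cos\theta)/(2\cos\theta-\sin\theta)$; with $r=\phi_0(\theta)$ this is precisely \eqref{eq:reduced_S2}.

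I expect the main obstacle to be purely bookkeeping: verifying the two trigonometric collapses (the cubic to $2\cos\theta-\sin\theta$ and the bracket to $1$) without sign errors, since everything else is a mechanical substitution. It is worth noting that the denominator $2\cos\theta-\sin\theta$ vanishes exactly at $\tan\theta=2$, i.e.\ at the fold point $F$ identified in Remark~\ref{RMK:foldpoint}; this is where the projection $\Pi$ degenerates, as the fast fibre becomes tangent to $\mathcal{S}^2_0$, and the reduced field blows up. This is precisely why the statement is restricted to $\mathcal{S}^2_0\setminus\{F\}$ and carries the condition $\theta(0)\neq\arctan 2$.
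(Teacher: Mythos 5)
Your proposal is correct and takes essentially the same route as the paper: both invoke the general-form reduction of \cite{Wechselberger2020} (Lemma 3.4 there) via the factorisation \eqref{eq:factorised}, i.e.\ the oblique projection of $G(\theta,r;0)$ onto $T\mathcal{S}_0^2$ along $\mathrm{span}\,N$, and your two trigonometric collapses check out --- your cubic for $Dp\cdot N$ is exactly the expression in \eqref{eq:step}, reduced to $2\cos\theta-\sin\theta$ by the same substitutions ($\sin^2\theta=1-\cos^2\theta$, $\sin^3\theta=\sin\theta-\sin\theta\cos^2\theta$) the paper uses. The only difference is cosmetic bookkeeping: you cancel the common factor $\sin\theta-\cos\theta$ from $Df=(\sin\theta-\cos\theta)Dp$ and eliminate $a$ via the constraint $ar^2=\cos\theta(\sin\theta-\cos\theta)$ up front, whereas the paper works with $\nabla f$ directly and simplifies at the end.
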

\begin{proof}
Due to the factorisation \eqref{eq:factorised}, by means of Lemma 3.4 in \cite{Wechselberger2020}, the reduced problem is given by
\begin{equation}
\label{eq:reduced_Wechselberger}
        \left(\begin{array}{c}
             \theta' \\
             r'
        \end{array} \right)=\left(Id-\frac{1}{\langle\nabla f,N\rangle}N \nabla f\right)G(\theta,r;0),
\end{equation}
where $Id$ denotes the identity matrix, and the equation should be restricted to $\mathcal{S}_0^i$. In particular, when evaluating in $(\theta,\phi_0(\theta))$, we obtain
\[
    \nabla f(\theta,\phi_0)=(\sin\theta-\cos\theta) \cdot \left(\sin\theta(\sin\theta-\cos\theta)-\cos\theta(\sin\theta+\cos\theta), \; 2a\phi_0
    \right),
\]
and can calculate
\[
    \left\langle\nabla f(\theta,\phi_0), N(\theta,\phi_0)\right\rangle=(\sin\theta-\cos\theta)\cdot(2\cos\theta-\sin\theta).
\]
Furthermore, we compute
\[
    N\nabla f=(\sin\theta-\cos\theta)\left[ 
    \begin{array}{cc}
    -\sin^2\theta(\sin\theta-\cos\theta)+\sin\theta\cos\theta(\sin\theta+\cos\theta) & -2a\phi_0\sin\theta \\
    -\phi_0\sin\theta\cos\theta(\sin\theta-\cos\theta)+\phi_0\cos^2\theta(\sin\theta+\cos\theta) & -2a\phi^2\cos\theta
    \end{array}\right].
\]
Considering the equation for $\theta$ in~\eqref{eq:reduced_Wechselberger} then yields
\[
    \dot{\theta}=\frac{\phi_0^2(\sin\theta-\cos\theta)}{2\cos\theta-\sin\theta}\left( 
    -\cos\theta(2\cos\theta-\sin\theta)+\sin^3\theta\cos\theta+\sin\theta\cos^3\theta
    \right).
\]
By substituting $\sin^3\theta=\sin\theta-\sin\theta\cos^2\theta$, we obtain equation \eqref{eq:reduced_S2}.
\end{proof}

    It is now easy to observe from equation \eqref{eq:reduced_S2} that the reduced flow on $\mathcal{S}_0^2$ leads to the fold point $F$ from any initial point on $\mathcal{S}_0^2 \setminus \{F, (\pi/4, 0), (\pi/2, 0)\}$.

\begin{remark}
    From \eqref{eq:reduced_Wechselberger} and since $G(\pi/4,r;0)\equiv 0$, the reduced flow on $\mathcal{S}^1_0$ vanishes. In other words, the first order approximation provides no further information on the dynamics near $\mathcal{S}_0^1$ and a time rescaling of higher order in $\epsilon$ is needed. Indeed, from \eqref{eq:MainBrusselator}, substituting $\theta=\pi/4$ and projecting the resulting vector field to $T\mathcal{S}_0^1$ yields the ODE
    \[
        \dot{r}=-\frac{\epsilon^{3/2}ar^4}{\sqrt{2}}.
    \]
    It is readily evident that the superslow flow along $\mathcal{S}^1_0$ points towards the nonhyperbolic point $P_0$, and that the time scale spent near $\mathcal{S}^1_0$ is $\Theta\left( \epsilon^{-3/2} \right)$.
\end{remark}

\begin{figure}[ht]
     \centering
      \begin{overpic}[width=.55\linewidth]{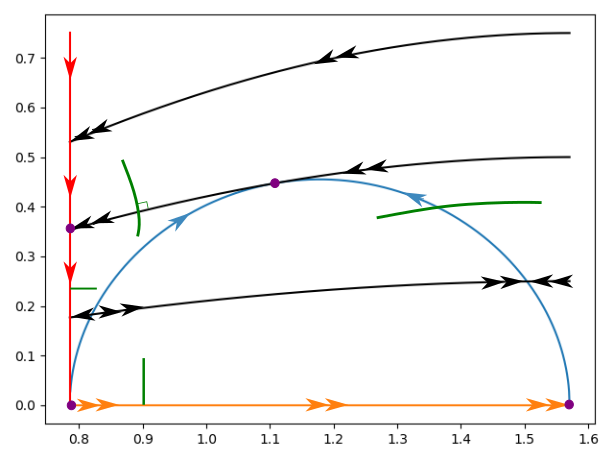}
      \put(53,-2){\scriptsize $\theta$}
    \put(25,13){\scriptsize $\Sigma_1$}
    \put(-3,38){\scriptsize $r$}
    \put(85,39){\scriptsize $\Sigma_2$}
    \put(22,45){\scriptsize $\Sigma_3$}
    \put(13,30){\scriptsize $\Sigma_4$}
    \put(-3,38){\scriptsize $r$}
    \put(12,11){\scriptsize $P_0$}
    \put(93,11){\scriptsize $P_1$}
    \put(43,48){\scriptsize $F$}
    \put(12,40){\scriptsize $Q$}
    \put(12,67){\scriptsize $\mathcal{S}_0^1$}
    \put(85,22){\scriptsize $\mathcal{S}_0^2$}
      \end{overpic}
       \hfill
       \caption[Dynamics in second chart]{Dynamics of the layer problem \eqref{eq:critical} and of the reduced problem \eqref{eq:reduced}. The branches $\mathcal{S}_0^1$ and $\mathcal{S}^2_0$ are depicted in red and blue, respectively. The fast fibres $\mathcal{F}_0(\rho)$ are portrayed in black, on which the direction of the fast flow is indicated with double arrows. On the contrary, the direction of the reduced flow is given on the critical manifolds with single arrows. Transverse sections are depicted in green, on which transition maps are defined in order to construct a Poincaré map, as done in Section~\ref{SECTION:poincareMap}.  The singular cycle consists of the curves $\hat{\sigma}_1$ connecting $P_0$ to $P_1$ (in orange), $\hat{\sigma}_2$ branch of $\mathcal{S}_0^2$ connecting $P_1$ to $F$, $\hat{\sigma}_3$ the fast fibre connecting $F$ to $Q$, and $\hat{\sigma}_4$ the branch of $\mathcal{S}_0^1$ connecting $Q$ to $P_0$.}
        \label{fig:chart2}
\end{figure}

\subsection{The singular cycle}
From the analysis in Subsections \ref{SUBSEC: layer} and \ref{SUBSEC:reduced} for the layer problem and the reduced problem of \eqref{eq:MainBrusselator} respectively, we construct a singular cycle which approximates the limit cycle for $\epsilon$ small enough. Consider the points  $P_0=(\pi/4,0)$, $P_1=(\pi/2,0)$, the fold point $F=(\theta^*,r^*)$, and $Q=\left(\frac{\pi}{4},\frac{1}{2\sqrt{2a}}\right)$. The singular cycle $\hat{\sigma}$ consists of the following curves:
\begin{itemize}
    \item[$\hat{\sigma}_1$:] the heteroclinic orbit connecting $P_0$ and $P_1$,
    \item[$\hat{\sigma}_2$:] the submanifold in $\mathcal{S}_0^2$, connecting $P_1$ to $F$,
    \item[$\hat{\sigma}_3$:] the fast fibre $\mathcal{F}_0(\rho_*)$ with $\rho_*=1/(2\sqrt{a})$, from its tangency point $F$ with $\mathcal{S}_0^2$, to the drop point $Q$ in $\mathcal{S}_0^1$,
    \item[$\hat{\sigma}_4$:] the submanifold in $\mathcal{S}_0^1$, connecting $Q$ to $P_0$.
\end{itemize}
A complete portrait of the singular cycle is presented in Figure~\ref{fig:chart2}.

\section{Construction of a Poincaré map}
\label{SECTION:poincareMap}
In the following we consider a vertical section
\begin{equation}
    \label{eq:transverseFirst}
        \Sigma_1:=\left\{ (\pi/4+\alpha_1, r) : r\in[0,\beta_1] \right\},
\end{equation}
for sufficiently small constants $\alpha_1,\beta_1$. We devote this section to showing that a first return map $\Pi^{\epsilon}:\Sigma_1\rightarrow\Sigma_1$ can be well defined as indicated in the following theorem. 
\begin{theorem}
    \label{THM:PoincareMap}
        There exist $\Sigma_1$ as in \eqref{eq:transverseFirst} and $\epsilon_0$ sufficiently small such that for every $\epsilon\in[0,\epsilon_0]$, the Poincaré map $\Pi^{\epsilon}: \Sigma_1\rightarrow\Sigma_1$  is well defined along solutions of~\eqref{eq:MainBrusselator}. Moreover, for each $\epsilon\in(0,\epsilon_0]$
        \begin{itemize}
            \item[(i)] system \eqref{eq:MainBrusselator} admits a unique attracting limit cycle $\gamma_\epsilon$ which converges in Hausdorff distance\footnote{Recall that the Hausdorff semidistance between sets is defined as
            \[ \TU{dist}_H(A,B):=\sup_{a\in A}\TU{d}(a,B)=\sup_{a\in A}\inf_{b\in B}\TU{d}(a,b),\]
            and thus the Hausdorff distance $\TU{d}_H$ is defined as $\TU{d}_H(A,B)=\max\{ \TU{dist}_H(A,B), \TU{dist}_H(B,A)\}$.} to the singular cycle $\hat{\sigma}=\hat{\sigma}_1\cup\hat{\sigma}_2\cup\hat{\sigma}_3\cup\hat{\sigma}_4$ as $\epsilon\rightarrow0$,
            \item[(ii)] the map $\Pi^{\epsilon}$ is such that for each $z\in\Sigma_1$, 
            \[
                \Pi^\epsilon(z)=\left( \pi/4+\alpha_1,\Theta\left( \epsilon^{3/2}\right) \right),
            \]
            which is a contraction of order $\mathcal{O}\left( e^{-c/\epsilon^3}\right)$, 
            \item[(iii)]it admits a unique fixed point $z_\epsilon\in\Sigma_1$ such that 
            \[z_{\epsilon}=z_0+\Theta \left( \epsilon^{3/2}\right),\] where $z_0=(\pi/4+\alpha_1,0)$, and
            \item[(iv)] the dynamics on the limit cycle $\gamma_\epsilon$ admits a time scale separation where
            \begin{itemize}
                \item the time time scale near $\hat{\sigma}_1$ is of order $\Theta(1)$,
                \item the time time scale near $\hat{\sigma}_2$ is of order $\Theta\left(\epsilon^{-1}\right)$,
                \item the time time scale near $\hat{\sigma}_3$ is of order $\Theta(1)$, and
                \item the time time scale near $\hat{\sigma}_4$ is of order $\Theta\left(\epsilon^{-3/2}\right)$.
            \end{itemize}
        \end{itemize}
\end{theorem}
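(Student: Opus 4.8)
The plan is to realise $\Pi^\epsilon$ as a composition of four transition maps
\[
\Pi^\epsilon=\Pi_{41}\circ\Pi_{34}\circ\Pi_{23}\circ\Pi_{12},
\]
where $\Pi_{12}\colon\Sigma_1\to\Sigma_2$, $\Pi_{23}\colon\Sigma_2\to\Sigma_3$, $\Pi_{34}\colon\Sigma_3\to\Sigma_4$ and $\Pi_{41}\colon\Sigma_4\to\Sigma_1$, each tracking the flow of \eqref{eq:MainBrusselator} along one branch $\hat\sigma_i$ of the singular cycle, with $\Sigma_2,\Sigma_3,\Sigma_4$ placed near $P_1$, $F$ and $Q$ respectively and $\delta$-truncated away from the degenerate points in the spirit of Definition~\ref{DEF:timescale}. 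Three of these transitions are controlled by standard GSPT tools, while the return $\Pi_{41}$ through the fully nonhyperbolic point $P_0$ (item~1 of Proposition~\ref{PROP:hyperbolicityChart2}) is the crux and requires a blow-up. I would first establish that each map is well defined for $\epsilon\in[0,\epsilon_0]$ with $\epsilon_0$ small, together with the estimates below, and then compose.

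For $\Pi_{12}$ I would use that $\hat\sigma_1=\{r=0\}$ is an invariant heteroclinic from $P_0$ to $P_1$: orbits launched from $\Sigma_1$ at small $r$ follow the fast fibration of Remark~\ref{RMK:foldpoint} close to $\{r=0\}$ and reach $\Sigma_2$ near $P_1$ in time $\Theta(1)$, which a Gronwall estimate on the interior arc $\theta\in[\pi/4+\delta,\pi/2-\delta]$ makes rigorous (both endpoints being avoided by the truncation). The map $\Pi_{23}$ follows the attracting branch $\hat\sigma_2\subset\mathcal{S}_0^2$, where by Proposition~\ref{PROP:hyperbolicityChart2} the nontrivial eigenvalue is negative for $\theta\in(\arctan 2,\pi/2)$; Fenichel's theorem yields a locally invariant slow manifold $\mathcal{S}_\epsilon^2$ attracting orbits exponentially, while the reduced flow of Proposition~\ref{PROP:reduced_problem} transports $\theta$ monotonically toward $F$ in slow time, i.e.\ in $t_2$-time $\Theta(\epsilon^{-1})$. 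The map $\Pi_{34}$ is the passage through the regular fold $F$ followed by the fast jump along $\mathcal{F}_0(\rho_*)=\hat\sigma_3$ to the drop point $Q$; here the standard estimates for a regular fold of Krupa--Szmolyan type apply, giving a transition time $\Theta(1)$ and an image exponentially close to $\mathcal{F}_0(\rho_*)$. Finally $\Pi_{41}$ descends along the superslow branch $\hat\sigma_4\subset\mathcal{S}_0^1$, on which the projected flow reads $\dot r=-\epsilon^{3/2}ar^4/\sqrt2$, so reaching a fixed neighbourhood of $P_0$ takes time $\Theta(\epsilon^{-3/2})$; a blow-up of $P_0$ then desingularises the double-zero degeneracy and produces the exit into $\Sigma_1$ at height $r=\Theta(\epsilon^{3/2})$.

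With the four maps in hand, the four assertions follow. For (i), each transition's image converges as $\epsilon\to0$ to the corresponding segment of $\hat\sigma$ --- Fenichel closeness on $\hat\sigma_2,\hat\sigma_4$ and shadowing on $\hat\sigma_1,\hat\sigma_3$ --- so the resulting closed orbit converges to $\hat\sigma$ in Hausdorff distance. For (ii), $D\Pi^\epsilon$ is the product of the four transition derivatives, and the accumulated exponential contraction is dominated by the superslow passage: integrating the fast rate $-ar^2$ against $\dot r=-\epsilon^{3/2}ar^4/\sqrt2$ along $\hat\sigma_4$ down to the exit at $r=\Theta(\epsilon^{3/2})$ gives a contraction exponent of magnitude $\Theta(\epsilon^{-3})$, whence $\|D\Pi^\epsilon\|=\mathcal{O}(e^{-c/\epsilon^3})$, while $\Pi^\epsilon(z)=(\pi/4+\alpha_1,\Theta(\epsilon^{3/2}))$ comes from the exit estimate of $\Pi_{41}$. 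For (iii), this contraction together with forward invariance of a suitable subinterval of $\Sigma_1$ yields a unique attracting fixed point $z_\epsilon$ by the Banach fixed-point theorem, located at $z_0+\Theta(\epsilon^{3/2})$. For (iv), the time scales are read off by summing the per-branch $t_2$-transition times computed above, each conforming to Definition~\ref{DEF:timescale}.

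The main obstacle is the blow-up analysis at $P_0$. Because $Dh$ has a double zero eigenvalue there, neither Fenichel theory nor the fold normal form applies, and the naive superslow estimate for the contraction exponent diverges as $r\to0$. I expect to introduce weighted blow-up coordinates resolving $P_0$, identify the hyperbolic chart dynamics connecting the incoming $\hat\sigma_4$ branch to the outgoing $\hat\sigma_1$ branch, and track a transition map through the blown-up region. This single step must simultaneously deliver the $\Theta(\epsilon^{3/2})$ exit height and regularise the contraction integral near $r=0$, so obtaining the \emph{sharp} rate $\mathcal{O}(e^{-c/\epsilon^3})$ --- rather than a merely qualitative contraction --- will be the delicate point.
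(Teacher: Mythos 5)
Your architecture coincides with the paper's: the same factorisation $\Pi^\epsilon=\Pi_{41}\circ\Pi_{34}\circ\Pi_{23}\circ\Pi_{12}$, Fenichel contraction onto the slow manifolds, a Krupa--Szmolyan passage at the regular fold $F$, the superslow drift $\dot r=-\epsilon^{3/2}ar^4/\sqrt{2}$ along $\mathcal{S}^1_\epsilon$, and a contraction-plus-fixed-point argument on $\Sigma_1$. The differences in bookkeeping are cosmetic: in the paper $\Pi_{12}$ already absorbs the heteroclinic $\hat\sigma_1$ \emph{and} the descent along $\mathcal{S}^2_\epsilon$ down to a section $\Sigma_2$ placed near $F$ (not near $P_1$), so the fold passage is the paper's $\Pi_{23}$ rather than your $\Pi_{34}$. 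One small inaccuracy: after a regular fold the image is only $\Theta(\epsilon^{2/3})$-close to the singular fibre $\mathcal{F}_0(\rho_*)$, cf.\ Lemma~\ref{LEMMA:transition2}; exponential smallness holds for the \emph{width} of the image interval, not for its offset from the fibre. Your back-of-envelope exponent for (ii) --- integrating the normal rate $-ar^2$ against the superslow drift down to exit height $\Theta(\epsilon^{3/2})$, giving $\Theta(\epsilon^{-3})$ --- is consistent with the paper's rate $\mathcal{O}(e^{-c/\epsilon^3})$, but as you note it presupposes the exit height.

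The genuine gap is precisely the step you defer: the analysis at $P_0$. Everything quantitative in (ii)--(iii) --- the exit height $r=\Theta(\epsilon^{3/2})$, hence the location of the fixed point $z_\epsilon$ and the regularisation of the contraction integral near $r=0$ --- hinges on it, and it is not an application of any off-the-shelf normal form. The paper resolves $P_0$ with the specific quasi-homogeneous weights \eqref{eq:blowUp_Second}, $\omega=\eta^6\bar\omega$, $r=\eta^3\bar r$, $\varepsilon=\eta\bar\varepsilon$ (with $\epsilon=\varepsilon^2$), and the nontrivial content sits in three charts: in the entry chart $K_1$ one finds the curve of equilibria $\mathcal{N}$ and an attracting two-dimensional centre manifold $W^c_{loc}(Q_0)$ that funnels orbits in (Propositions~\ref{PROP:centremanifold_chart1} and~\ref{LEMMA:entrance_transition}); in the scaling chart $K_2$ the passage turns out to be a \emph{second, hidden regular fold} at $(\omega_2^*,r_2^*)$, so Krupa--Szmolyan is invoked again inside the blow-up, yielding the $\Theta(\eta_2^4)$ offset (Proposition~\ref{LEMMA: transit_fold_blowup}); and the exit chart $K_3$ reduces to no standard result at all --- it needs the bespoke six-step estimate of Appendix~\ref{APPENDIX:proof_transition} (invariant region $\Xi$, two-sided exponential bounds on $r_3$, logarithmic bounds on the transit time $T_+$) to obtain $\tilde r_1=\Theta(r_3\eta_3^3)$, which after blowing down is exactly the $\Theta(\epsilon^{3/2})$ exit height and the $\mathcal{O}(\eta_{3,0}^3)$ contraction. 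Until you supply (a) the correct weights, (b) the fold structure in the scaling chart, and (c) the exit-chart estimate, parts (ii) and (iii) of the theorem remain unproved; your proposal correctly locates the difficulty but does not close it.
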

Theorem~\ref{THM:PoincareMap} implies that the self-sustained oscillations of the Brusselator are of a relaxation nature by switching between fast and slow regimes. More specifically, the limit cycle splits into fast regimes near $\hat{\sigma}_1$ and $\hat{\sigma}_3$, a slow regime near $\hat{\sigma}_2$, and a superslow one near $\hat{\sigma}_4$.

Similarly to \cite{Szmolyan09}, in order to prove Theorem~\ref{THM:PoincareMap}, we define appropriate transverse sections $\Sigma_i$ across each $\hat{\sigma}_i$, $i=1,2,3,4$, and transition maps $\Pi_{ij}^{\epsilon}\equiv\Pi_{ij}:\Sigma_i\rightarrow{\Sigma_j}$, where $\Pi_{ij}(z)=\tilde{z}$ if and only if $\tilde{z}$ is the point where the orbit starting in $z\in\Sigma_i$ intersects $\Sigma_j$ for the first time. Therefore, we dedicate this section to showing that
\[
\begin{array}{c}
    \Pi^{\epsilon}:\Sigma_1\rightarrow\Sigma_1\\
    \Pi^{\epsilon}=\Pi_{41}\circ\Pi_{34}\circ
    \Pi_{23}\circ\Pi_{12}
\end{array}
\]
is a well defined return map, which admits a unique fixed point, as described in Theorem~\ref{THM:PoincareMap}.

\subsection{The transition map \texorpdfstring{$\Pi_{12}$}{P12} -- contraction to the slow manifold \texorpdfstring{$\mathcal{S}^2_{\epsilon}$}{S2e}}
Recall from Remark~\ref{PROP:hyperbolicityChart2} that every $(\theta,\phi_0(\theta))\in\mathcal{S}^2_0$ for $\theta>\theta^*$ is partially hyperbolic, and its stable manifold is given by $r_{\rho}(\theta)=\rho\sin\theta$ for a unique $\rho\in[0,\rho^*)$. Due to the smoothness of system \eqref{eq:MainBrusselator}, standard Fenichel theory \cite[Theorem 2.4.2]{Kuehn15} guarantees that for each $\tilde{\theta}\in(\theta^*,\pi/2)$
there is a slow manifold $\mathcal{S}^2_{\epsilon}$ for each $\epsilon$ sufficiently small which is locally the graph of a function $\phi_{\epsilon}:\left[\tilde{\theta},\pi/2\right]\rightarrow \mathbb{R}$, admitting a stable foliation $\mathcal{F}_{\epsilon}$ along which there is exponential contraction towards $\mathcal{S}^2_\epsilon$. The expression of $\phi_{\epsilon}$ up to first order approximation is given in the next proposition.
\begin{proposition}
    \label{PROP:expansionSlowManifold}
        Any slow manifold $\mathcal{S}^2_{\epsilon}$ is locally the graph of a function \[\phi_{\epsilon}=\phi_0+\epsilon\phi_1+o(\epsilon),\]
        where $\phi_0$ is as in \eqref{eq:criticalmanifolds}, and
        \begin{equation}
            \label{eq:expansiontermsCritManifold}
            \phi_1(\theta)= -\frac{\phi_0(\theta)\cos\theta}{2a(2\cos\theta-\sin\theta)}.
        \end{equation}
    In particular, $\phi_{\epsilon}=\phi_0 +\Theta\left( \epsilon\right)$ for $\theta$ bounded away from $\pi/2$ and $\theta^*$. 
\end{proposition}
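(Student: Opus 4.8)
The plan is to compute the first-order correction $\phi_1$ to the critical manifold $\mathcal{S}^2_0$ by the standard asymptotic-expansion method for slow manifolds in the general (nonstandard) form of \cite{Wechselberger2020}. The slow manifold $\mathcal{S}^2_\epsilon$ is, by Fenichel theory, locally invariant for the flow of \eqref{eq:MainBrusselator}, so I would posit the ansatz $\phi_\epsilon(\theta)=\phi_0(\theta)+\epsilon\phi_1(\theta)+o(\epsilon)$ and impose the invariance condition, exploiting the fact that on the invariant graph $r=\phi_\epsilon(\theta)$ the vector field must be tangent, i.e.\ $\dot r = \phi_\epsilon'(\theta)\,\dot\theta$.

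First I would write out the invariance equation using \eqref{eq:MainBrusselator}. Since $f(\theta,\phi_0)=0$ by definition of $\mathcal{S}^2_0$, the leading-order term $N(\theta,r)f(\theta,r)$ vanishes at $\epsilon=0$ on the critical manifold, and the $O(\epsilon)$ balance couples the linearization of $Nf$ transverse to $\mathcal{S}^2_0$ against the perturbation $G(\theta,\phi_0;0)$. Concretely, substituting $r=\phi_0+\epsilon\phi_1$ into both components of \eqref{eq:MainBrusselator}, expanding to first order in $\epsilon$, and eliminating the common factor of $\dot\theta$ via the tangency relation $\dot r=\phi_\epsilon'\dot\theta$ produces a single linear algebraic equation for $\phi_1(\theta)$. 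The crucial quantities here are $\partial_r f(\theta,\phi_0)$ and the components of $G(\theta,\phi_0;0)$, which were already assembled in the proof of Proposition~\ref{PROP:reduced_problem}; in particular the combination $\langle\nabla f,N\rangle=(\sin\theta-\cos\theta)(2\cos\theta-\sin\theta)$ reappears in the denominator, which explains the $(2\cos\theta-\sin\theta)$ factor in \eqref{eq:expansiontermsCritManifold} and confirms why the expansion degenerates precisely at the fold $\theta^*=\arctan 2$.

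Solving that linear equation isolates $\phi_1(\theta)$; the expected main obstacle is purely computational bookkeeping, namely carefully tracking which terms in $G$ contribute at order $\epsilon$ versus order $\epsilon^{3/2}$ (note that $G$ itself contains a $\sqrt\epsilon$-dependence through the $ar^3\cos\theta$ and $ar^4\sin\theta$ terms, so those drop out of the first-order correction and only the $\epsilon$-independent parts of $G$ matter). After simplification using trigonometric identities of the same type as in Proposition~\ref{PROP:reduced_problem} (for instance $\sin^3\theta=\sin\theta-\sin\theta\cos^2\theta$), the right-hand side should collapse to the stated form $\phi_1(\theta)=-\phi_0(\theta)\cos\theta/[2a(2\cos\theta-\sin\theta)]$.

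Finally, for the last assertion I would observe that away from $\theta=\pi/2$ and from the fold $\theta^*$, both $\phi_0(\theta)$ and the denominator $2\cos\theta-\sin\theta$ are bounded and bounded away from zero, while $\cos\theta$ stays positive and bounded below, so $\phi_1(\theta)=\Theta(1)$ uniformly on such a compact $\theta$-interval. Hence $\phi_\epsilon-\phi_0=\epsilon\phi_1+o(\epsilon)=\Theta(\epsilon)$ there, which is exactly the claimed order. The smoothness and uniformity of the $o(\epsilon)$ remainder is guaranteed by Fenichel's theorem as invoked just before the proposition.
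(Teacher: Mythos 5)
Your proposal is correct and follows essentially the same route as the paper: the appendix proof also posits the ansatz $\phi_\epsilon=\phi_0+\epsilon\phi_1+o(\epsilon)$, imposes the invariance (tangency) condition, and matches powers, with the transverse linearization producing exactly the $(2\cos\theta-\sin\theta)$ denominator after the same trigonometric substitutions. The only cosmetic difference is bookkeeping of the half-powers: the paper substitutes $\epsilon=\varepsilon^2$ so the vector field is a genuine power series in $\varepsilon$ (finding the $\varepsilon^1$-coefficient vanishes and the stated $\phi_1$ arises as the $\varepsilon^2$-coefficient), whereas you expand directly in $\epsilon$ and correctly observe that the $\sqrt{\epsilon}$-dependent terms of $G$ only enter at order $\epsilon^{3/2}$ and hence do not affect the first-order correction.
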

\begin{proof}See Appendix~\ref{SEC:expansions}. \end{proof}
\begin{remark}
    \label{RMK:centremanifold_unique}
    Any slow manifold $\mathcal{S}^2_{\epsilon}$ is in fact a \textit{centre manifold} $W^c_{\epsilon}(P_1)$ based on $P_1=(\pi/2,0)$, since $P_1$ is partially hyperbolic for every $\epsilon>0$. Indeed, let $H(\theta,r)$ be the vector field defining \eqref{eq:MainBrusselator}. Then, the Jacobian matrix $DH$ at $P_1$ is
    \[
        DH(\pi/2,0)=\left[ 
        \begin{array}{cc}
            -1 & 0 \\
            0 & 0
        \end{array}
        \right].
    \]
    The existence of $W^c_\epsilon$ is guaranteed by the \textit{centre manifold theorem}, see for instance \cite[Theorem 5.1]{kuznetsov04} or \cite[Theorem 5.1]{Turaev98}.

    While centre manifolds are in general nonunique, in this case they are uniquely defined for $r\geq 0$, which is contained in our region of interest.
\end{remark}

\begin{remark}
    \label{RMK:timescale1}
Proposition~\ref{PROP:expansionSlowManifold} implies that the time scale near $\mathcal{S}^2_{\epsilon}$ is of order $\Theta(\epsilon^{-1})$, see Appendix~\ref{SEC:expansions}.
\end{remark}

For defining the transversal section $\Sigma_2$, we make use of the fast fibres $\mathcal{F}_0(\rho)$ setting
\begin{equation}
    \label{eq:transverseSecond}
        \Sigma_2:=\left\{ (\theta, \beta_2\sin\theta)) : \theta\in\left[\theta^*+\frac{\alpha_2}{2},\theta^*+\frac{3\alpha_2}{2}\right]
        \right\},
\end{equation}
where
\[
    \beta_2=\frac{\phi_0(\theta^*+\alpha_2)}{\sin(\theta^*+\alpha_2)}
\]
and $\alpha_2>0$ is sufficiently small, see Figure~\ref{fig:fold}. In essence, $\Sigma_2$ is a small segment of $\mathcal{F}_0(\beta_2)$ around $\theta^*+\alpha_2$, such that $\phi_0(\theta^*+\alpha_2)=r_{\beta_2}(\theta^*+\alpha_2)$. Note that $\alpha_2$ being small implies $\beta_2\approx\rho_*$.

Since the stable foliation $\mathcal{F}_0$ is transversal to $\Sigma_1$, this remains true for $\mathcal{F}_\epsilon$ for all $\epsilon$ sufficiently small, and thus the orbit starting in a point $(\pi/4+\alpha_1,r)\in\Sigma_1$ is attracted exponentially fast to $\mathcal{S}^2_{\epsilon}$. The transition map $\Pi_{12}$ can be extended to $z_0=(\pi/4+\alpha_1,0)$ by mapping this point to $\mathcal{S}^2_{\epsilon}\cap \Sigma_2=\left(\theta^*+\alpha_2, \phi_0(\theta^*+\alpha_2) \right)+\Theta(\epsilon)$. As a summary we have the following lemma:
\begin{lemma}
\label{LEMMA:Transition1}
    For some $\alpha_1,\alpha_2, \beta_1$ small enough the transition map $\Pi_{12}:\Sigma_1\rightarrow\Sigma_2$ is well defined for every $\epsilon$ sufficiently small, where 
    \[\Pi_{12}(z_1)=\left(\theta^*+\alpha_2, \phi_0(\theta^*+\alpha_2) \right)+\Theta(\epsilon),\]
    for all $z_1\in\Sigma_1$.
    Moreover, the transition map $\Pi_{12}$ is a contraction with a contraction rate of order $\mathcal{O}(e^{-c_1/\epsilon})$ for some $c_1>0$. 
\end{lemma}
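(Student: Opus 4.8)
The plan is to exploit the attracting slow manifold $\mathcal{S}^2_\epsilon$ and its exponentially contracting stable foliation, already furnished by Fenichel theory in Proposition~\ref{PROP:expansionSlowManifold}, and to track the forward orbit of an arbitrary point of $\Sigma_1$ first along a fast fibre onto $\mathcal{S}^2_\epsilon$ and then along the reduced flow down to $\Sigma_2$. First I would fix a compact subinterval $[\theta^*+\alpha_2,\pi/2]$ bounded away from the fold $\theta^*$, on which $\mathcal{S}^2_0$ is normally hyperbolic and attracting by Proposition~\ref{PROP:hyperbolicityChart2}(2). Fenichel's theorem then yields $\mathcal{S}^2_\epsilon=\{r=\phi_\epsilon(\theta)\}$ together with a stable foliation $\mathcal{F}_\epsilon$, which is an $\mathcal{O}(\epsilon)$ perturbation of the fast fibration $\mathcal{F}_0(\rho)=\{r=\rho\sin\theta\}$ of Remark~\ref{RMK:foldpoint} and along which there is uniform exponential contraction toward $\mathcal{S}^2_\epsilon$ at some rate $e^{\lambda t_2}$ with $\lambda<0$.

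Next I would establish well-definedness and the image formula. The vertical section $\Sigma_1=\{\theta=\pi/4+\alpha_1\}$ is transverse to each fibre $\mathcal{F}_0(\rho)$, since a point $(\pi/4+\alpha_1,r)$ lies on the unique fibre with $\rho=r/\sin(\pi/4+\alpha_1)$; this transversality persists for $\mathcal{F}_\epsilon$ and $\epsilon$ small. From the sign of the layer field \eqref{eq:critical}, below $\mathcal{S}^2_0$ one has $\dot\theta>0$, so the orbit of any $z_1\in\Sigma_1$ moves with increasing $\theta$ along its fibre, is attracted exponentially onto $\mathcal{S}^2_\epsilon$, and is thereafter carried by the reduced flow, which by Proposition~\ref{PROP:reduced_problem} decreases $\theta$ monotonically toward $F$. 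Hence the orbit meets $\Sigma_2$ at $\theta^*+\alpha_2$ transversally in finite time. Continuous dependence on initial data and parameters together with this transversality makes $\Pi_{12}$ well defined and smooth for $\epsilon$ small, with image on $\mathcal{S}^2_\epsilon\cap\Sigma_2=(\theta^*+\alpha_2,\phi_0(\theta^*+\alpha_2))+\Theta(\epsilon)$ by Proposition~\ref{PROP:expansionSlowManifold}. The degenerate endpoint $z_0=(\pi/4+\alpha_1,0)$ is then incorporated by continuity: its orbit runs along the heteroclinic $\{r=0\}$ to $P_1$ and, for $\epsilon>0$, is captured by $\mathcal{S}^2_\epsilon$, so the same image formula extends to $z_0$.

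For the contraction rate I would estimate the derivative transverse to $\mathcal{S}^2_\epsilon$. The passage along $\mathcal{S}^2_\epsilon$ from entry to $\Sigma_2$ takes fast time $t_2=\Theta(\epsilon^{-1})$ by Remark~\ref{RMK:timescale1}, during which the normal eigenvalue $\lambda<0$ remains bounded away from zero away from the fold. Fenichel's foliation estimate then shows that distances transverse to $\mathcal{S}^2_\epsilon$ are contracted by a factor $e^{\lambda t_2}=\mathcal{O}(e^{-c_1/\epsilon})$; since $\Sigma_1$ is transverse to $\mathcal{F}_\epsilon$, this propagates to $\|D\Pi_{12}\|=\mathcal{O}(e^{-c_1/\epsilon})$, so that $\Pi_{12}$ is a contraction of the stated rate and all of $\Sigma_1$ is squeezed to within $\Theta(\epsilon)$ of $(\theta^*+\alpha_2,\phi_0(\theta^*+\alpha_2))$.

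The main obstacle is the passage near $P_1=(\pi/2,0)$, where $\mathcal{S}^2_0$ meets the invariant line $\{r=0\}$ and $\phi_0\to0$, so that the Fenichel foliation degenerates and the slow manifold falls outside the standard compact-interval theory. I would resolve this through the centre-manifold description of Remark~\ref{RMK:centremanifold_unique}: since $P_1$ is partially hyperbolic with eigenvalue $-1$, the manifold $\mathcal{S}^2_\epsilon$ coincides with the unique centre manifold $W^c_\epsilon(P_1)$ for $r\geq0$, and the attraction onto it stays uniformly exponential as $r\to0$. Patching this local estimate to the Fenichel estimate on the compact part away from $P_1$ delivers both the $\Theta(\epsilon)$ control of the image and the $\mathcal{O}(e^{-c_1/\epsilon})$ contraction uniformly along the transition.
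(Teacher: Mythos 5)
Your proposal is correct and follows essentially the same route as the paper: Fenichel theory on a compact piece of $\mathcal{S}^2_0$ away from the fold, transversality of $\Sigma_1$ to the (persisting) stable foliation $\mathcal{F}_\epsilon$ with exponential contraction over a passage time $\Theta(\epsilon^{-1})$, the image formula from Proposition~\ref{PROP:expansionSlowManifold}, and the extension to $z_0=(\pi/4+\alpha_1,0)$ via the unique centre manifold $W^c_\epsilon(P_1)$ of Remark~\ref{RMK:centremanifold_unique} -- exactly the ingredients the paper invokes in the paragraph preceding the lemma. In fact your write-up is more explicit than the paper's sketch, in particular in isolating the degeneracy at $P_1$ (where the invariant line $\{r=0\}$ forces $\Pi_{12}$ to be defined at $z_0$ only by continuous extension) and in patching the local centre-manifold estimate to the compact Fenichel estimate.
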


\subsection{The transition map \texorpdfstring{$\Pi_{23}$}{P23} -- passage through the fold}
\label{SEC:PI_23}
Recall from Remark~\ref{RMK:foldpoint} that normal hyperbolicity of the critical manifold $\mathcal{S}^2_0$ is lost at the point $F=(\theta^*,r^*)$, and the passage through this fold point is to be analysed. 

Let us consider $\alpha_3>0$ sufficiently small, and the curvilinear segment $\Sigma_3$,
which 
\begin{enumerate}
\item passes through $(\theta^*-\alpha_3,r_{\rho_*}(\theta^*-\alpha_3))\in\mathcal{F}_0(\rho_*)$,
\item is perpendicular to each fibre $\mathcal{F}(\rho)$ with $\rho\approx\rho_*$, and
\item lies above and away from $\mathcal{S}_0^2$.
\end{enumerate}
  For a clear depiction of $\Sigma_3$, see Figure~\ref{fig:fold}. A direct application of classical transitions through folds is possible due to Proposition~\ref{PROP:reduced_problem}, as given in the next statement.

\begin{lemma}
\label{LEMMA:transition2}
    For each $\Sigma_3$ as above, there exists $\Sigma_2$ as in (\ref{eq:transverseSecond}) such that the transition map $\Pi_{23}:\Sigma_2\rightarrow\Sigma_3$ is well defined for every $\epsilon$ sufficiently small. The slow manifold $\mathcal{S}_\epsilon^1$ passes through a point $(\theta^*-\alpha_3,r_{\rho_*}(\theta^*-\alpha_3))+\Theta(\epsilon^{2/3})$.
    Moreover, the map $\Pi_{23}$ is a contraction with a contraction rate of order $\mathcal{O}(e^{-c_2/\epsilon})$, for some $c_2>0$, and thus for every $z_2\in\Sigma_2$,
    \[
        \Pi_{23}(z_2)=(\theta^*-\alpha_3, r_{\rho_*}(\theta^*-\alpha_3))+\Theta \left( \epsilon^{2/3} \right).
    \]
\end{lemma}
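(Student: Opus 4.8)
The plan is to recognise Lemma~\ref{LEMMA:transition2} as an instance of the classical analysis of a generic (nondegenerate) fold point in a planar fast-slow system, as developed by Krupa and Szmolyan \cite{Krupa01}, and to verify that the Brusselator at $F$ satisfies the standard nondegeneracy hypotheses. First I would move into a local chart near $F=(\theta^*,r^*)$ adapted to the fold geometry established in Remark~\ref{RMK:foldpoint}: the critical manifold $\mathcal{S}_0^2$ together with the fast fibration $\mathcal{F}_0(\rho)$ provides a coordinate frame in which $F$ is the tangency point where the fibre $\mathcal{F}_0(\rho_*)$ touches $\mathcal{S}_0^2$. I would straighten coordinates so that locally the fast direction is normal to $\mathcal{S}_0^2$ and the slow direction is tangent, reducing \eqref{eq:MainBrusselator} to the normal form $\dot u = -v + u^2 + \mathcal O(\cdots)$, $\dot v = \epsilon(\cdots)$ of a regular fold. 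The nondegeneracy conditions I need are the quadratic tangency (nonvanishing second-order contact of the fibre with $\mathcal{S}_0^2$ at $F$) and the transversality of the reduced flow (nonvanishing slow drift toward the fold); the first follows from the tangency computation in Remark~\ref{RMK:foldpoint}, and the second from Proposition~\ref{PROP:reduced_problem}, since the reduced flow \eqref{eq:reduced_S2} pushes every point of $\mathcal{S}_0^2\setminus\{F\}$ toward $F$ and does not vanish in a punctured neighbourhood.

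Once the fold is certified nondegenerate, I would invoke the Krupa--Szmolyan fold theorem \cite{Krupa01} directly. That theorem describes the passage map from an incoming section transverse to the attracting slow manifold across the fold to an outgoing section (the exit along the fast fibre). Its two key quantitative outputs are exactly what the statement requires: the tracked slow manifold $\mathcal{S}^1_\epsilon$ exits the fold neighbourhood at a point displaced by $\Theta(\epsilon^{2/3})$ from the singular drop point on the fibre $\mathcal{F}_0(\rho_*)$, and the passage contracts transverse distances exponentially, with rate $\mathcal O(e^{-c_2/\epsilon})$. I would set $\Sigma_3$ as the outgoing section through $(\theta^*-\alpha_3, r_{\rho_*}(\theta^*-\alpha_3))$ perpendicular to the nearby fibres, as stipulated, and choose $\Sigma_2$ as in \eqref{eq:transverseSecond} to be the matching incoming section on the attracting branch $\theta>\theta^*$. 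The $\epsilon^{2/3}$ scaling is the hallmark of the fold: it arises from the blow-up rescaling $u=\epsilon^{1/3}\bar u$, $v=\epsilon^{2/3}\bar v$, $t=\epsilon^{-1/3}\bar t$ internal to the fold analysis, and the base point of $\mathcal{S}^1_\epsilon$ on $\Sigma_3$ inherits precisely this correction. The exponential contraction comes from the fact that incoming trajectories are squeezed onto the one-dimensional slow manifold before reaching the fold, so the transverse spread is dominated by the Fenichel contraction along $\mathcal{S}^2_\epsilon$, giving $\Pi_{23}(z_2)=(\theta^*-\alpha_3,r_{\rho_*}(\theta^*-\alpha_3))+\Theta(\epsilon^{2/3})$ uniformly in $z_2\in\Sigma_2$.

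The main obstacle I anticipate is not the invocation of \cite{Krupa01} but the bookkeeping required to match the paper's coordinate conventions to the theorem's hypotheses. In particular, \eqref{eq:MainBrusselator} is a system in \emph{nonstandard form}: the fast and slow directions are not aligned with the coordinate axes $(\theta,r)$ but are given by $N(\theta,r)$ and by the tangent to $\mathcal{S}_0^2$. I must therefore perform an explicit local change of variables that brings the factorised field \eqref{eq:factorised} into the standard fold normal form, and check that the $\Theta(\epsilon)$ correction to $\mathcal{S}_\epsilon^2$ from Proposition~\ref{PROP:expansionSlowManifold} does not interfere with the $\epsilon^{2/3}$ scaling near $F$ (it does not, since $\epsilon^{2/3}$ dominates $\epsilon$ as $\epsilon\to 0$). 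A secondary subtlety is that the presence of $\sqrt{\epsilon}$ terms in $G$ means the perturbation is smooth in $\sqrt{\epsilon}$ rather than in $\epsilon$; I would note that the fold analysis is insensitive to this, since $\sqrt{\epsilon}\,ar^3\cos\theta$ and $\sqrt{\epsilon}\,ar^4\sin\theta$ are higher-order corrections to $G(\theta,r;0)$, which is the only part entering the reduced flow and hence the leading-order fold dynamics. With these identifications in place, the statement follows by direct citation.
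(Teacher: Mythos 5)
Your proposal is correct and, at its core, follows the same strategy as the paper: verify that $F$ is a nondegenerate fold (quadratic tangency between $\mathcal{F}_0(\rho_*)$ and $\mathcal{S}_0^2$, per Remark~\ref{RMK:foldpoint}) and that the reduced flow is nonvanishing and directed toward $F$ (Proposition~\ref{PROP:reduced_problem}), then cite classical fold-passage theory for the $\Theta(\epsilon^{2/3})$ displacement and the $\mathcal{O}(e^{-c_2/\epsilon})$ contraction. The difference lies in which theorem you invoke and what that costs you. You route through Krupa--Szmolyan \cite{Krupa01}, which is formulated for systems in \emph{standard} form, and therefore you must carry out the explicit straightening of the fast fibration (e.g.\ passing to coordinates in which $\rho$, labelling the fibres $r_\rho(\theta)=\rho\sin\theta$, is the slow variable and position along the fibre is fast) before the theorem applies --- the very ``bookkeeping obstacle'' you flag as the main work. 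The paper instead cites Wechselberger's fold results for slow-fast systems in general (nonstandard) form \cite[Lemma 4.3 and Theorem 4.2]{Wechselberger2020}, which apply directly to the factorised field \eqref{eq:factorised} and make the coordinate change unnecessary; this is why the paper's proof is three lines. (The paper does use \cite{Krupa01} later, in the scaling chart $K_2$, where the system \eqref{blowchart_tran} genuinely is in standard form.) Your two auxiliary observations --- that the $\Theta(\epsilon)$ correction to $\mathcal{S}^2_\epsilon$ from Proposition~\ref{PROP:expansionSlowManifold} is dominated by $\epsilon^{2/3}$, and that smoothness in $\sqrt{\epsilon}$ rather than $\epsilon$ is harmless because the $\sqrt{\epsilon}$ terms in $G$ are higher order --- are sound and would indeed need to be stated in your version, whereas the paper's choice of reference absorbs them silently. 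One cosmetic point: the slow manifold tracked through the fold is the extension of $\mathcal{S}^2_\epsilon$ (the attracting branch entering from $\theta>\theta^*$, cf.\ Figure~\ref{fig:fold}); the statement's ``$\mathcal{S}_\epsilon^1$'' appears to be a typo that your write-up inherits but does not depend on.
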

\begin{proof}
    Recall that the fold point $F=(\theta^*,r^*)$ is given by the tangency between the fibre $\mathcal{F}_0(\rho_*)$ and $\mathcal{S}^2_0$, which are given by the graphs of $r_{\rho_*}(\theta)=\rho_*\sin\theta$ and $\phi_0(\theta)$, respectively. This tangency is of order 1, that is
    \[
        r^*=r_{\rho_*}(\theta^*)=\phi_0(\theta^*), \quad r_{\rho_*}'(\theta^*)=\phi'_0(\theta^*), \quad r_{\rho_*}''(\theta^*)\neq\phi''_0(\theta^*).
    \]
    By Proposition~\ref{PROP:reduced_problem}, the reduced flow points towards $F$, and the result follows from
    \cite[Lemma 4.3 and Theorem 4.2]{Wechselberger2020}.
\end{proof}

\begin{figure}[ht]
     \centering
      \begin{overpic}[width=.5\linewidth]{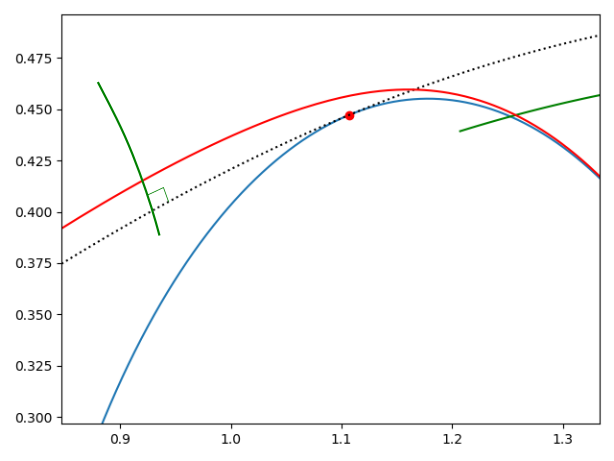}
    \put(53,-2){$\theta$}
    \put(-3,35){$r$}
    \put(15,50){\scriptsize $\Sigma_3$}
    \put(40,57){\scriptsize $\mathcal{S}_{\epsilon}^2$}
    \put(55,52){\scriptsize $F$}
    \put(75,50){\scriptsize $\Sigma_2$}
    \put(79,68){\scriptsize $\mathcal{F}_0(\rho_*)$}
    \put(30,29){\scriptsize $\mathcal{S}_0^2$}
      \end{overpic}
       \hfill
       \caption[Dynamics near the fold]{The phase portrait near the fold point $F$. The segments in green are the sections $\Sigma_2$ (on the right) and $\Sigma_3$ (on the left). In solid blue the graph of $\phi_0$ is depicted. In
       dotted lines the fast fibre $\mathcal{F}_0(\rho_*)$ is presented as a reference. In red solid line the extension of the slow manifold $\mathcal{S}^2_{\epsilon}$ is portrayed. The deviation from the fast fibre $\mathcal{F}_0({\rho_*})$ is of order $\Theta\left( \epsilon^{2/3}\right)$.}
        \label{fig:fold}
\end{figure}

\subsection{The transition map \texorpdfstring{$\Pi_{34}$}{P34}}

Let us consider a transverse section
\begin{equation}
    \label{eq:Section4}
        \Sigma_4=\left\{ (\theta,\beta_4) : \theta\in\left[\pi/4,\pi/4+\alpha_4\right] \right\},
\end{equation}
for some $\alpha_4,\beta_4$ sufficiently small. The critical manifold $\mathcal{S}^1_0$ in system \eqref{eq:wr_system} is given simply by $\{\theta=\pi/4\}$, and due to Proposition~\ref{PROP:hyperbolicityChart2} any compact submanifold bounded away from $(0,0)$ is normally hyperbolic. By Fenichel's theorem, for $\epsilon>0$ sufficiently small, the critical manifold persists smoothly as stated in the next proposition. 
\begin{proposition}
    \label{PROP:S1_approximation}
    Any slow manifold $\mathcal{S}^1_{\epsilon}$ is locally the graph of a function \[\theta_{\epsilon}(r)=\frac{\pi}{4}+\frac{r}{\sqrt{2}}\epsilon^{3/2}+
    o\left(\epsilon^{3/2}\right).\]
    In particular, $\theta_{\epsilon}(r)=\pi/4 + \Theta(\epsilon^{3/2})$.
\end{proposition}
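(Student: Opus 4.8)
The plan is to read the expansion straight off the graph-invariance equation for $\mathcal{S}^1_\epsilon$, using the normal hyperbolicity supplied by Proposition~\ref{PROP:hyperbolicityChart2}. First I would fix a compact interval $[\underline r,\overline r]\subset(0,\infty)$ bounded away from the nonhyperbolic point $P_0=(\pi/4,0)$; on it Proposition~\ref{PROP:hyperbolicityChart2}(1) gives normal hyperbolicity with nontrivial eigenvalue $-ar^2<0$, so Fenichel's theorem produces the slow manifold as a graph $\theta=\theta_\epsilon(r)$ with $\theta_0\equiv\pi/4$. Since the functions $g_1,g_2$ in \eqref{eq:MainBrusselator} depend on $\sqrt\epsilon$, the natural small parameter is $\delta:=\sqrt\epsilon$, and I would seek an asymptotic expansion $\theta_\epsilon(r)=\pi/4+\sum_{k\ge1}\delta^k\psi_k(r)$.

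Next I would impose invariance, $H_1(\theta_\epsilon(r),r)=\theta_\epsilon'(r)\,H_2(\theta_\epsilon(r),r)$, where $H=(H_1,H_2)$ is the vector field defining \eqref{eq:MainBrusselator}. The crucial structural fact is that $\sin\theta-\cos\theta=\sqrt2\sin\psi$ vanishes at $\theta=\pi/4$, so every term of $H_1,H_2$ carries a factor $\psi:=\theta_\epsilon(r)-\pi/4$ or $\delta$. Taylor-expanding the trigonometric factors about $\pi/4$ (with $\sin\theta,\cos\theta\to1/\sqrt2$ and $\sin\theta\cos\theta\to1/2$), one finds $H_1=-ar^2\psi+\frac{ar^3}{\sqrt2}\delta^3+(\text{h.o.t.})$ and $H_2=-ar^3\psi-\frac{ar^4}{\sqrt2}\delta^3+(\text{h.o.t.})$, where the $\delta^3$ forcing comes precisely from the $\sqrt\epsilon\,ar^3\cos\theta$ and $-\sqrt\epsilon\,ar^4\sin\theta$ contributions to $g_1,g_2$. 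Because $\theta_\epsilon'(r)=O(\psi)$ and $H_2=O(\psi)+O(\delta^3)$, the right-hand side $\theta_\epsilon'H_2$ is of strictly higher order than the $\delta^3$-forcing, so the dominant balance lives entirely in $H_1$.

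Finally I would match powers of $\delta$. At orders $\delta$ and $\delta^2$ there is no forcing and the leading balance $-ar^2\psi_k=0$ forces $\psi_1=\psi_2=0$ — here the nonvanishing of the normal eigenvalue $-ar^2$, i.e.\ normal hyperbolicity, is used in an essential way. At order $\delta^3$ the balance becomes $-ar^2\psi_3(r)+\frac{ar^3}{\sqrt2}=0$, whence $\psi_3(r)=r/\sqrt2$, giving $\theta_\epsilon(r)=\pi/4+\frac{r}{\sqrt2}\epsilon^{3/2}+o(\epsilon^{3/2})$ and hence the $\Theta(\epsilon^{3/2})$ claim uniformly on the fixed compact $r$-interval. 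I expect the main obstacle to be the bookkeeping in $\delta=\sqrt\epsilon$ rather than $\epsilon$, together with verifying the double cancellation $\psi_1=\psi_2=0$ so that the first nontrivial correction genuinely appears at $\epsilon^{3/2}$; a secondary issue is that hyperbolicity degenerates as $r\to0$, so the whole argument must stay on an interval bounded away from $P_0$. As a consistency check, substituting $\theta=\pi/4$ and projecting onto $T\mathcal{S}^1_0$ should reproduce the superslow law $\dot r=-\frac{ar^4}{\sqrt2}\epsilon^{3/2}$ noted in the preceding remark.
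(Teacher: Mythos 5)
Your proposal is correct and is essentially the paper's own argument: the paper likewise shifts to $\omega=\theta-\pi/4$, takes $\varepsilon=\sqrt{\epsilon}$ as the small parameter, writes the slow manifold as a graph $\omega_\varepsilon(r)=\varepsilon\omega_1(r)+\varepsilon^2\omega_2(r)+\varepsilon^3\omega_3(r)+\mathcal{O}(\varepsilon^4)$, and matches powers in the invariance relation (via the general formulas \eqref{eq:phi1}--\eqref{eq:phi2} of Appendix~\ref{SEC:expansions}) to find $\omega_1=\omega_2=0$ and $\omega_3(r)=r/\sqrt{2}$, exactly your double cancellation and third-order balance $-ar^2\psi_3+\frac{ar^3}{\sqrt{2}}=0$. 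Your direct dominant-balance computation of $H_1=\theta_\epsilon'(r)\,H_2$ is the same calculation carried out by hand instead of through the paper's precomputed expansion coefficients, and your caveats (staying on a compact $r$-interval away from $P_0$, and the consistency check against the superslow law $\dot r=-\epsilon^{3/2}ar^4/\sqrt{2}$) match the paper's setup.
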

\begin{proof}
    See Appendix~\ref{APPENDIX:approx_S1}.
\end{proof}
\begin{remark}
\label{RMK:timescale2}
    Similarly to Remark~\ref{RMK:timescale1}, Proposition~\ref{PROP:S1_approximation} implies that the time scale near $\mathcal{S}^1_{\epsilon}$ is of order $\Theta\left(\epsilon^{-3/2}\right)$. This, in comparison to the case near $\mathcal{S}^2_{\epsilon}$, is in fact a superslow regime, see Remark~\ref{RMK:timescale1}.
\end{remark}

The third transition map will be constructed similarly to $\Pi_{12}$ and is established in the next lemma.
\begin{lemma}
    \label{LEMMA:transition3}
        For each $\Sigma_4$ as in \eqref{eq:Section4} there exists a section $\Sigma_3$ such that the transition map $\Pi_{34}:\Sigma_3\rightarrow\Sigma_4$ is well defined for every $\epsilon$ sufficiently small. The point $\mathcal{F}(\rho_*)\cap \Sigma_3$ is mapped to $(\pi/4,\beta_4)+\Theta(\epsilon^{3/2})$. Moreover, the transition map $\Pi_{34}$ is a contraction with a contraction rate of order $\mathcal{O}\left(e^{-c_3/\epsilon}\right)$ for some $c_3>0$.
\end{lemma}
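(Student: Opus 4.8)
The plan is to realise $\Pi_{34}$ as a passage across the attracting slow manifold $\mathcal{S}^1_\epsilon$, in complete analogy with $\Pi_{12}$, but now anchored at the branch $\mathcal{S}^1_0=\{\theta=\pi/4\}$. First I would record that, by Proposition~\ref{PROP:hyperbolicityChart2}(1), every point $(\pi/4,r)$ with $r>0$ is attracting and normally hyperbolic for the layer problem~\eqref{eq:critical}, with nontrivial eigenvalue $-ar^2<0$; in particular the compact arc of $\mathcal{S}^1_0$ joining $Q=(\pi/4,\tfrac{1}{2\sqrt{2a}})$ to the height $\beta_4$ of $\Sigma_4$ is normally hyperbolic and bounded away from the degenerate point $P_0=(\pi/4,0)$. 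Fenichel's theorem (as packaged in Proposition~\ref{PROP:S1_approximation}) then yields, for $\epsilon$ small, a slow manifold $\mathcal{S}^1_\epsilon$, locally the graph $\theta=\theta_\epsilon(r)=\pi/4+\tfrac{r}{\sqrt2}\epsilon^{3/2}+o(\epsilon^{3/2})$, carrying an attracting stable foliation $\mathcal{F}_\epsilon$ that is an $\mathcal{O}(\epsilon)$-perturbation of the fast fibration $\mathcal{F}_0$ of Remark~\ref{RMK:foldpoint}.

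Next I would identify the entry mechanism. Since $\rho_*=\rho^*=1/(2\sqrt a)$, the fibre $\mathcal{F}_0(\rho_*)$ meets $\mathcal{S}^1_0$ exactly at $Q$ and is its local stable fibre $W^s_{\mathrm{loc}}(Q)$; by Remark~\ref{RMK:foldpoint} it touches $\mathcal{S}^2_0$ only tangentially at $F$ and otherwise lies strictly above it. Hence the segment of $\mathcal{F}_0(\rho_*)$ from $\Sigma_3$ (placed at $\theta^*-\alpha_3$, above and away from $\mathcal{S}^2_0$) down to $Q$ lies entirely in the basin of the attracting branch $\mathcal{S}^1_0$ and stays uniformly away from both $F$ and the repelling sheet of $\mathcal{S}^2_0$. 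For $\epsilon$ small, every orbit through a point of $\Sigma_3$ therefore follows the corresponding fibre of $\mathcal{F}_\epsilon$, contracts exponentially onto $\mathcal{S}^1_\epsilon$ near $Q$, and is then transported downward by the superslow flow $\dot r=-\epsilon^{3/2}ar^4/\sqrt2$ of the Remark following Proposition~\ref{PROP:reduced_problem}. As the orbit descends along the nearly vertical manifold $\mathcal{S}^1_\epsilon$ it crosses the horizontal section $\Sigma_4$ transversally in finite time, at a point that is $\mathcal{O}(e^{-c_3/\epsilon})$-close to $\mathcal{S}^1_\epsilon\cap\Sigma_4=(\theta_\epsilon(\beta_4),\beta_4)=(\pi/4,\beta_4)+\Theta(\epsilon^{3/2})$, where the last equality uses Proposition~\ref{PROP:S1_approximation}. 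Since the flow carries $\Sigma_3$ into the basin of $\mathcal{S}^1_\epsilon$ and then downward past the height $\beta_4$, the map $\Pi_{34}\colon\Sigma_3\to\Sigma_4$ is well defined for $\epsilon$ small.

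For the contraction rate I would argue exactly as in Lemma~\ref{LEMMA:Transition1}. Two nearby points of $\Sigma_3$ lie on nearby stable fibres, so after the fast transient their images sit on $\mathcal{S}^1_\epsilon$ up to exponentially small error, and thereafter their transverse separation decays at the normal rate $-ar^2+\mathcal{O}(\epsilon)=\Theta(1)$ in $t_2$-time. Since the residence time of the passage near $\mathcal{S}^1_\epsilon$ before hitting $\Sigma_4$ is $\Omega(\epsilon^{-1})$ (the drift being superslow), the accumulated transverse contraction is bounded by $\mathcal{O}(e^{-c_3/\epsilon})$ for some $c_3>0$; consequently $\Pi_{34}(\Sigma_3)$ has exponentially small $\theta$-width and collapses to $(\pi/4,\beta_4)+\Theta(\epsilon^{3/2})$, as claimed.

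The \emph{main obstacle} is the geometric bookkeeping at entry rather than the contraction estimate. The fibre $\mathcal{F}_0(\rho_*)$ is precisely the one tangent to $\mathcal{S}^2_0$ at the fold $F$, and Lemma~\ref{LEMMA:transition2} only delivers orbits to $\Sigma_3$ within $\Theta(\epsilon^{2/3})$ of it. One must therefore verify that this $\Theta(\epsilon^{2/3})$ entry error cannot deflect orbits onto the repelling branch of $\mathcal{S}^2_0$ or detain them near $F$; this is where choosing $\Sigma_3$ strictly above $\mathcal{S}^2_0$ and uniformly bounded away from $F$ is essential, the point being that the $\mathcal{O}(1)$ transverse gap between $\mathcal{F}_0(\rho_*)$ and the repelling sheet of $\mathcal{S}^2_0$ along the relevant fibre segment dominates the $\Theta(\epsilon^{2/3})$ error. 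Once this inclusion in the basin of $\mathcal{S}^1_\epsilon$ is secured, the remaining analysis is the routine Fenichel contraction already used for $\Pi_{12}$.
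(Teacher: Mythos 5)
Your proposal is correct and takes essentially the same route as the paper, whose proof consists of exactly your core observation: the fast fibration $\mathcal{F}_0$ is the stable foliation of $\mathcal{S}_0^1$ and is transversal to $\Sigma_3$, hence so is the perturbed foliation $\mathcal{F}_\epsilon^1$ of $\mathcal{S}_\epsilon^1$ for small $\epsilon$, after which the contraction and landing point follow analogously to Lemma~\ref{LEMMA:Transition1} (with the $\Theta(\epsilon^{3/2})$ position coming from Proposition~\ref{PROP:S1_approximation}). Your extra bookkeeping at entry --- checking that the $\Theta(\epsilon^{2/3})$ exit error from the fold cannot push orbits out of the basin of $\mathcal{S}^1_\epsilon$, since $\Sigma_3$ sits at an $\epsilon$-independent distance above $\mathcal{S}_0^2$ --- is a detail the paper leaves implicit rather than a different argument.
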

\begin{proof} Since $\mathcal{F}_0$, being the stable foliation of $\mathcal{S}_0^1$, is transversal to $\Sigma_3$, then so is the stable foliation $\mathcal{F}_\epsilon^1$ of the slow manifold $\mathcal{S}_\epsilon^1$ for sufficiently small $\epsilon$. The result follows analogously to Lemma~\ref{LEMMA:Transition1}. \end{proof}

\subsection{The transition map \texorpdfstring{$\Pi_{41}$}{P41} -- passage through the nonhyperbolic point}

We construct now the remaining transition map $\Pi_{41}$. Since the equilibrium point $P_0$ is nonhyperbolic, cf. Proposition~\ref{PROP:hyperbolicityChart2}, we perform a geometric blow-up transformation in order to define $\Pi_{41}$ as a composition of intermediate transition maps near $P_0$, similarly to the methods used in \cite{Szmolyan09}.

For simplicity, we take $\theta=\pi/4+\omega$, where $\omega\in[0,\pi/4]$, and $\epsilon=\varepsilon^2$ in order to perform Taylor expansions near $\varepsilon=0$.
System \eqref{eq:MainBrusselator} expressed in the $\omega$-variable reads as (see also Appendix~\ref{APPENDIX:approx_S1})
\begin{equation}
\label{eq:wr_system}
\begin{array}{rcl}
\dot{\omega} &=& -ar^2\sin\omega (\sin\omega+\cos\omega)+
\sin^2\omega (\sin\omega+\cos\omega)(\cos\omega-\sin\omega)\\
& & \qquad +\varepsilon^2\left[
-r^2\sin\omega(\cos\omega-\sin\omega) +
\varepsilon(a/\sqrt{2}) r^3(\cos\omega-\sin\omega)
\right],\\
\\
\dot{r}&=& -ar^3\sin\omega (\cos\omega-\sin\omega) +r\sin^2\omega (\cos\omega-\sin\omega)^2 \\
& &\qquad +\varepsilon^2\left[
r^3\sin\omega (\sin\omega +\cos\omega) -
\varepsilon(a/\sqrt{2})r^4 (\sin\omega +\cos\omega)
\right].
\end{array}
\end{equation}

\begin{remark}
    \label{RMK:S1_omegaCoordinate}
    Due to Proposition~\ref{PROP:S1_approximation}, any slow manifold $S^1_{\varepsilon}\equiv S^1_{\epsilon^{1/2}}$ is locally the graph of a function
    \[
        \omega_\varepsilon(r)=\varepsilon^3 \frac{r}{\sqrt{2}}+o\left(\varepsilon^3\right).
    \]
    See also Appendix~\ref{APPENDIX:approx_S1}.
\end{remark}

Since the equilibrium point $P_0=(0,0)$ is fully nonhyperbolic, 
we introduce the quasi-homogeneous blow-up transformation
\begin{equation}
    \label{eq:blowUp_Second}
        \begin{array}{rcl}
            \omega &=& \eta^6 \Bar{\omega}\\
            r &=& \eta^3 \Bar{r}\\
            \varepsilon &=& \eta\Bar{\varepsilon},
        \end{array}
\end{equation}
where $\bar{\omega}^2+\bar{r}^2+\bar{\varepsilon}^{2}=1$ and $\eta\in[0,\eta_0]$ for some $\eta_0\ll 1$. This corresponds with a transformation  $\Phi:B\times[0,\eta_0]\rightarrow \mathbb{R}^3$. We study the system in the blown-up coordinates by means of three local charts, namely the entry chart $K_1$, the scaling chart $K_2$, and the exit chart $K_3$, defined respectively by setting $\bar{r}=1$, $\bar{\epsilon}=1$, and $\bar{\omega}=1$. The coordinates in each chart $K_i$, $i=1,2,3$, are thus given by 
\begin{equation}
\label{eq:coordinates_charts}
    \begin{array}{cccc}
       K_1: &  \omega=\eta_1^6\omega_1, & r=\eta_1^3, & \varepsilon=\eta_1\varepsilon_1,\\
       K_2: &  \omega=\eta_2^6\omega_2, & r=\eta_2^3r_2, & \varepsilon=\eta_2, \\
        K_3: & \omega=\eta_3^6, & r=\eta_3^3 r_3, & \varepsilon=\eta_3\varepsilon_3.
    \end{array}
\end{equation}
The change of coordinates from $K_1$ to $K_2$, and from $K_2$ to $K_3$, are thus given respectively by the functions $T_{12}:K_1\rightarrow K_2$ and $T_{23}:K_2\rightarrow K_3$ defined as
\begin{equation}
    \label{eq:coordsChange_second}
    \begin{array}{c}
        (\omega_2,r_2,\eta_2)= T_{12}(\omega_1,\eta_1,\varepsilon_1):=
        \left( 
        \frac{\omega_1}{\varepsilon_1^6}, 
        \frac{1}{\varepsilon_1^3},\eta_1\varepsilon_1
        \right), \\
        \\
        (\eta_3,r_3,\varepsilon_3)=T_{23}(\omega_2,r_2,\eta_2):=
        \left(\eta_2\omega_2^{1/6}, \frac{r_2}{\omega_2^{1/2}}, \frac{1}{\omega_2^{1/6}}  \right).
    \end{array}
\end{equation}
In the following, we construct a transition map $\Pi_{41}:\Sigma_4\rightarrow\Sigma_1$ which will be related to a composition of three intermediate transition maps $\Pi_{\TU{in}}$, $\Pi_{\TU{tran}}$, and $\Pi_{\TU{out}}$, defined on the charts $K_1$, $K_2$, and $K_3$, respectively. In this way, the transition map $\Pi_{41}$ is defined as
\[
    \Pi_{41}= \Phi\circ \Pi_{\TU{out}} \circ T_{23}\circ \Pi_{\TU{tran}} \circ T_{12} \circ \Pi_{\TU{in}}\circ \Phi^{-1}.
\]
A summary of the intermediate steps is portrayed in Figure~\ref{fig:blow_up} below.

\begin{figure}[ht]
     \centering
      \begin{overpic}[width=.8\linewidth]{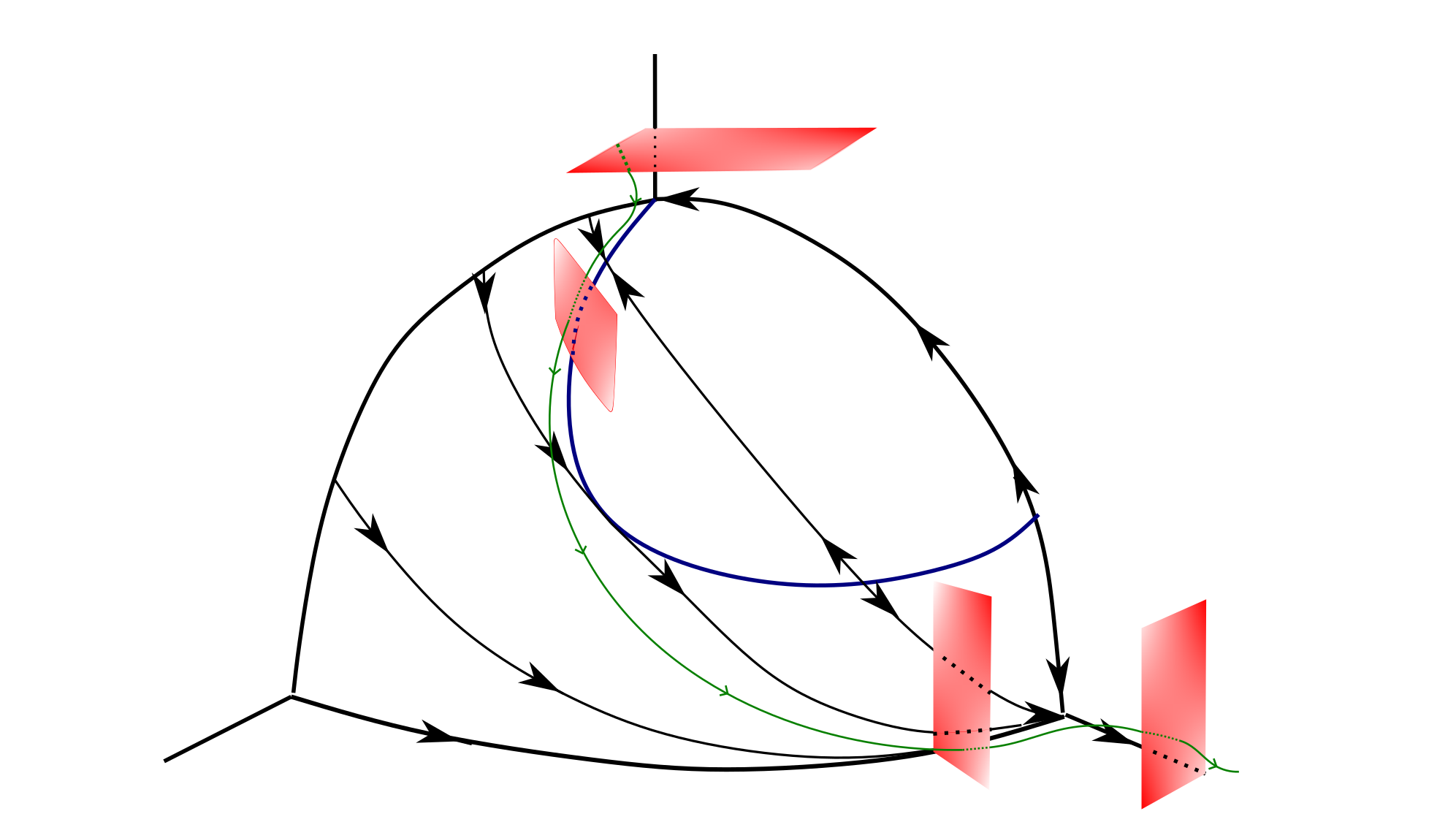}
      \put(50,49){\scriptsize $\Sigma_4$}
      \put(44,53){ $\bar{r}$}
      \put(8,3){ $\bar{\varepsilon}$}
      \put(86,3){$\bar{\omega}$}
      \put(43,30){\scriptsize $\Sigma_{in}$}
      \put(65,1){\scriptsize $\Sigma_{out}$}
      \put(79,16){\scriptsize $\Sigma_1$}
      \end{overpic}
       \hfill
       \caption[Dynamics in second chart]{Scheme of the transition map $\Pi_{41}$, via intermediate transition maps through the sections $\Sigma_{\TU{in}}$ and $\Sigma_{\TU{out}}$ after the equilibrium point $P_0$ is blown-up to a sphere $\{\bar{\omega}^2+\bar{r}^2+\bar{\varepsilon}^2=1\}\times\{0\}$, where the nearby dynamics is revealed after performing appropriate desingularisation. In green, a prototypical orbit passing through all sections is sketched, for $\varepsilon > 0$ sufficiently small.}
        \label{fig:blow_up}
\end{figure}

\subsubsection{Dynamics in the entry chart}
From \eqref{eq:wr_system} and \eqref{eq:coordinates_charts}, it follows that the dynamics in chart $K_1$ is given by
\[
         \dot{\omega_1}= \frac{\dot{\omega}}{\eta_1^6}-\frac{6\omega_1\dot{\eta_1}}{\eta_1}, \qquad
         \dot{\eta_1} = \frac{\dot{r}}{3\eta_1^2},
         \qquad
         \dot{\varepsilon_1} =-\frac{\varepsilon_1\dot{\eta_1}}{\eta_1}.
\]
Therefore, by using $\sin\omega=\omega H(\omega)$, where 
\[H(\omega)=1-\frac{\omega^2}{6}+\frac{\omega^4}{5!}+o(\omega^5),\]
and dividing the resulting vector field by $\eta_1^6$, the ODEs in chart $K_1$ read as 
\begin{equation}
\begin{array}{rl}
\label{eq:blowchart1}
    \omega_1'= & g(\omega_1,\eta_1,\varepsilon_1)-2 \omega_1\eta_1^6h(\omega_1,\eta_1,\varepsilon_1)
     \\
     \eta_1'= & \frac{\eta_1^7}{3}h(\omega_1,\eta_1,\varepsilon_1)
     \\
     \varepsilon_1'= & -\frac{\varepsilon_1 \eta_1^6}{3} h(\omega_1,\eta_1,\varepsilon_1),
\end{array}
\end{equation}
where \begin{align*}
    g(\omega,\eta,\varepsilon)= & -a\omega H(\eta^6\omega)(\sin\eta^6\omega+\cos\eta^6\omega)+
    \omega^2 H^2(\eta^6\omega)(\sin\eta^6\omega+\cos\eta^6\omega) (\cos\eta^6\omega-\sin\eta^6\omega) \\
    & -\eta^2\omega\varepsilon^2 H(\eta^6\omega)(\cos\eta^6\omega-\sin\eta^6\omega) +
    \frac{a}{\sqrt{2}}\varepsilon^3(\cos\eta^6\omega-\sin\eta^6\omega),\\
    h(\omega,\eta,\varepsilon)= & -a\omega H(\eta^6\omega)(\cos\eta^6\omega-\sin\eta^6\omega)+ \omega^2 H^2(\eta^6\omega)(\cos\eta^6\omega-\sin\eta^6\omega)^2 + \\
   & \eta^2\omega^2\varepsilon^2 H(\eta^6\omega)(\sin\eta^6\omega+\cos \eta^6\omega) +
    \frac{a}{\sqrt{2}}\varepsilon^3.
\end{align*}
Here, we write $\cdot':= d/d\tilde{\tau}_1$ for the corresponding rescaled time variable $\tilde{\tau}_1$. Note that the quantity $\eta_1\varepsilon_1$ remains constant along orbits.
Firstly, we analyse how system \eqref{eq:blowchart1} behaves near the invariant planes $\{\eta_1=0\}$ and $\{\varepsilon_1=0\}$. In the former case, \eqref{eq:blowchart1} reads as
\begin{equation}
\label{eq:ODE1}
\begin{array}{rl}
     \omega_1'= & -a\omega_1+\omega_1^2+\frac{a}{\sqrt{2}}\epsilon_1^3, \\
     \varepsilon_1'= & \ 0.
    \end{array}
\end{equation}
It follows that the dynamics on each line $\{\eta_1=0, \varepsilon_1=c\}$, for every $c \geq 0$, is invariant. Let $\mathcal{N}$ be the graph of
\begin{equation}
\label{eq:branch1_equilibria}
    \varepsilon_1=\left(\frac{\sqrt{2}\cdot\omega_1(a-\omega_1)}{a}\right)^{1/3},
\end{equation}
which is a curve of equilibria for \eqref{eq:blowchart1} defined only for $\omega_1\in[0,a]$, see Figure~\ref{fig:Entry_chart}. In fact, system \eqref{eq:ODE1} seen as a one-dimensional ODE parametrized by $\epsilon_1=c$ admits a fold bifurcation at $(\omega_1^*,0,\varepsilon_1^*)=\left(\frac{a}{2},0,(\frac{a}{2\sqrt{2}})^{1/3}\right)$. 
Equivalently, since \eqref{eq:branch1_equilibria} is locally invertible away from the fold point $(\omega_1^*,0,\varepsilon_1^*)$, we can define the curve $\mathcal{N}$ given by the graphs of
\begin{equation}
    \label{eq:brances_inverse}
        \omega_1^{\pm}=\frac{a-\sqrt{a^2-2\sqrt{2}a\varepsilon_1^3}}{2}.
\end{equation}

We aim to construct a section $\Sigma_{\TU{in}}$ transverse to the graph of \eqref{eq:branch1_equilibria} and sufficiently close to the fold point $(\omega_1^*,0,\varepsilon_1^*)$ such that there is a well-defined transition map $\Pi_{\TU{in}}:\tilde{\Sigma}_4\rightarrow\Sigma_{\TU{in}}$. 

In the invariant plane given by $\varepsilon_1=0$, the dynamics is given by
\begin{align*}
    \omega_1' =& (-a\omega_1+\omega_1^2) \left(1+\mathcal{O} (\eta_1^6\omega_1)\right) +2\eta_1^6(a\omega_1^2-\omega_1^2) \left(1+\mathcal{O}(\eta_1^6\omega_1)\right),\\
    \eta_1' =& -\frac{\eta_1^7\omega_1(a-\omega_1)}{3} \left(1+ \mathcal{O}(\eta_1^6\omega_1)\right).
\end{align*}
It is straightforward to see that the vertical axis $\{\omega_1=\epsilon_1=0\}$ consists entirely of equilibria for system \eqref{eq:blowchart1}. In particular, the origin is a partially hyperbolic equilibrium as elaborated in the next proposition.
\begin{proposition}
    \label{PROP:centremanifold_chart1}
    The following statements hold for system \eqref{eq:blowchart1}:
    \begin{enumerate}
        \item The linearisation of \eqref{eq:blowchart1} at the equilibrium $Q_0=(0,0,0)$ has a negative simple eigenvalue $-a$, and zero as a double eigenvalue. Their corresponding eigenspaces are $E_{-a}=span\{(1,0,0)^{\top}\}$ and $E_0=span\{(0,1,0)^{\top}, (0,0,1)^{\top}\}$, respectively.
        \item There exists a 2-dimensional exponentially attracting centre manifold $W_{loc}^c(Q_0)$ at the origin $Q_0$, containing the vertical axis and the graph of $\omega_1^-$ as given in \eqref{eq:brances_inverse} for $\varepsilon_1 < \varepsilon_1^*$. Hence, $W_{loc}^c(Q_0)$ is defined locally as the graph of the function
        \begin{equation}
            \label{eq:graph_centremanifold}
        \omega_1=V(\eta_1,\varepsilon_1)=\omega_1^-(\varepsilon_1)+\mathcal{O}(\eta).
        \end{equation}
    \end{enumerate}
\end{proposition}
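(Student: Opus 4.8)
The plan is to treat the two statements separately: first a direct spectral computation for statement~1, then an application of the centre manifold theorem refined by the invariance of two coordinate planes for statement~2.

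For statement~1, I would compute the Jacobian of the vector field in \eqref{eq:blowchart1} at $Q_0=(0,0,0)$. Since $\eta_1=0$ forces $H(\eta_1^6\omega_1)=H(0)=1$, $\sin(\eta_1^6\omega_1)=0$ and $\cos(\eta_1^6\omega_1)=1$ at the origin, both $g$ and $h$ vanish there, and every monomial in the $\eta_1'$- and $\varepsilon_1'$-equations carries a factor $\eta_1^6$ (respectively $\varepsilon_1\eta_1^6$), so the last two rows of the Jacobian vanish identically. In the $\omega_1'$-equation the correction $-2\omega_1\eta_1^6 h$ and all $\eta_1$- and $\varepsilon_1$-derivatives of $g$ carry a factor of $\eta_1$ or $\varepsilon_1^2$, leaving only $\partial_{\omega_1}g(0,0,0)=-a$. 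Hence the Jacobian is $\mathrm{diag}(-a,0,0)$, which gives the simple eigenvalue $-a$ with eigenvector $(1,0,0)^\top$ and the double eigenvalue $0$ with eigenspace $E_0$, as claimed.

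For statement~2, the existence of a two-dimensional local centre manifold $W^c_{loc}(Q_0)$ tangent to $E_0$ follows from the centre manifold theorem (the references cited in Remark~\ref{RMK:centremanifold_unique}); because the unique nonzero eigenvalue $-a$ is negative, the reduction principle makes $W^c_{loc}$ exponentially attracting at rate $\sim e^{-a\tilde{\tau}_1}$. Writing $W^c_{loc}$ as the graph $\omega_1=V(\eta_1,\varepsilon_1)$ with $V(0,0)=0$ and $DV(0,0)=0$, it remains to pin down $V$ along the two invariant coordinate planes. Both $\{\eta_1=0\}$ and $\{\varepsilon_1=0\}$ are invariant for \eqref{eq:blowchart1} (in the first $\varepsilon_1'\equiv 0$; in the second $\varepsilon_1'=-\tfrac{\varepsilon_1\eta_1^6}{3}h$ vanishes). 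On $\{\eta_1=0\}$ the flow reduces to \eqref{eq:ODE1}, where $\varepsilon_1$ is conserved and the $\omega_1$-dynamics has exactly the equilibria $\omega_1^\pm(\varepsilon_1)$ from \eqref{eq:brances_inverse}; since any invariant graph over $\varepsilon_1$ must consist of equilibria of the fibrewise one-dimensional flow, continuity together with $V(0,0)=0=\omega_1^-(0)$ forces $V(0,\varepsilon_1)=\omega_1^-(\varepsilon_1)$, so the graph of $\omega_1^-$ lies in $W^c_{loc}$. On $\{\varepsilon_1=0\}$ the vertical axis $\{\omega_1=\varepsilon_1=0\}$ is a curve of equilibria tangent to $E_0$, along which the transverse ($\omega_1$-)eigenvalue equals $-a+\mathcal{O}(\eta_1^6)<0$, so it is normally attracting; matching leading orders in the invariance equation $\partial_{\eta_1}V(\eta_1,0)\,\eta_1'=\omega_1'|_{\omega_1=V}$, in which $\eta_1'=\mathcal{O}(\eta_1^7 V)$ cannot balance the dominant $-aV$ on the right, forces $V(\eta_1,0)\equiv 0$, so the vertical axis also lies in $W^c_{loc}$. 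Combining $V(0,\varepsilon_1)=\omega_1^-(\varepsilon_1)$ with the smoothness of $V$ yields $V(\eta_1,\varepsilon_1)=\omega_1^-(\varepsilon_1)+\mathcal{O}(\eta_1)$, which is precisely \eqref{eq:graph_centremanifold}.

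I expect the main obstacle to be the simultaneous containment of both invariant curves in a single centre manifold: since centre manifolds are generally non-unique, the argument must exploit the invariance of the two coordinate planes to force the restriction of any such manifold to coincide with the respective curve of equilibria. The delicate point on $\{\varepsilon_1=0\}$ is that $\eta_1$ is not conserved, so in place of a conserved-quantity argument one must run the leading-order balance in the invariance equation—equivalently, invoke local uniqueness of the normally hyperbolic attracting axis—to conclude $V(\eta_1,0)\equiv 0$.
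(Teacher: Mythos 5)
Your proposal is correct, and its skeleton coincides with the paper's: a direct Jacobian computation at $Q_0$ for part (1), then the centre manifold theorem plus the containment of the two curves and a Taylor expansion in $\eta_1$ for part (2). The one genuine difference is how the containment step is justified. The paper disposes of it in a single line: both the vertical axis and $\mathcal{N}$ are curves of \emph{equilibria} passing through $Q_0$, and any equilibrium sufficiently close to the base point lies on \emph{every} local centre manifold (equilibria are solutions that remain in a neighbourhood of $Q_0$ for all $t\in\mathbb{R}$); this settles the non-uniqueness worry you flag at the end without any computation, and yields $V(0,\varepsilon_1)=\omega_1^-(\varepsilon_1)$ and $V(\eta_1,0)\equiv 0$ simultaneously. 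You instead derive the containments by hand: on $\{\eta_1=0\}$ via the observation that an invariant graph over the conserved variable $\varepsilon_1$ must consist of fibrewise equilibria of \eqref{eq:ODE1}, with continuity and $V(0,0)=0$ selecting the branch $\omega_1^-$; and on $\{\varepsilon_1=0\}$ via a leading-order balance in the invariance equation, where $\partial_{\eta_1}V\cdot\eta_1'=\mathcal{O}(\eta_1^7 V)$ cannot match the dominant term $-aV+\mathcal{O}(V^2)$ unless $V(\eta_1,0)\equiv 0$. Both computations are sound (in the second, dividing by $V$ at a point where $V\neq 0$ gives $-a+\mathcal{O}(V)=\mathcal{O}(\eta_1^7)$, a contradiction in a small enough neighbourhood), and your route has the merit of being self-contained and of making explicit that \emph{any} centre-manifold graph is forced to agree with the equilibrium branches; the paper's route buys brevity by invoking the standard characterisation of centre manifolds, at the cost of leaving that fact implicit. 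The final step, $V(\eta_1,\varepsilon_1)=\omega_1^-(\varepsilon_1)+\mathcal{O}(\eta_1)$ by smoothness of $V$, is identical in both arguments.
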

\begin{proof} Assertion (1) follows directly from the expression of the Jacobian matrix at $(0,0,0)$, which is
\[
    \left[ 
    \begin{array}{ccc}
    -a & 0 & 0\\
    0 & 0 & 0 \\
    0 & 0 & 0
    \end{array}\right].
\]

The existence and attractiveness of $W^c_{loc}(Q_0)$ is given by the \textit{centre manifold theorem}. Recall that the centre manifold is given as the graph of a function $\omega_1=V(\eta_1,\varepsilon_1)$, which is $\mathcal{O}\left(\left\Vert (\eta_1,\epsilon_1) \right\Vert^2\right)$ for being tangent to $E_0$. Moreover, since it contains the vertical axis, it is given as the graph of a function $\omega_1=\varepsilon_1 \tilde{V}(\eta_1,\varepsilon_1)$, where $\tilde{V}(\eta_1,\varepsilon_1)=\mathcal{O}\left(\left\Vert (\eta_1,\epsilon_1) \right\Vert\right)$. 

Since $Q_0\in \mathcal{N}\cap \{\omega_1=\varepsilon_1=0\}$, its centre manifold $W_{loc}^c$ contains both curves. Hence, we obtain \eqref{eq:graph_centremanifold} for the first term in the Taylor expansion of $V$ with respect to $\eta$.
\end{proof}

Since $\Sigma_4$ does not depend on $\epsilon$, c.f. \eqref{eq:Section4}, we lift it into chart $K_1$. Concretely, we consider
\begin{equation}
    \label{eq:section 4_ chart1}
    \tilde{\Sigma}_4=\left\{ \left(\omega_1,\tilde{\beta}_4,\epsilon_1\right) : \omega_1\in[0,\tilde{\alpha}_4], \ \varepsilon_1\in[0,\tilde{\gamma}_4]\right\},
\end{equation}
where $\tilde{\alpha_4}=\alpha_4/\beta_4^2$, $\tilde{\beta}_4=\beta_4^{1/3}$, and $\tilde{\gamma_4}$ is sufficiently small.

Consider the intermediary section 
\begin{equation} 
\label{eq:Section_in}
\Sigma_{\TU{in}}=\left\{ 
    \left( \omega_1, \eta_1,\gamma_{\TU{in}} \right) :
    \vert\omega_1-V(0,\gamma_{\TU{in}})\vert\leq \alpha_{\TU{in}}, \ \eta_1\in[0,\beta_{\TU{in}}]
    \right\},
\end{equation}
for sufficiently small positive constant $\alpha_{\TU{in}}$ (where, in particular, $V(0,\gamma_{\TU{in}})-\alpha_{\TU{in}}>0$), and $\gamma_{\TU{in}}$ below but arbitrarily close to $\epsilon_1^*$, see Figure~\ref{fig:Entry_chart}. 

We consider in more detail the dynamics on $W_{loc}^c(Q_0)$, which are depicted in Figure~\ref{fig:Entry_chart} (b). In particular, we show that $\eta_1$ is monotonically decreasing as stated in the following proposition. 
\begin{proposition}
    \label{PROP:eta1_decreasing}
        When restricted to the centre manifold, $\eta_1$ is monotonically decreasing whenever $\eta_1(0), \varepsilon_1(0)\neq 0$ and $\varepsilon_1<\gamma_{\TU{in}}$
\end{proposition}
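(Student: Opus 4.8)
The plan is to work directly on the centre manifold $W_{loc}^c(Q_0)$, where the reduced dynamics for $\eta_1$ is governed by the second equation of \eqref{eq:blowchart1}, namely $\eta_1' = \frac{\eta_1^7}{3}h(\omega_1,\eta_1,\varepsilon_1)$. Since $\eta_1 \geq 0$, the sign of $\eta_1'$ is entirely determined by the sign of $h$ evaluated along the centre manifold, i.e. at $\omega_1 = V(\eta_1,\varepsilon_1)$. Thus the statement reduces to showing that $h(V(\eta_1,\varepsilon_1),\eta_1,\varepsilon_1) < 0$ on the relevant region, away from the equilibrium set where $\eta_1 = 0$. The monotonic decrease is strict precisely when $\eta_1(0) \neq 0$; the hypothesis $\varepsilon_1(0) \neq 0$ ensures we stay off the vertical axis of equilibria where the flow is trivial.

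First I would compute the leading-order behaviour of $h$ on the centre manifold. Using the expansion $H(0) = 1$ together with $\cos(0) = 1$, $\sin(0) = 0$, the function $h$ at $\eta_1 = 0$ simplifies to $h(\omega_1,0,\varepsilon_1) = -a\omega_1 + \omega_1^2 + \frac{a}{\sqrt{2}}\varepsilon_1^3$. By Proposition~\ref{PROP:centremanifold_chart1}, on $W^c_{loc}$ we have $\omega_1 = \omega_1^-(\varepsilon_1) + \mathcal{O}(\eta_1)$, where $\omega_1^-$ solves $-a\omega_1 + \omega_1^2 + \frac{a}{\sqrt{2}}\varepsilon_1^3 = 0$ by construction (cf. \eqref{eq:branch1_equilibria}). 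Hence to leading order in $\eta_1$ the bracketed quadratic $-a\omega_1 + \omega_1^2 + \frac{a}{\sqrt{2}}\varepsilon_1^3$ \emph{vanishes} on the centre manifold, so the leading term of $h$ is not this quadratic but rather its first-order correction in $\eta_1$ together with the higher-order terms of $h$ itself. This is the subtle point: naively $h$ vanishes at the equilibrium branch, so I must track the next order carefully.

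To resolve this I would argue as follows. Along the branch $\omega_1 = \omega_1^-(\varepsilon_1)$ with $\varepsilon_1 < \varepsilon_1^*$, the quadratic $q(\omega_1,\varepsilon_1) := -a\omega_1 + \omega_1^2 + \frac{a}{\sqrt{2}}\varepsilon_1^3$ satisfies $q = 0$ and $\partial_{\omega_1} q = 2\omega_1 - a < 0$, since $\omega_1^- < a/2 = \omega_1^*$ on the lower branch below the fold. This negative slope, combined with the fact that the centre-manifold correction $\omega_1 - \omega_1^-$ is $\mathcal{O}(\eta_1)$ with a sign determined by the flow's attraction toward $\mathcal{N}$, gives the sign of the leading contribution to $h$. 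The cleanest route is to observe that the full planar dynamics in $\{\varepsilon_1 = c\}$ restricted near $\mathcal{N}$ shows that the $\eta_1$-dynamics on the attracting branch is driven by a strictly negative quantity: I would substitute $\omega_1 = V(\eta_1,\varepsilon_1)$ into $h$, Taylor-expand in $\eta_1$, and verify that the lowest nonvanishing coefficient is strictly negative for $\varepsilon_1 \in (0,\gamma_{\TU{in}})$ with $\gamma_{\TU{in}} < \varepsilon_1^*$. By continuity and compactness of the relevant parameter range, $h < 0$ uniformly on $W^c_{loc}(Q_0) \cap \{0 < \varepsilon_1 < \gamma_{\TU{in}}\}$ for $\eta_1$ small, whence $\eta_1' < 0$ strictly whenever $\eta_1 \neq 0$.

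The main obstacle I anticipate is precisely the degeneracy noted above: because the centre manifold is \emph{itself} (to leading order) the curve of equilibria $\mathcal{N}$ of the $\eta_1 = 0$ subsystem, the dominant quadratic term in $h$ cancels on $W^c_{loc}$, and one must extract the correct sign from the next-order term in the $\eta_1$-expansion of $V$. Getting this sign right requires either an explicit second-order computation of the centre-manifold graph $V(\eta_1,\varepsilon_1)$ or, more elegantly, a geometric argument using the attractivity of the branch $\omega_1^-$ (established in Proposition~\ref{PROP:centremanifold_chart1}) to pin down that the correction pushes $h$ strictly negative. I would favour the geometric argument to avoid heavy computation, invoking that orbits on $W^c_{loc}$ approach $\mathcal{N}$ from the side consistent with $\partial_{\omega_1} q < 0$, so that the residual value of $h$ inherits a definite negative sign. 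Once the sign of $h$ is settled, the monotonicity of $\eta_1$ is immediate from $\eta_1' = \tfrac{1}{3}\eta_1^7 h < 0$.
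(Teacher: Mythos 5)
Your overall strategy---restrict to $W^c_{loc}(Q_0)$, use $\eta_1'=\tfrac{1}{3}\eta_1^7\,h$, substitute $\omega_1=V(\eta_1,\varepsilon_1)$, and settle the sign of $h$ there---is exactly the paper's. However, the ``subtle point'' on which you spend most of your effort is spurious, and your proposed resolution of it would fail. The quadratic appearing in $h$ is \emph{not} the quadratic $q(\omega_1,\varepsilon_1)=-a\omega_1+\omega_1^2+\frac{a}{\sqrt{2}}\varepsilon_1^3$ whose zero set is $\mathcal{N}$: the $\varepsilon^3$-term enters the $\dot r$-equation of \eqref{eq:wr_system} with a \emph{minus} sign (the term $-\varepsilon(a/\sqrt{2})r^4(\sin\omega+\cos\omega)$), while it enters the $\dot\omega$-equation with a plus sign, so after the blow-up one finds $h(\omega_1,0,\varepsilon_1)=-a\omega_1+\omega_1^2-\frac{a}{\sqrt{2}}\varepsilon_1^3$. (The displayed formula for $h$ below \eqref{eq:blowchart1}, with $+\frac{a}{\sqrt{2}}\varepsilon^3$, carries a sign typo, and you took it at face value; the paper's own proof uses the minus sign.) Since $\omega_1^-$ is a root of $q$, one has $-a\omega_1^-+(\omega_1^-)^2=-\frac{a}{\sqrt{2}}\varepsilon_1^3$, hence on the centre manifold $h=-\sqrt{2}\,a\,\varepsilon_1^3+\mathcal{O}(\eta_1)$. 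There is no cancellation at leading order, no need for a second-order expansion of $V$, and the bracket in $\eta_1'$ is strictly negative for $\eta_1$ sufficiently small and $0<\varepsilon_1<\gamma_{\TU{in}}<\varepsilon_1^*$; this one-line substitution, followed by a uniform negative bound in terms of $\gamma_{\TU{in}}$, is precisely how the paper concludes.

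Beyond the misdiagnosis, your fallback ``geometric'' argument would not close the gap even in the degenerate scenario you imagine. Attractivity of the branch $\omega_1^-$ (Proposition~\ref{PROP:centremanifold_chart1}) is a statement about the hyperbolic direction \emph{transverse} to $W^c_{loc}(Q_0)$; it says nothing about the sign of the residual of $h$ \emph{along} the invariant centre manifold. On $W^c_{loc}$ the deviation $V(\eta_1,\varepsilon_1)-\omega_1^-(\varepsilon_1)$ is a fixed function whose sign can only be extracted from the next coefficient in the expansion of $V$, which you never compute---so under your (incorrect) reading of $h$ the proof would remain genuinely incomplete. A useful sanity check you missed: if the leading part of $h$ really vanished identically on $W^c_{loc}$, then $\eta_1'$ would be $o(\eta_1^7)$ there, at odds with the quantitative structure the surrounding construction relies on; that tension should have prompted a re-derivation of $h$ directly from \eqref{eq:wr_system} and the chart relations $\omega=\eta_1^6\omega_1$, $r=\eta_1^3$, $\varepsilon=\eta_1\varepsilon_1$, which immediately restores the minus sign and dissolves the difficulty.
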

\begin{proof}
We use the expression $\omega_1=V(\eta_1,\varepsilon_1)$ as given in Proposition~\ref{PROP:centremanifold_chart1} in \eqref{eq:blowchart1}, for which we first notice that
\[
    \eta_1'=\frac{\eta_1^7}{3}\left[ -a\omega + \omega^2-\frac{a\varepsilon^3}{\sqrt{2}} + \mathcal{O}(\eta_1^2)\right].
\]
Substituting $\omega_1=V(\eta_1,\varepsilon_1)$ in the equation above yields
\[
    \eta_1'=\frac{\eta_1^7}{3}\left[ -\frac{a\sqrt{a^2-2\sqrt{2}a\varepsilon_1^3}}{2}-2\sqrt{2}\varepsilon_1^3+\mathcal{O}(\eta_1^2) \right]<
    \frac{\eta_1^7}{3}\left[ -\frac{a\sqrt{a^2-2\sqrt{2}a\gamma_{\TU{in}}^3}}{2}+\mathcal{O}(\eta_1^2) \right] <0,
\]
for every $\eta_1$ sufficiently small. In particular, if $\tilde{\beta}_4$ is small enough, $\eta_1$ is monotonically decreasing.
\end{proof}

Since the quantity $\eta_1\varepsilon_1$ remains constant, it follows from Proposition~\ref{PROP:eta1_decreasing} that $\varepsilon_1$ is monotonically increasing. Therefore, the transition map $\Pi_{\TU{in}}$ is well defined when restricted to $W_{loc}^c\cap\tilde{\Sigma}_4$, where we define $\Pi_{in}\left(0,\tilde{\beta}_4,0\right)=\left(0,V(0,\gamma_{\TU{in}}),\gamma_{\TU{in}})\right)$. The extension to the whole section $\Sigma_4$ is given by the attractiveness of the centre manifold, as summarised in the following proposition, which now follows directly.
\begin{proposition}
    \label{LEMMA:entrance_transition}
    For each $\Sigma_{\TU{in
    }}$ as in \eqref{eq:Section_in}, there is $\tilde{\Sigma}_4$ such that the transition map $\Pi_{\TU{in}}:\tilde{\Sigma}_4\rightarrow \Sigma_{\TU{in}}$ is well-defined. Moreover, for each $\varepsilon_1$ fixed, the set 
    \[\left\{ \Pi_{\TU{in}}\left(\omega_1, \tilde{\beta}_4,\varepsilon_1 \right): \omega_1\in[0,\tilde{\alpha}_4]\right\} \subset 
    \left\{ \eta_1=\frac{\varepsilon_1\tilde{\beta}_4}{\gamma_{\TU{in}}}, \ \varepsilon_1=\gamma_{\TU{in}} \right\}
    \]
    is a segment of length $\mathcal{O}\left(e^{-\tilde{c}_1/\varepsilon_1}\right)$ for some $\tilde{c}_1>0$, and therefore $\Pi_{\TU{in}}\left(  \tilde{\Sigma}_4\right)$ is an exponentially thin wedge around $W_{loc}^c(Q_0)\cap\Sigma_{\TU{in}}$. More precisely, for each $(\omega_1,\tilde{\beta}_4,\varepsilon_1)\in \tilde{\Sigma}_4$ we have
    \[
        \Pi_{\TU{in}}(\omega_1,\tilde{\beta}_4,\varepsilon_1)=
        \left( 
         \tilde{\omega}_1(\omega_1,\varepsilon_1) +\mathcal{O}\left(
        e^{-\tilde{c}_1/\varepsilon_1}\right) ,\frac{\varepsilon_1\tilde{\beta}_4}{\gamma_{\TU{in}}},\gamma_{\TU{in}}\right),
    \]
   where $\tilde{\omega}_1(\omega_1,\varepsilon_1)\in W_{loc}^c(Q_0)\cap \Sigma_{\TU{in}}$.
\end{proposition}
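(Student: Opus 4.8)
The plan is to decompose the passage from $\tilde{\Sigma}_4$ to $\Sigma_{\TU{in}}$ into a fast contraction onto the attracting centre manifold $W^c_{loc}(Q_0)$ followed by a long, slow drift along it, and to quantify both. First I would record the two exact facts that pin down the last two coordinates of the image. Since $\eta_1\varepsilon_1$ is constant along orbits of \eqref{eq:blowchart1}, any orbit issued from $(\omega_1,\tilde{\beta}_4,\varepsilon_1)\in\tilde{\Sigma}_4$ satisfies $\eta_1\varepsilon_1\equiv\tilde{\beta}_4\varepsilon_1$, so when it reaches $\{\varepsilon_1=\gamma_{\TU{in}}\}$ its $\eta_1$-coordinate is \emph{exactly} $\tilde{\beta}_4\varepsilon_1/\gamma_{\TU{in}}$; this gives the second and third components claimed. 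Well-definedness then reduces to showing that $\varepsilon_1$ increases from its entry value (in $[0,\tilde{\gamma}_4]$ with $\tilde{\gamma}_4<\gamma_{\TU{in}}$) up to $\gamma_{\TU{in}}$ in finite time. For $\varepsilon_1=0$ the plane $\{\varepsilon_1=0\}$ is invariant and $\Pi_{\TU{in}}$ is fixed by the continuous extension onto $W^c_{loc}(Q_0)$; for $\varepsilon_1>0$ I would invoke Proposition~\ref{PROP:eta1_decreasing} together with the conservation law, so that once the orbit is close to $W^c_{loc}(Q_0)$ the coordinate $\eta_1$ is strictly decreasing, hence $\varepsilon_1=\tilde{\beta}_4\varepsilon_1/\eta_1$ is strictly increasing, with finiteness of the hitting time coming from the passage-time estimate below.

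Next I would establish the exponentially thin image. By Proposition~\ref{PROP:centremanifold_chart1} the graph $\omega_1=V(\eta_1,\varepsilon_1)$ is attracting with transverse linearisation $\partial_{\omega_1}\omega_1'=-a+\mathcal{O}(\|(\omega_1,\eta_1,\varepsilon_1)\|)$, so on a small enough chart box this rate is bounded above by $-a/2$. In a first, fast phase the slow variables are essentially frozen, since $\eta_1'=\mathcal{O}(\eta_1^7)$ and $\varepsilon_1'=\mathcal{O}(\eta_1^6)$, and the whole segment $\{\omega_1\in[0,\tilde{\alpha}_4]\}$ is contracted onto an $\mathcal{O}(1)$-neighbourhood of $W^c_{loc}(Q_0)$ in $\mathcal{O}(1)$ time. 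Writing $\xi:=\omega_1-V(\eta_1,\varepsilon_1)$ for the transverse deviation, a Gronwall estimate along the orbit gives $|\xi(\tilde{\tau}_1)|\le|\xi(0)|\exp(-\tfrac{a}{2}\tilde{\tau}_1)$, so the total transverse contraction over the passage is controlled by $\exp(-\tfrac{a}{2}\Delta\tilde{\tau}_1)$, where $\Delta\tilde{\tau}_1$ is the time spent in the chart.

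Then I would bound the passage time from below. Let $s$ denote the running $\varepsilon_1$-coordinate, so that the conservation law reads $\eta_1=\tilde{\beta}_4\varepsilon_1/s$ along the orbit. From $s'=-\tfrac13 s\,\eta_1^6 h$ with $|h|\le C$ on the box, one gets $s'\le \tfrac{C}{3}(\tilde{\beta}_4\varepsilon_1)^6 s^{-5}$, whence
\[
\Delta\tilde{\tau}_1=\int_{\varepsilon_1}^{\gamma_{\TU{in}}}\frac{ds}{s'}\ \ge\ \frac{\gamma_{\TU{in}}^6-\varepsilon_1^6}{2C(\tilde{\beta}_4\varepsilon_1)^6}\ =\ \Omega\big(\varepsilon_1^{-6}\big)\ \ge\ \frac{2\tilde{c}_1}{a\,\varepsilon_1}
\]
for $\varepsilon_1$ small. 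Combining with the transverse estimate gives a contraction factor $\mathcal{O}(e^{-\tilde{c}_1/\varepsilon_1})$; applied to the entry segment $\{\omega_1\in[0,\tilde{\alpha}_4]\}$, of $\mathcal{O}(1)$ length, this shows that its image in $\Sigma_{\TU{in}}$ is a segment of length $\mathcal{O}(e^{-\tilde{c}_1/\varepsilon_1})$, centred on the point $\tilde{\omega}_1(\omega_1,\varepsilon_1)=V(\tilde{\beta}_4\varepsilon_1/\gamma_{\TU{in}},\gamma_{\TU{in}})\in W^c_{loc}(Q_0)\cap\Sigma_{\TU{in}}$ up to an $\mathcal{O}(e^{-\tilde{c}_1/\varepsilon_1})$ error. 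Letting $\varepsilon_1$ range over $[0,\tilde{\gamma}_4]$ sweeps these exponentially thin segments along $W^c_{loc}(Q_0)\cap\Sigma_{\TU{in}}$, producing the advertised wedge, which is thinnest as $\varepsilon_1\to0$.

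The main obstacle is the passage-time analysis: the drift along $W^c_{loc}(Q_0)$ degenerates as $\eta_1\to0$, since both $\eta_1'$ and $\varepsilon_1'$ carry a factor $\eta_1^6$, so this is not a standard Fenichel exchange and the time spent in the chart diverges as $\varepsilon_1\to0$. The delicate point is to check that, despite this long sojourn, the transverse rate stays uniformly negative, which holds because the correction to $-a$ is uniformly small on the small chart box rather than merely asymptotically small, so that the accumulated contraction genuinely beats the $e^{-\tilde{c}_1/\varepsilon_1}$ threshold. The coarse bound $|h|\le C$ already suffices here and avoids having to evaluate $h$ along the manifold.
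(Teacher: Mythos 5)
Your proposal is correct and follows essentially the same route as the paper's proof: the exact values of the $\eta_1$- and $\varepsilon_1$-coordinates of the image from the conserved quantity $\eta_1\varepsilon_1$, well-definedness via the monotonicity of $\eta_1$ (Proposition~\ref{PROP:eta1_decreasing}) with the continuous extension at $\varepsilon_1=0$, and the exponentially thin wedge from the exponential attractivity of $W^c_{loc}(Q_0)$ (Proposition~\ref{PROP:centremanifold_chart1}) --- your passage-time estimate $\Delta\tilde{\tau}_1=\Omega\left(\varepsilon_1^{-6}\right)$ simply makes explicit the quantification that the paper leaves implicit in ``follows directly''. One minor correction: the transverse rate is not uniformly $-a/2$, since along $\mathcal{N}$ the nontrivial eigenvalue is $-\sqrt{a^2-2\sqrt{2}a\varepsilon_1^3}$, which degenerates as $\gamma_{\TU{in}}\rightarrow\varepsilon_1^*$ (and $\omega_1$ travels up to $\omega_1^-(\gamma_{\TU{in}})$, an $\mathcal{O}(1)$ distance from $Q_0$, so the box is not small); replacing $-a/2$ by a constant $-\kappa(\gamma_{\TU{in}})<0$ fixes this and is harmless because $\tilde{c}_1$ is unspecified.
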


\begin{figure}[ht]
\centering
    \begin{subfigure}{0.45\textwidth}
     \centering
      \begin{overpic}[width=\linewidth]{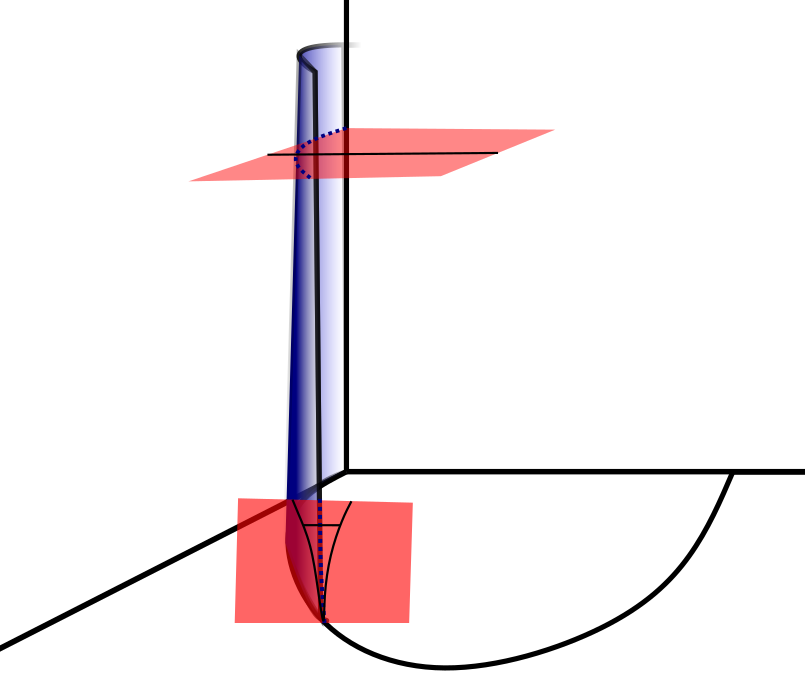}
      \put(0,2){$\varepsilon_1$}
\put(95,29){$\omega_1$}
\put(45,88){$\eta_1$}  
\put(63,71){\scriptsize$\tilde{\Sigma}_4$}
\put(52,15){\small$\Sigma_{\TU{in}}$}
\put(83,10){\small$\mathcal{N}$}
\put(16,45){\small$W_{loc}^c(Q_0)$}
      \end{overpic}
      \caption{}
    \end{subfigure}
    \hspace{2em}
        \begin{subfigure}{0.45\textwidth}
        \centering
    \begin{overpic}[width=\textwidth]{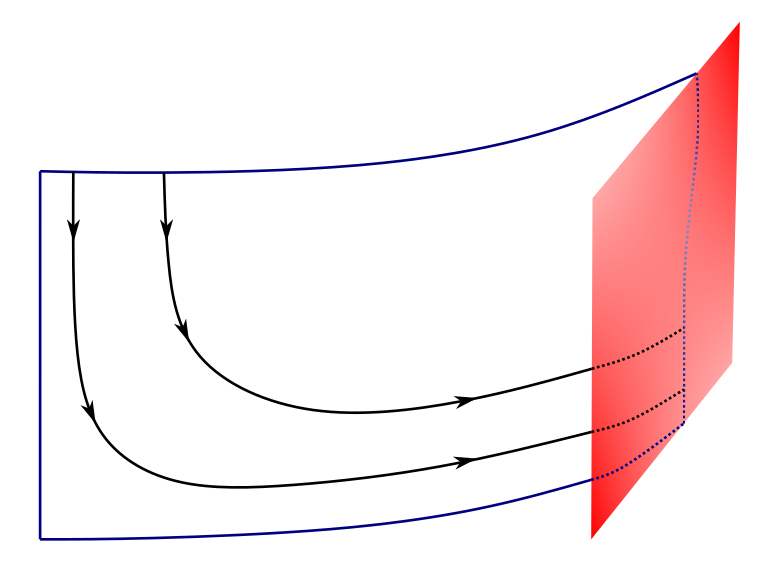}
    \put(30,58){$W_{loc}^c(Q_0)$}
    \put(98,26){$\Sigma_{\TU{in}}$}
    \end{overpic}
        \caption{}
    \end{subfigure}

    \hfill
 \caption[Dynamics in entry chart]{The transition map $\Pi_{\TU{in}}$ in the entry chart $K_1$. In (a), the sections $\tilde{\Sigma}_4$ and $\Sigma_{\TU{in}}$ are portrayed in red. In blue, a sketch of $W_{loc}^c(Q_0)$ is shown, where its intersection with $\Sigma_1,\Sigma_{\TU{in}}$ is drawn as a dotted line. The image set  $\Sigma_{\TU{in}} \cap \Pi_{\TU{in}}\left( \tilde{\Sigma}_4\right)$ is an exponentially small wedge, with vertex on the line of equilibria $\mathcal{N}\subset\{\eta_1=0\}$. In (b), a sketch of Proposition~\ref{PROP:centremanifold_chart1} is presented, where the dynamics on the centre manifold are depicted in black.}
         \label{fig:Entry_chart}
\end{figure}

\subsubsection{Dynamics in the scaling chart}
We now analyse the dynamics in chart $K_2$. Note that
\[
        \dot{\omega}_2=\frac{\dot{\omega}}{\eta_2^6}, \qquad \dot{r_2}=\frac{\dot{r}}{\eta_2^3}.
\]
Therefore, after reparametrising the time variable so that the vector field is divided by $\eta_2^6$ we obtain
\begin{equation}
    \label{blowchart_tran}
    \begin{split}
    \omega_2'= & -ar_2^2\omega_2(\sin\eta_2^6\omega_2+\cos\eta_2^6\omega_2)+ 
    \omega_2^2(\sin\eta_2^6\omega_2+\cos\eta_2^6\omega_2)(\cos\eta_2^6\omega_2-\sin\eta_2^6\omega_2)\\
    & +\frac{a}{\sqrt{2}}r_2^3(\cos\eta_2^6\omega_2-\sin \eta_2^6\omega_2)+ \eta_2^2r_2^2\omega_2(\cos\eta_2^6\omega_2-\sin\eta_2^6\omega_2) + \mathcal{O}(\eta_2^6\omega_2),\\
    r_2'= & \eta_2^6r_2\left[
    -ar_2^2\omega_2(\cos\eta_2^6\omega_2-\sin\eta_2^6\omega_2)+\omega_2^2(\cos\eta_2^6\omega_2-\sin\eta_2^6\omega_2)^2 \right]\\
    &+\eta_2^6r_2\left[-\frac{a}{\sqrt{2}}r^3(\sin\eta_2^6\omega_2+\cos\eta_2^6\omega_2)+\eta_2^2r_2^2\omega_2(\sin\eta_2^6\omega_2+\cos\eta_2^6\omega_2) 
    \right] + \mathcal{O}(\eta_2^6\omega_2)\\
    \eta_2'= & 0,\\
    \end{split}
\end{equation}
where $\cdot':=d/d\tilde{\tau}_2$ for the corresponding rescaled time variable. Observe that \eqref{blowchart_tran} is a slow-fast system in standard form. Its layer problem is given by $\eta_2=0$ and reads as
\begin{align*}
    \omega_2'= & -ar_2^2\omega_2+\omega_2^2+\frac{a}{\sqrt{2}}r_2^3, \\
    r_2'= & 0,
\end{align*}
where the set of equilibria is given by the two branches
\begin{equation}
    \label{eq:equilibria_chart2}
        \omega_2^{\pm}(r_2)=\frac{ar_2^2\pm \sqrt{a^2r_2^4-2^{3/2}ar_2^3}}{2}.
\end{equation}
By means of the change of coordinates $T_{12}$, it is straightforward to see that $\mathcal{N}$, cf. \eqref{eq:branch1_equilibria}, in chart $K_2$ is given by \eqref{eq:equilibria_chart2}. The fold point is given precisely at $(\omega_2^*,r_2^*,0)$, where $\omega^+_2(r_2)=\omega^-_2(r_2)$, so that $r_2^*=\frac{2\sqrt{2}}{a}$ and $\omega_2^*=\frac{4}{a}$. Therefore, the transition map we aim to construct is a transition through a fold point, similar to the transition in Subsection~\ref{SEC:PI_23}.

Notice that the transversal $\Sigma_{\TU{in}}$ in chart $K_2$ is given by 
\begin{equation}
\label{eq:Sigma_in_K2}
    \Sigma_{\TU{in}}=\left\{ \left( \omega_2, \tilde{\beta}_{\TU{in}}, \eta_2 \right) : 
    \vert \omega_2- \delta \vert \leq \tilde{\alpha}_{\TU{in}}, \
    \eta_2\in[0,\tilde{\gamma}_{\TU{in}}]\right\}
\end{equation}
where $\delta=V(0,\gamma_{\TU{in}})/\gamma_{\TU{in}}^6$, $\tilde{\alpha}_{\TU{in}}=\alpha_{\TU{in}}/\gamma_{\TU{in}}^6$, $\tilde{\beta}_{\TU{in}}=1/\gamma_{\TU{in}}^3$ and $\tilde{\gamma}_{\TU{in}}=\beta_{\TU{in}}\gamma_{\TU{in}}$. Here $\tilde{\alpha}_{\TU{in}}, \tilde{\gamma}_{\TU{in}}$ are sufficiently small. Consider the transverse section
\begin{equation}
\label{eq:Sigma_out}
    \Sigma_{\TU{out}}=\{(\omega_2^*+\alpha_{\TU{out}},r_2,\eta_2) : r_2\in[0,r_2^*+ \beta_{\TU{out}}], \; \eta_2\in[0,\gamma_{\TU{out}}]\}
\end{equation}
for some positive constants $\alpha_{\TU{out}}>0$, and sufficiently small $\beta_{\TU{out}},\gamma_{\TU{out}}>0$. A transition map $\Pi_{\TU{tran}}:\Sigma_{\TU{in}}\rightarrow\Sigma_{\TU{out}}$ is given in the next proposition.
\begin{proposition}
    \label{LEMMA: transit_fold_blowup}
For each transversal $\Sigma_{\TU{out}}$ as in \eqref{eq:Sigma_out} , there exists $\Sigma_{\TU{in}}$ as in \eqref{eq:Sigma_in_K2} such that the transition map $\Pi_{\TU{tran}}:\Sigma_{\TU{in}}\rightarrow\Sigma_{\TU{out}}$ is well defined. For each $\eta_2=\eta_{2,0}$  fixed,  the set 
\[
   \left\{ \Pi_{\TU{tran}}\left(\omega_2,\tilde{\beta}_{\TU{in}},\eta_{2,0} \right) :  \vert\omega_2-\delta\vert \leq \tilde{\alpha}_{\TU{in}}  \right\} \subset
   \left\{ \omega_2= \omega_2^*+\alpha_{\TU{out}}, \ \eta_2=\eta_{2,0} \right\}
\]
is a segment of length $\mathcal{O}\left(e^{-\tilde{c}_2/\eta_{2,0}^6}\right)$, for some $\tilde{c}_2>0$, around $(\omega_2^*+\alpha_{\TU{out}}, r_2^*+\Theta\left( \eta_{2,0}^4 \right),\eta_{2,0})$. Hence, for every $\left(\omega_2,\tilde{\beta}_{\TU{in}} ,\eta_2 \right)\in\Sigma_{\TU{in}}$,
\[
    \Pi_{\TU{tran}}\left( \omega_2,\tilde{\beta}_{\TU{in}},\eta_2 \right)=
    \left( \omega_2^*+\alpha_{\TU{out}},r_2^*+\Theta \left( \eta_2^4\right),\eta_2 \right)
\]
\end{proposition}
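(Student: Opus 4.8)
The plan is to read \eqref{blowchart_tran}, for each frozen value of $\eta_2$, as a planar slow--fast system in standard form with fast variable $\omega_2$, slow variable $r_2$, and singular parameter $\hat\epsilon:=\eta_2^6$ (note $\eta_2'=0$, so $\eta_2$ is a genuine parameter). Its critical manifold is the pair of branches $\omega_2^\pm(r_2)$ from \eqref{eq:equilibria_chart2}, which coalesce at the fold $(\omega_2^*,r_2^*)=(4/a,2\sqrt2/a)$. Thus $\Pi_{\TU{tran}}$ is a passage through a regular fold, completely analogous to the transition $\Pi_{23}$ of Subsection~\ref{SEC:PI_23}, and I would derive it from the same classical fold results invoked in Lemma~\ref{LEMMA:transition2}.

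First I would verify that the fold is nondegenerate and satisfies the normal switching condition. Writing $F(\omega_2,r_2)=-ar_2^2\omega_2+\omega_2^2+\frac{a}{\sqrt2}r_2^3$ for the layer right-hand side, a direct computation gives $\partial_{\omega_2}F(\omega_2^*,r_2^*)=2\omega_2^*-a(r_2^*)^2=0$, $\partial_{\omega_2}^2F=2\neq 0$, and $\partial_{r_2}F(\omega_2^*,r_2^*)\neq 0$, so the fold is quadratic and regular. The branch $\omega_2^-$ is attracting, since there $\partial_{\omega_2}F=2\omega_2^--ar_2^2=-\sqrt{a^2r_2^4-2^{3/2}ar_2^3}<0$, while $\omega_2^+$ is repelling; moreover, substituting $F=0$ into the $r_2$-equation shows the reduced flow on $\omega_2^-$ decreases $r_2$ monotonically towards $r_2^*$ (the analogue of Proposition~\ref{PROP:reduced_problem}), so orbits indeed reach the fold. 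By construction $\Sigma_{\TU{in}}$ in \eqref{eq:Sigma_in_K2} is a section at the fixed value $r_2=\tilde\beta_{\TU{in}}>r_2^*$ centred at $\delta$, which (via $T_{12}$ and Proposition~\ref{PROP:centremanifold_chart1}) lies on $\omega_2^-$; hence $\Sigma_{\TU{in}}$ is transverse to the attracting branch and sits in its stable foliation, while $\Sigma_{\TU{out}}$ in \eqref{eq:Sigma_out} is transverse to the fast fibres just past the fold.

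With these structural facts in place I would invoke the classical fold-passage theorem (\cite[Lemma 4.3 and Theorem 4.2]{Wechselberger2020}, or \cite{Krupa01}) applied to the slice system with small parameter $\hat\epsilon=\eta_2^6$. Fenichel theory \cite[Theorem 2.4.2]{Kuehn15} provides an attracting slow manifold over $\omega_2^-$ together with exponential attraction towards it, which both shows $\Pi_{\TU{tran}}$ is well defined and yields the contraction estimate: the incoming $\omega_2$-interval $|\omega_2-\delta|\le\tilde\alpha_{\TU{in}}$ is squeezed to a segment of length $\mathcal{O}(e^{-\tilde c_2/\eta_2^6})$, since the contraction accumulated over an $O(1)$ slow passage to the fold scales like $e^{-\tilde c_2/\hat\epsilon}$. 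The fold theorem then places the jump point at the slow value $r_2^*+\Theta(\hat\epsilon^{2/3})$; because $\hat\epsilon^{2/3}=(\eta_2^6)^{2/3}=\eta_2^4$ and the fast fibres leave $r_2$ unchanged to leading order, the image on $\Sigma_{\TU{out}}$ is centred at $(\omega_2^*+\alpha_{\TU{out}},r_2^*+\Theta(\eta_2^4),\eta_2)$, matching the asserted estimate.

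The main obstacle is uniformity in the frozen coordinate $\eta_2$. Unlike the standard setting, $\eta_2$ plays a double role: it generates the singular parameter $\hat\epsilon=\eta_2^6$ and it simultaneously enters the coefficients of \eqref{blowchart_tran} through the factors $\cos(\eta_2^6\omega_2)$, $\sin(\eta_2^6\omega_2)$ and, most importantly, through a term of order $\eta_2^2$ in the fast field. I would therefore treat the $\eta_2$-dependent terms as a smooth regular perturbation of the $\eta_2=0$ fold geometry and argue that, for $\eta_2$ in a sufficiently small interval $[0,\gamma]$, the fold persists as a nondegenerate regular fold whose location depends smoothly on $\eta_2$, while the constants in the fold theorem can be chosen uniformly, so that the orders $\Theta(\eta_2^4)$ and $\mathcal{O}(e^{-\tilde c_2/\eta_2^6})$ hold uniformly as $\eta_2\to 0$. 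Pinning down exactly how the $\eta_2^2$-correction to the fast field displaces the jump point relative to the unperturbed value $r_2^*$, and showing this parametrised family of fold passages is controlled simultaneously for all small $\eta_2$, is the only genuinely delicate point; the remaining steps are the routine fold verifications sketched above.
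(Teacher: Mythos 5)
Your proposal follows essentially the same route as the paper's proof: freeze $\eta_2$, read \eqref{blowchart_tran} as a standard slow--fast system with singular parameter $\xi=\eta_2^6$, verify the nondegenerate-fold and transversality conditions for $f(\omega_2,r_2)=-ar_2^2\omega_2+\omega_2^2+\frac{a}{\sqrt{2}}r_2^3$ at $(\omega_2^*,r_2^*)$, and invoke the classical fold-passage result to obtain the $\Theta\left(\xi^{2/3}\right)=\Theta\left(\eta_2^4\right)$ displacement and the $\mathcal{O}\left(e^{-\tilde{c}_2/\eta_2^6}\right)$ contraction. The uniformity issue you flag as the one delicate point is not a gap in the paper's treatment: after the renaming $\xi=\eta_2^6$, all residual $\eta_2$-dependence (the trigonometric factors and the $\eta_2^2$-term, since $\eta_2^2=\xi^{1/3}$) is absorbed into smooth $\mathcal{O}\left(\xi^{1/3}\right)$ remainders, which is exactly the perturbed setting covered by \cite[Theorem 2.1 and Remark 2.11]{Krupa01}, so your proposed regular-perturbation argument is precisely what the cited result already provides.
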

\begin{proof}
    Consider the functions
\begin{align*}
f(\omega,r)= & -ar^2\omega+\omega^2+\frac{a}{\sqrt{2}}r^3,    \\
g(\omega,r)= &\left(-ar^2\omega+\omega^2r-\frac{a}{\sqrt{2}}r^3\right) r,
\end{align*}
so that, by renaming the small parameter $\xi=\eta_2^6$, \eqref{blowchart_tran} reads as
\begin{align*}
\omega_2'= & f(\omega_2,r_2) + \mathcal{O}(\xi^{1/3}), \\
r_2'= &\xi \left[ g(\omega_2,r_2) +\mathcal{O}(\xi^{1/3}) \right].
\end{align*}
Since $(\omega_2^*,r_2^*,0)$ is a fold point, it satisfies
\[
    f(\omega_2^*,r_2^*)=\partial_{\omega_2}f(\omega_2^*,r_2^*)=0.
\]
The main statement is now a direct application of \cite[Theorem 2.1 and Remark 2.11]{Krupa01}, where the following aditional nondegeneracy conditions are required:
\[ 
\partial^2_{\omega\omega}f(\omega_2^*,r_2^*)\neq 0, \qquad \partial_rf(\omega_2^*,r_2^*)\neq 0, \qquad g(\omega_2^*,r_2^*)\neq 0. 
\]
This is indeed the case for our situation since 
\[
    \partial^2_{\omega\omega}f(\omega_2^*,r_2^*)= \frac{8}{a},
    \quad \partial_rf(\omega_2^*,r_2^*)=-\frac{8}{\sqrt{2}a}, \qquad g(\omega_2^*,r_2^*)=-\frac{128}{a\sqrt{2}}.
\]
It follows that the perturbation order is $\Theta\left( \xi^{2/3} \right)$ and the length of the interval is $\mathcal{O}\left( e^{-\tilde{c}_2/\xi} \right)$; hence, by substituting back $\xi=\eta_2^6$, we can conclude the result.
\end{proof}

\begin{remark}
    \label{REMARK: alpha_big} Note that we may choose $\alpha_{\TU{out}}$ arbitrarily large. This fact will be used in the next subsection.
\end{remark}

\begin{figure}[ht]
     \centering      \begin{overpic}[width=.6\linewidth]{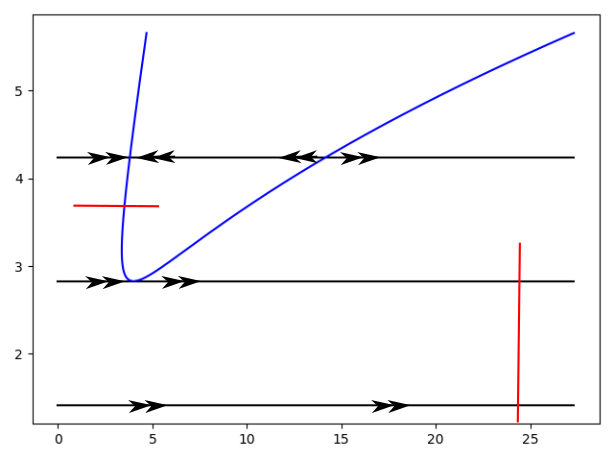}
     \put(55,-2){$\omega_2$}
     \put(-2,35){$r_2$}
     \put(12,37){$\Sigma_{in}$}
     \put(70,55){$\mathcal{N}$}
     \put(86,35){$\Sigma_{out}$}
     \put(15,25){$(\omega_2^*,r_2^*)$}
     \end{overpic}  
     \hfill
    \caption[Dynamics in scaling chart]{Transition through the regular fold point $(\omega_2^*,r_2^*)$ in the slow-fast system in standard form \eqref{blowchart_tran}, in the scaling chart $K_2$. The critical manifold $\mathcal{N}$ is portrayed in blue, and the fast fibration in black. Double arrows indicate the direction of the dynamics of the layer problem. The sections $\Sigma_{\TU{in}}$ and $\Sigma_{\TU{out}}$ are presented in red. Notice that $\Sigma_{\TU{out}}$ can be chosen arbitrarily far away from the fold point.}
        \label{fig:Scaling_chart}
\end{figure}

\subsubsection{Dynamics in the exit chart}
We now derive an ODE system for the dynamics in the chart $K_3$. From \eqref{eq:coordinates_charts}, it follows that
\[
\dot{\eta_3}=\frac{\dot{\omega}}{6\eta_3^3}, \qquad 
\dot{r_3}=\frac{\dot{r}}{\eta_3^3}-\frac{3r_3\dot{\eta_3}}{\eta_3}, \qquad \dot{\epsilon_3}=-\frac{\epsilon_3 \dot{\eta}_3}{\eta_3}.
\]
Therefore, upon dividing the vector field by $\eta_3^6$ and denoting $\cdot':=d/d\tilde{\tau}_3$ for the resulting time variable, the dynamics in $K_3$ is governed by the ODE system
\begin{equation}
    \label{eq:blowchart2}
    \begin{split}
    \eta_3'= & \frac{\eta_3}{6}
    \left( \begin{split}
    -ar_3^2(\sin\eta_3^6+\cos\eta_3^6)H(\eta_3^6)+
    (\sin\eta_3^6+\cos\eta_3^6)(\cos\eta_3^6-\sin\eta_3^6)H^2(\eta_3^6)\\
 +\frac{a}{\sqrt{2}}\varepsilon_3^3r_3^3(\cos\eta_3^6-\sin\eta_3^6)-\eta_3^2\varepsilon_3^2r_3^2(\sin\eta_3^6+\cos\eta_3^6)H(\eta_3^6) \end{split}\right) \\
    r_3' = & \frac{r_3}{2}\left(\begin{split}
    ar_3^2(\sin\eta_3^6+\cos\eta_3^6)H(\eta_3^6)-(\sin\eta_3^6+\cos\eta_3^6)(\cos\eta_3^6-\sin\eta_3^6)H^2(\eta_3^6)\\
    -\frac{a}{\sqrt{2}}\varepsilon_3^3r_3^3(\cos\eta_3^6-\sin\eta_3^6) +\eta_3^2\varepsilon_3^2r_3^2(\sin\eta_3^6+\cos\eta_3^6)H(\eta_3^6)
    \end{split}\right) \\
    & \qquad + \eta_3^{6}r_3 \left( \begin{split} 
    -ar_3^2(\cos\eta_3^6-\sin\eta_3^6)H(\eta_3^6)+(\cos\eta_3^6-\sin\eta_3^6)^2H^2(\eta_3^6) \\
    -\frac{a}{\sqrt{2}}\epsilon_3^3r_3^3(\sin\eta_3^6+\cos\eta_3^6) + \eta_3^2r_3^2\epsilon_3^2(\sin\eta_3^6+\cos\eta_3^6) H(\eta_3^6)
    \end{split}\right)\\
    \varepsilon_3' = & \frac{\varepsilon_3}{6}\left( \begin{split}
    ar_3^2(\sin\eta_3^6+\cos\eta_3^6)H(\eta_3^6) -(\sin\eta_3^6+\cos\eta_3^6)(\cos\eta_3^6-\sin\eta_3^6)H^2(\eta_3^6)\\
    -\frac{a}{\sqrt{2}}\varepsilon_3^3r_3^3(\cos\eta_3^6-\sin\eta_3^6) +\eta_3^2\varepsilon_3^2r_3^2(\sin\eta_3^6+\cos\eta_3^6)H(\eta_3^6) 
    \end{split}\right)
\end{split}.
\end{equation}

By means of the change of charts $T_{23}$, see \eqref{eq:coordsChange_second}, the section $\Sigma_{out}$ in chart $K_3$ is expressed as
\begin{equation}
\label{eq:Sigma_out_K3}
    \Sigma_{\TU{out}}=\left\{
    (\eta_3,r_3,\tilde{\gamma}_{\TU{out}}) : \eta_3\in[0,\tilde{\alpha}_{\TU{out}}], \ 
    r_3 \in [0,\tilde{\beta}_{\TU{out}}]\right\},
\end{equation}
where $\tilde{\beta}_{\TU{out}}=\frac{r_2^*+\beta_{\TU{out}}}{(\omega_2^*+\alpha)^{1/6}}$, $\tilde{\gamma}_{\TU{out}}=(\omega_2^*+\alpha_{\TU{out}})^{-1/6}$, $\tilde{\alpha}_{\TU{out}}=\gamma_{\TU{out}}(\omega_2^*+\alpha_{\TU{out}})^{1/6}$. Due to the arbitrary choice of $\alpha_{\TU{out}}$ (see Remark~\ref{REMARK: alpha_big}), without loss of generality one can take $\tilde{\alpha}_{\TU{out}},\tilde{\beta}_{\TU{out}},\tilde{\gamma}_{\TU{out}}$ arbitrarily small.

For the analysis of \eqref{eq:blowchart2}, recall that the quantity $\eta_3\varepsilon_3$ remains constant. Observe that each plane $\{\eta_3=0\}$, $\{\varepsilon_3=0\}$, and $\{r_3=0\}$, is invariant. In the first case, the flow is defined by the  system
\begin{equation}
\label{eq:invariantplane_chartOut}
\begin{split}
    r_3'=&\frac{r_3}{2}\left( ar_3^2-1-\frac{a}{\sqrt{2}}\varepsilon_3^3r_3^3 \right) \\
    \varepsilon_3'=& \frac{\varepsilon_3}{6}\left( ar_3^2-1-\frac{a}{\sqrt{2}}\varepsilon_3^3r_3^3 \right).
\end{split}
\end{equation}
A curve of equilibria is given by the equation
\begin{equation}
\label{eq:equi_invplane1}
    ar_3^2-1-\frac{a}{\sqrt{2}}\varepsilon_3^3r_3^3=0,
\end{equation}
which corresponds to the curve $\mathcal{N}$ in chart $K_2$, c.f.\eqref{eq:equilibria_chart2}, by means of the change of coordinates $T_{23}$ in \eqref{eq:coordsChange_second}. In particular, the fold point $(\omega_2^*,r_2^*,0)$ is mapped to 
\[
(0,r_3^*,\varepsilon_3^*)=\left( 0, \frac{r_2^*}{\sqrt{\omega_2^*}},\frac{1}{\sqrt[6]{w_2^*}} \right).
\]
The full phase portrait in the plane $\{\eta_3=0\}$ is given in Figure~\ref{fig:Exit_chart_planes} (a). 
 In the second case, the dynamics on $\{\varepsilon_3=0\}$ is defined by
 \begin{equation}
     \label{eq:invariantplane_second}
     \begin{split}
     \eta_3'=& \frac{\eta_3(\sin\eta_3^6+\cos\eta_3^6)H(\eta_3^6)}{6}\left[ 
     -ar_3^2+(\cos\eta_3^6-\sin\eta_3^6)H(\eta_3^6)
     \right]\\
     r_3'= &  \frac{r_3(\sin\eta_3^6+\cos\eta_3^6)H(\eta_3^6)}{2}\left[
     ar_3^2-(\cos\eta_3^6-\sin\eta_3^6)H(\eta_3^6)\right]\\
    & -\eta_3^6r_3(\cos\eta_3^6-\sin\eta_3^6)H(\eta_3^6)\left[  
     ar_3^2- (\cos\eta_3^6-\sin\eta_3^6)H(\eta_3^6)
     \right].
     \end{split}
 \end{equation}
A line of equilibria is thus given by the graph of the function
\begin{equation}
\label{eq:equi_invplane2}
    r_3=\sqrt{\frac{(\cos\eta_3^6-\sin\eta_3^6)H(\eta_3^6)}{a}}.
\end{equation}
The geometry of the orbits can be deduced by taking $r_3'/\eta_3'$, which yields
\[
    \frac{dr_3}{d\eta_3}=-3r_3\left( \frac{\sin\eta_3^6+\cos\eta_3^6 -\eta^5(\cos\eta_3^6-\sin\eta_3^6)}{\eta(\sin\eta_3^6+\cos\eta_3^6)}\right).
\]
The equation above is of separable variables and can be solved explicitly. The solution for the initial value problem $r_3(\eta_{3,0})=r_{3,0}$ is thus given by
\[
r_3(\eta_3)=R_0\cdot\frac{\sqrt{\sin\eta_3^6+\cos\eta_3^6}}{\eta_3^3}, \qquad R_0=\frac{r_{3,0}\eta_{3,0}^3}{\sqrt{\sin\eta_{3,0}^6+\cos\eta_{3,0}^6}}.
\]
Notice that there exists a unique point $F^*=(\eta^*_3,r_3^*)$ in which the orbits and the curve \eqref{eq:equi_invplane2} are tangent. For a global picture of the dynamics in $\{\varepsilon_3=0\}$ see Figure~\ref{fig:Exit_chart_planes} (b).

\begin{figure}[ht]
\centering
    \begin{subfigure}{0.45\textwidth}
     \centering
      \begin{overpic}[width=\linewidth]{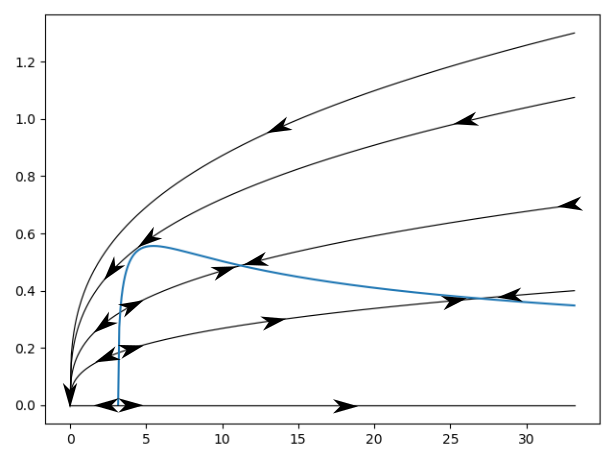}
      \end{overpic}
      \caption{}
    \end{subfigure}
        \begin{subfigure}{0.45\textwidth}
        \centering
    \begin{overpic}[width=\textwidth]{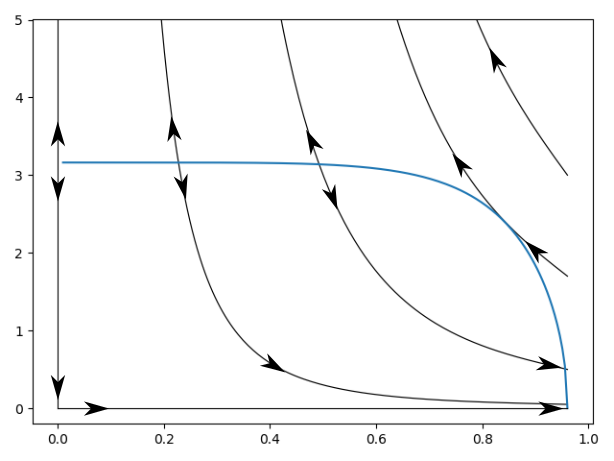}
    \put(-55,-2){$r_3$}
    \put(55,-2){$\eta_3$}
    \put(-105,37){$\varepsilon_3$}
    \put(-2,37){$r_3$}
    \put(-20,20){$\mathcal{N}$}
    \end{overpic}
        \caption{}
    \end{subfigure}

    \hfill
       \caption[Dynamics in exit chart]{The dynamics on the exit chart $K_3$, restricted to the planes $\{\eta_3=0\}$ and $\{\varepsilon_3=0\}$ in (a) and (b), respectively. In (a), orbits (in black, with arrows) are organised around the curve $\mathcal{N}$ (in blue); in (b), orbits are organised around the line of equilibria (in blue) given by \eqref{eq:equi_invplane2}.}
        \label{fig:Exit_chart_planes}
\end{figure}

We verify the existence of a centre manifold based on $E=\left(0,1/\sqrt{a},0\right)$. Indeed, the equilibrium point $E$ belongs to both lines of equilibria defined by \eqref{eq:equi_invplane1} and \eqref{eq:equi_invplane2}. In fact, its linear stability is given by the matrix
\[
    \left[ 
    \begin{array}{ccc}
    0 & 0 & 0 \\
    0 & 1 & 0 \\
    0 & 0 & 0
    \end{array}\right].
\]
Therefore, by the centre manifold theorem there exists a repelling 2-dimensional local centre manifold $W_{loc}^c(E)$ at the equilibrium $E$ which is tangent to the plane $\left\{r_3=\frac{1}{\sqrt{a}}\right\}$, and it is defined by a smooth function $r_c(\eta_3,\varepsilon_3)$. Moreover, $\{(0,r_c(0,\varepsilon_3),\varepsilon_3) : \varepsilon_3\in[0,\tilde{\gamma}_{\TU{out}}]\}$ coincides with 
the curve defined by \eqref{eq:equi_invplane1}, and analogously $\{(\eta_3,r_c(\eta_3,0),0) : \eta_3\in[0,\alpha_1]\}$ coincides with 
the curve defined by \eqref{eq:equi_invplane1}, for any $\alpha_1<\eta_3^*$.

Recall that the section $\Sigma_1$, c.f. \eqref{eq:transverseFirst}, is independent of $\epsilon$. We can lift it into chart $K_3$ as 
\begin{equation}
\label{eq:S1_K3}
  \tilde{\Sigma}_1=\left\{ (\tilde{\alpha}_1,r_3,\varepsilon_3) : r_3\in[0,\tilde{\beta}_1], \ \varepsilon_3\in[0,\tilde{\gamma}_1]\right\},  
\end{equation}
where $\tilde{\alpha}_1=\alpha_1^{1/6}$, $\tilde{\beta}_1=\beta_1\alpha_1^{1/2}$, and $\tilde{\gamma}_1$
is sufficiently small. We bring all together in the following proposition, where the transition map is constructed. 
\begin{proposition}
    \label{LEMMA:transition_exit}
        For each transversal $\tilde{\Sigma}_1$ (cf. \eqref{eq:S1_K3}) there is $\Sigma_{\TU{out}}$ as in \eqref{eq:Sigma_out_K3}, such that the transition map $\Pi_{\TU{out}}:\Sigma_{\TU{out}}\rightarrow\tilde{\Sigma}_1$ is well defined. Moreover, for each $\eta_3=\eta_{3,0}>0$ fixed, the set 
    \[\left\{ \Pi_{\TU{out}}\left(\eta_{3,0},r_3,\tilde{\gamma}_{\TU{out}}\right) : r_3\in[0,\tilde{\beta}_{\TU{out}}]\right\}
    \subset \left\{ \eta_3=\tilde{\alpha}_1, \ \varepsilon_3=\frac{\eta_{3,0}\tilde{\gamma}_{\TU{out}}}{\tilde{\alpha}_1} \right\}
        \]
        is a segment of length $\mathcal{O}\left(\eta_{3,0}^3\right)$. More precisely, for each $\left(\eta_3,r_3,\tilde{\gamma}_{\TU{out}}\right)\in \Sigma_{\TU{out}}$ we have that
        \[            \Pi_{\TU{out}}\left(\eta_3,r_3,\tilde{\gamma}_{\TU{out}}\right) = \left( \tilde{\alpha}_1, \tilde{r}_1(\eta_3,r_3),\frac{\eta_3\tilde{\gamma}_{\TU{out}}}{\tilde{\alpha}_1} \right),
        \]
   where $\tilde{r}_1=\Theta \left(r_3\eta_3^3\right)$.
\end{proposition}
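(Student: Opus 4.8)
The plan is to parametrise the orbits of \eqref{eq:blowchart2} lying between $\Sigma_{\TU{out}}$ and $\tilde\Sigma_1$ by the coordinate $\eta_3$, exploiting two (near-)first integrals. First I would record that $\eta_3\varepsilon_3=\varepsilon$ is \emph{exactly} conserved: this is immediate from \eqref{eq:coordinates_charts}, and is also visible from the fact that the $\eta_3'$- and $\varepsilon_3'$-brackets in \eqref{eq:blowchart2} are exact negatives of one another. Hence the $\varepsilon_3$-component of the image is forced: starting from $(\eta_{3,0},r_3,\tilde\gamma_{\TU{out}})$ the product equals $\eta_{3,0}\tilde\gamma_{\TU{out}}$, so once $\eta_3$ reaches $\tilde\alpha_1$ one has $\varepsilon_3=\eta_{3,0}\tilde\gamma_{\TU{out}}/\tilde\alpha_1$, exactly as claimed. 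For well-definedness I would show that $\eta_3$ increases strictly along these orbits. Writing the $\eta_3'$-bracket as $\Phi$, its leading part on $\{\eta_3=\varepsilon_3=0\}$ is $1-ar_3^2$ (compare \eqref{eq:invariantplane_second}); since $\Sigma_{\TU{out}}$ sits at small $r_3\le\tilde\beta_{\TU{out}}<1/\sqrt a$, i.e.\ strictly below the repelling centre manifold $W_{loc}^c(E)$ and the equilibrium curve \eqref{eq:equi_invplane2}, one has $\Phi\ge\Phi_{\min}>0$ uniformly, whence $\eta_3'=\tfrac{\eta_3}{6}\Phi\ge c\,\eta_3>0$ for $\eta_3>0$. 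Thus $\eta_3$ grows at least exponentially and reaches $\tilde\alpha_1$ in finite rescaled time, while $r_3$ and $\varepsilon_3$ both decrease, so the orbit never leaves the admissible region and $\Pi_{\TU{out}}$ is well defined for $\eta_{3,0}>0$.

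The heart of the argument is the near-invariance of the quantity $u:=r_3\eta_3^3$, which by \eqref{eq:coordinates_charts} is precisely the original blow-up variable $r$ and hence barely evolves in this ultrafast chart. Since the first bracket of the $r_3'$-equation in \eqref{eq:blowchart2} equals $-\Phi$ while the remaining contribution is of order $\eta_3^6 r_3$, I would compute
\[
\frac{dr_3}{d\eta_3}=\frac{r_3'}{\eta_3'}=-\frac{3r_3}{\eta_3}+\mathcal{O}\!\left(\eta_3^{5}r_3\right),
\]
the correction being uniformly controlled precisely because $\Phi$ is bounded away from $0$. Setting $u=r_3\eta_3^3$ this collapses to $\tfrac{du}{d\eta_3}=\mathcal{O}(\eta_3^{5}u)$, and integrating from $\eta_{3,0}$ up to the fixed value $\tilde\alpha_1$ gives $\ln\!\big(u(\tilde\alpha_1)/u(\eta_{3,0})\big)=\mathcal{O}(\tilde\alpha_1^{6})=\mathcal{O}(1)$. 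Therefore $u$ is preserved up to a $\Theta(1)$ factor, that is $\tilde r_1\,\tilde\alpha_1^{3}=\Theta\big(r_3\eta_{3,0}^{3}\big)$; since $\tilde\alpha_1$ is a fixed constant independent of $\varepsilon$, this is exactly $\tilde r_1=\Theta(r_3\eta_3^3)$.

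Finally I would read off the length of the image segment. For $\eta_{3,0}$ fixed the map $r_3\mapsto\tilde r_1$ is, to leading order, linear with slope $\Theta(\eta_{3,0}^3)$, so the image of $\{r_3\in[0,\tilde\beta_{\TU{out}}]\}$ is an interval $[0,\Theta(\tilde\beta_{\TU{out}}\eta_{3,0}^3)]$ lying in $\{\eta_3=\tilde\alpha_1,\ \varepsilon_3=\eta_{3,0}\tilde\gamma_{\TU{out}}/\tilde\alpha_1\}$, of length $\mathcal{O}(\eta_{3,0}^3)$; choosing the small constants $\tilde\alpha_{\TU{out}},\tilde\beta_{\TU{out}},\tilde\gamma_{\TU{out}}$ appropriately (in particular $\tilde\alpha_{\TU{out}}\le\tilde\alpha_1$) ensures that the image lands inside $\tilde\Sigma_1$. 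The main obstacle I anticipate is quantitative rather than structural: establishing that $\Phi$ stays bounded away from zero \emph{uniformly} along the entire orbit and uniformly as $\eta_{3,0}\to0$. This reduces to an invariant-region argument showing that orbits launched at small $r_3$ cannot cross the repelling centre manifold $W_{loc}^c(E)$ (equivalently the equilibrium curve \eqref{eq:equi_invplane2}) under the $\varepsilon_3$-dependent perturbation of the idealised planar dynamics \eqref{eq:invariantplane_second}. Once this confinement is secured, all the $\mathcal{O}$- and $\Theta$-estimates above hold uniformly and the stated conclusion follows.
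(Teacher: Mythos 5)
Your proposal is correct in substance, but it reaches the key estimate $\tilde r_1=\Theta\left(r_3\eta_3^3\right)$ by a genuinely different route than the paper. The paper (Appendix~\ref{APPENDIX:proof_transition}) works in the time parametrisation: it constructs a forward-invariant region $\Xi$, proves two-sided exponential bounds $r_{3,0}e^{-\tilde Ct}\leq r_3(t)\leq Kr_{3,0}e^{-Ct}$, separately derives two-sided logarithmic bounds on the hitting time, $d_1\ln\left(\tilde\alpha_1/\eta_{3,0}\right)^6-c_1r_{3,0}^3\leq T_+\leq d_2\ln\left(\tilde\alpha_1/\eta_{3,0}\right)^6+c_2r_{3,0}^2$, and then combines them, relying on the $\delta$-dependent constants $C(\delta),\tilde C(\delta)\to 1/2$ and $d_1(\delta),d_2(\delta)\to1$ to land on the exponent $3$. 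You eliminate time altogether: observing that the $\eta_3'$- and $\varepsilon_3'$-brackets in \eqref{eq:blowchart2} are exact negatives (so $\eta_3\varepsilon_3$ is conserved, fixing the $\varepsilon_3$-component of the image, as the paper also uses) and that the leading bracket of $r_3'$ is $-\Phi$ times $r_3/2$ while the remainder is $\mathcal{O}(\eta_3^6 r_3)$, you pass to the graph ODE $dr_3/d\eta_3=-3r_3/\eta_3+\mathcal{O}(\eta_3^5r_3)$ and note that $u=r_3\eta_3^3$ --- which is precisely the blown-down radius $r$ --- satisfies $du/d\eta_3=\mathcal{O}(\eta_3^5u)$, so it changes only by a factor $e^{\mathcal{O}(\tilde\alpha_1^6)}=\Theta(1)$ across the passage. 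This is shorter and conceptually cleaner (it says directly that $r$ is nearly frozen in this chart), whereas the paper's time-based bounds buy additional information that your route does not produce: the explicit hitting-time estimates \eqref{eq:hitting_bounds} and decay constants are reused in the paper's Step~6 to establish the exponential contraction rate of $\Pi_{\TU{out}}$ along vertical fibres, which feeds into Lemma~\ref{LEMMA:transition4}.

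Two small caveats. First, the confinement you flag as the ``main obstacle'' is exactly the paper's Step~1, and it closes without difficulty: on $\{r_3=\beta_{\TU{out}}\}$ with $\beta_{\TU{out}}<1/\sqrt a$ one checks $r_3'<0$ (the paper's computation via \eqref{eq:dummy3}), so orbits stay in $\{r_3\leq\beta_{\TU{out}}\}$ and $\Phi\geq 1-a\beta_{\TU{out}}^2-\mathcal{O}(\delta^6+\gamma_{\TU{out}}^3)>0$ uniformly; a short continuity/bootstrap remark (you need $\tilde\alpha_1$ small enough that the $\eta_3^6\Psi$ correction cannot overturn the sign of $r_3'$) would make your argument self-contained rather than deferred. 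Second, your proof only treats $\eta_{3,0}>0$; the section $\Sigma_{\TU{out}}$ in \eqref{eq:Sigma_out_K3} includes $\eta_3=0$, where the orbit never reaches $\tilde\Sigma_1$, and the paper extends $\Pi_{\TU{out}}$ there by continuity, setting $\Pi_{\TU{out}}(0,r_3,\tilde\gamma_{\TU{out}})=(\tilde\alpha_1,0,0)$ --- your estimates deliver this extension for free (both the $\tilde r_1$- and $\varepsilon_3$-components tend to $0$ as $\eta_{3,0}\to0$), but it should be stated.
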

\begin{proof}
    See Appendix~\ref{APPENDIX:proof_transition}, and Figure~\ref{fig:Exit_chart} therein. 
\end{proof}

The following result is a consequence of Propositions \ref{LEMMA:entrance_transition}, \ref{LEMMA: transit_fold_blowup}, and \ref{LEMMA:transition_exit}.
\begin{lemma}
    \label{LEMMA:transition4}
        The transition map $\Pi_{41}:\Sigma_4\rightarrow\Sigma_1$ is well defined for every $\epsilon$ sufficiently small. Moreover, the transition map $\Pi_{41}$ is a contraction with contraction rate $\mathcal{O}\left( e^{-c_4/\sqrt{\epsilon}} \right)$ for some $c_4>0$. More precisely, for each $(\omega,\beta_4)\in\Sigma_4$ we have that
        \[
            \Pi_{41}(\omega,\beta_4)=\left(
            \alpha_1,\hat{r}_1           \right),
        \]
        for each $\epsilon$ sufficiently small, where $\hat{r}_1=\Theta\left( \epsilon^{3/2} \right)$. Equivalently, in the original $\theta$ variable,
         \[
            \Pi_{41}(\theta,\beta_4)=\left(
            \pi/4+ \alpha_1, \hat{r}_1
            \right)
        \]
\end{lemma}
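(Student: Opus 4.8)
The plan is to prove Lemma~\ref{LEMMA:transition4} by assembling the three intermediate maps according to the factorisation
\[
\Pi_{41} = \Phi \circ \Pi_{\TU{out}} \circ T_{23} \circ \Pi_{\TU{tran}} \circ T_{12} \circ \Pi_{\TU{in}} \circ \Phi^{-1},
\]
and then tracking how the conserved blow-up quantity and the three contraction rates propagate through the charts. First I would record that, for $\epsilon=\varepsilon^2>0$, the blow-up $\Phi$ and the chart changes $T_{12},T_{23}$ are diffeomorphisms away from the exceptional fibre $\{\eta=0\}$, and that by construction $\Phi^{-1}$ carries $\Sigma_4$ onto $\tilde{\Sigma}_4$ (cf.~\eqref{eq:section 4_ chart1}) while $\Phi$ carries $\tilde{\Sigma}_1$ (cf.~\eqref{eq:S1_K3}) onto $\Sigma_1$. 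Well-definedness of $\Pi_{41}$ then follows from Propositions~\ref{LEMMA:entrance_transition}, \ref{LEMMA: transit_fold_blowup} and \ref{LEMMA:transition_exit} once the intermediate sections are checked to chain: the image $\Pi_{\TU{in}}(\tilde{\Sigma}_4)\subset\Sigma_{\TU{in}}$ is sent by $T_{12}$ into the domain \eqref{eq:Sigma_in_K2} of $\Pi_{\TU{tran}}$, and $T_{23}$ sends $\Pi_{\TU{tran}}(\Sigma_{\TU{in}})\subset\Sigma_{\TU{out}}$ into the domain \eqref{eq:Sigma_out_K3} of $\Pi_{\TU{out}}$ -- these are precisely the identifications used to express $\Sigma_{\TU{in}}$ and $\Sigma_{\TU{out}}$ in both charts.

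The central bookkeeping step is to express every chart-dependent quantity in terms of $\epsilon$ through the conserved relation $\eta\bar{\varepsilon}=\varepsilon=\sqrt{\epsilon}$. On $\tilde{\Sigma}_4$ one has $\eta_1=\tilde{\beta}_4$, hence $\varepsilon_1=\sqrt{\epsilon}/\tilde{\beta}_4=\Theta(\sqrt{\epsilon})$; in chart $K_2$ the scaling identifies $\eta_2=\varepsilon=\sqrt{\epsilon}$; and after $T_{23}$ the relations $\eta_3=\eta_2(\omega_2^*+\alpha_{\TU{out}})^{1/6}$ and $\varepsilon_3=(\omega_2^*+\alpha_{\TU{out}})^{-1/6}$ give $\eta_3=\Theta(\sqrt{\epsilon})$ on $\Sigma_{\TU{out}}$. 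For the order of the image I would push the output of $\Pi_{\TU{tran}}$, namely $r_2=r_2^*+\Theta(\eta_2^4)=\Theta(1)$, through $T_{23}$ to obtain $r_3=r_2(\omega_2^*+\alpha_{\TU{out}})^{-1/2}=\Theta(1)$, and then apply Proposition~\ref{LEMMA:transition_exit} to get $\tilde{r}_1=\Theta(r_3\eta_3^3)=\Theta(\epsilon^{3/2})$ on $\tilde{\Sigma}_1$. Pulling back through $\Phi$, where $\eta_3=\tilde{\alpha}_1=\alpha_1^{1/6}$ is constant and $r=\eta_3^3 r_3$, yields $\hat{r}_1=\alpha_1^{1/2}\,\tilde{r}_1=\Theta(\epsilon^{3/2})$, which is the claimed order.

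For the contraction rate I would multiply the three individual Lipschitz estimates, using that $\Phi^{\pm1}$, $T_{12}$ and $T_{23}$ restrict to maps with $\Theta(1)$ Lipschitz constants on the compact sections, since their singular factors $1/\varepsilon_1^6$, $1/\omega_2^{1/2}$, etc.\ are evaluated at the fixed values $\varepsilon_1=\gamma_{\TU{in}}$ and $\omega_2=\omega_2^*+\alpha_{\TU{out}}$. Proposition~\ref{LEMMA:entrance_transition} contributes $\mathcal{O}(e^{-\tilde{c}_1/\varepsilon_1})=\mathcal{O}(e^{-\tilde{c}_1\tilde{\beta}_4/\sqrt{\epsilon}})$, Proposition~\ref{LEMMA: transit_fold_blowup} the even stronger $\mathcal{O}(e^{-\tilde{c}_2/\eta_2^6})=\mathcal{O}(e^{-\tilde{c}_2/\epsilon^3})$, and Proposition~\ref{LEMMA:transition_exit} the polynomial factor $\Theta(\eta_3^3)=\Theta(\epsilon^{3/2})$. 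Since each factor is below $1$ for small $\epsilon$, their product is bounded by the weakest one, so the composite contraction rate is $\mathcal{O}(e^{-c_4/\sqrt{\epsilon}})$ for any $c_4<\tilde{c}_1\tilde{\beta}_4$. Finally, writing $\theta=\pi/4+\omega$ and using $\omega=\alpha_1$ on $\Sigma_1$ produces both stated forms of $\Pi_{41}$.

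I expect the main obstacle to be precisely the uniformity of this bookkeeping: one must verify that the $\epsilon$-dependent powers appearing in the singular chart changes do not amplify the chart-local estimates when translated back to the $(\theta,r)$-coordinates, and in particular that the slowest contraction is genuinely the entry-chart rate $e^{-c_4/\sqrt{\epsilon}}$ rather than an artefact of composing $T_{12}$ and $T_{23}$. Confirming that the sections chain correctly -- so that codomains land inside the next domain -- and that the extension to the fibre $\epsilon=0$ remains consistent is the other point requiring care.
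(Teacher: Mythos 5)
Your proposal is correct and follows essentially the same route as the paper's own proof: the identical factorisation $\Pi_{41}=\Phi\circ\Pi_{\TU{out}}\circ T_{23}\circ\Pi_{\TU{tran}}\circ T_{12}\circ\Pi_{\TU{in}}\circ\Phi^{-1}$, the same chaining of Propositions~\ref{LEMMA:entrance_transition}, \ref{LEMMA: transit_fold_blowup} and \ref{LEMMA:transition_exit}, and the same bookkeeping via the conserved quantity $\eta\bar{\varepsilon}=\varepsilon=\sqrt{\epsilon}$, giving $\varepsilon_1=\Theta(\sqrt{\epsilon})$, $\eta_2=\sqrt{\epsilon}$, $\eta_3=\Theta(\sqrt{\epsilon})$ and hence $\hat{r}_1=\Theta\left(\epsilon^{3/2}\right)$ exactly as in the paper. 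The only (harmless) divergence is in the contraction estimate: since all factors in the composed Lipschitz product are $\mathcal{O}(1)$, the product can be bounded by the \emph{strongest} factor $e^{-\tilde{c}_2/\eta_2^6}=e^{-\tilde{c}_2/\epsilon^3}$ from the scaling chart, which is what the paper extracts and later uses in Theorem~\ref{THM:PoincareMap}(ii), whereas your bound by the entry-chart factor $e^{-c_4/\sqrt{\epsilon}}$ is weaker but matches the lemma's statement verbatim, so nothing is lost for the present statement.
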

\begin{proof}
    We consider 
    \[
        \Pi_{41}=\Phi\circ\Pi_{\TU{out}}\circ T_{23}\circ \Pi_{\TU{tran}}\circ T_{12}
        \circ\Pi_{\TU{in}}\circ \Phi^{-1},
    \]
    which is well defined as a map from $\Sigma_4$ to $\Sigma_1$ for every $\epsilon$ sufficiently small. Since each $\Pi_{\TU{in}}, \Pi_{\TU{tran}}$, and $\Pi_{\TU{out}}$ are contractions with contraction rates $\mathcal{O}\left( e^{-\tilde{c}_1/\varepsilon_1} \right)$, $\mathcal{O}\left( e^{-\tilde{c}_2/\eta_2^6} \right)$, and $\mathcal{O}\left( \eta_3^3 \right)$, respectively, then the composition has a rate of contraction $\mathcal{O}\left( e^{-\tilde{c}_4/{\varepsilon_1}} \right)$ for some $\tilde{c}_4>0$. From \eqref{eq:coordinates_charts}, it follows that $\Pi_{41}$ is a $\mathcal{O}\left(  e^{-c_4/\epsilon^3}\right)$.
    
    In order to get the expression of $\Pi_{41}$, and for the reader's convenience, we recall from \eqref{eq:coordsChange_second} the change of charts $T_{12}$ and $T_{23}$ between charts $K_1$ and $K_2$, and between $K_2$ and $K_3$, respectively,
    \begin{align*}
        T_{12}(\omega_1,\eta_1,\varepsilon_1)=
         \left(  
        \frac{\omega_1}{\varepsilon^6_1},\frac{1}{\varepsilon_1^3},\eta_1\varepsilon_1
        \right), 
        \quad
        T_{23}(\omega_2,r_2,\eta_2)=
         \left(  
        \eta_2\omega_2^{1/6}, \frac{r_2}{\omega_2^{1/2}},\frac{1}{\omega_2^{1/6}}
        \right),
    \end{align*}
and the transition maps $\Pi_{\TU{in}},\Pi_{\TU{tran}},$ and $\Pi_{\TU{out}}$ from Propositions \ref{LEMMA:entrance_transition}, \ref{LEMMA: transit_fold_blowup}, and \ref{LEMMA:transition_exit},
\begin{align*}
    \Pi_{\TU{in}}(\omega_1,\tilde{\beta}_4,\varepsilon_1)= &
    \left( 
     \tilde{\omega}_1(\omega_1,\varepsilon_1)
    +\mathcal{O} \left( e^{-\tilde{c}_1/\varepsilon_1}\right), 
    \frac{\varepsilon_1\tilde{\beta_4}}{\gamma_{\TU{in}}},\gamma_{\TU{in}}
    \right), \\
    \Pi_{\TU{tran}}(\omega_2,\tilde{\beta}_{\TU{in}},\eta_2) = &
    \left(
    \omega_2^*+\alpha_{\TU{out}},r_2^*+\Theta(\eta_2^4),\eta_2
    \right), \\
    \Pi_{\TU{out}}(\eta_3,r_3,\tilde{\gamma}_{\TU{out}})= &
    \left( 
    \tilde{\alpha}_1,\tilde{r}_1(\eta_3,r_3), \frac{\eta_3 \tilde{\gamma}_{\TU{out}}}{\tilde{\alpha}_1}
    \right),
\end{align*}
where $\tilde{r}_1(\eta_3,r_3)=\Theta\left(r_3\eta_3^3 \right)$. Therefore, for any $(\omega_1,\tilde{\beta}_4,\varepsilon_1)\in\tilde{\Sigma}_1$,
\[
    (T_{12}\circ\Pi_{\TU{in}})(\omega_1,\tilde{\beta}_4,\varepsilon_1)=
    \left(
    \frac{\tilde{\omega}_1}{\gamma_{in}^6}+\mathcal{O} \left(e^{-\tilde{c}_1/\varepsilon_1} \right), \frac{1}{\gamma_{\TU{in}}^3},\tilde{\beta}_4\varepsilon_1
    \right).
\]
It follows that
\[
(\Pi_{\TU{tran}}\circ T_{12}\circ\Pi_{\TU{in}})(\omega_1,\tilde{\beta}_4,\varepsilon_1)= \left(\omega_2^*+\alpha_{\TU{out}},r_2^*+\Theta(\varepsilon_1^4),\tilde{\beta}_4\varepsilon_1 \right).
\]
Hence,
\[
(T_{23}\circ \Pi_{\TU{tran}}\circ T_{12}\circ\Pi_{\TU{in}})(\omega_1,\tilde{\beta}_4,\varepsilon_1)=
\left( \tilde{\beta}_4\varepsilon_1(\omega_2^*+\alpha_{\TU{out}})^{1/6}, \frac{r_2^*+\Theta(\varepsilon_1^4)}{(\omega_2^*+\alpha_{\TU{out}})^{1/2}}, \frac{1}{(\omega_2^*+\alpha_{\TU{out}})^{1/6}} \right).
\]
We conclude that
\[
(\Pi_{\TU{out}}\circ T_{23}\circ \Pi_{\TU{tran}}\circ T_{12}\circ\Pi_{\TU{in}})(\omega_1,\tilde{\beta}_4,\varepsilon_1)=
\left( 
\tilde{\alpha}_1, \tilde{r}_1(\omega_1,\varepsilon_1), \frac{\tilde{\beta}_4\varepsilon_1}{\tilde{\alpha}_1}\right)\in K_3,
\]
where 
\[
\tilde{r}_1(\omega_1,\varepsilon_1)\equiv \tilde{r}_1\left( 
\tilde{\beta}_4(\omega_2^*+\alpha_{\TU{out}})^{1/6}\varepsilon_1, \frac{r_2^*+\Theta\left( \varepsilon_1^4 \right)}{(\omega_2^*+\alpha_{\TU{out}})^{1/2}}
\right).
\]
Observe then that $\tilde{r}_1(\omega_1,\varepsilon_1)=\Theta\left( \varepsilon_1^3 \right)$. Recall that $\omega=\eta_3^6, r=\eta_3^3r_3$, and $\varepsilon=\eta_3\varepsilon_3$. Doing the substitution, and since $\varepsilon=\sqrt{\epsilon}$, we obtain
\[
    \Pi_{41}(\omega,\beta_4)=\left(\alpha_1, \hat{r}_1(\omega,\epsilon) \right)\in\Sigma_1,
\]
where $\hat{r}_1(\omega,\epsilon)=\Theta(\epsilon^{3/2})$, and the result follows.
\end{proof}

With the last transition map constructed we can finally prove Theorem~\ref{THM:PoincareMap}.
\begin{proof}[Proof of Theorem~\ref{THM:PoincareMap}]
Part (i) follows as a direct consequence of Lemmas~\ref{LEMMA:Transition1}, \ref{LEMMA:transition2}, \ref{LEMMA:transition3}, and \ref{LEMMA:transition4}. In order to get part (ii), recall that for every $\epsilon$ sufficiently small
\begin{align*}
    \Pi_{12}(z_1)= & 
    \left( \theta^*+\alpha_2,\phi_0(\theta^*+\alpha_2) \right) + \Theta(\epsilon) \\
    \Pi_{23}(z_2)= & 
    \left( \theta^*-\alpha_3,r_{\rho_*}(\theta^*-\alpha_3) \right) +\Theta(\epsilon^{2/3}) \\
    \Pi_{34}(z_3)= & 
    \left( \pi/4+\Theta(\epsilon^{3/2}),\beta_4)\right)\\
    \Pi_{41}(z_4)= & 
    \left( \frac{\pi}{4}+ \alpha_1,\hat{r}_1(z_4)\right), 
\end{align*}
for any $z_i\in\Sigma_i$, $i=1,2,3,4$, where in particular $\hat{r}_1(z_4)\equiv \hat{r}_1(\omega,\epsilon)$ as given in Lemma~\ref{LEMMA:transition4}. By performing the corresponding compositions, it yields that $\Pi^\epsilon(z_1)=(\pi/4+\alpha_1, \bar{r}_1(z_1))$, where $\bar{r}_1(z_1)= \Theta\left( \epsilon^{3/2} \right)$. The order of the contraction rate is inherited from the smallest of the $\Pi_{ij}$, being here $\mathcal{O}\left( e^{-c/\epsilon^3} \right)$, for some $c>0$.

Part (iii) follows directly from above since we have
\[
    z_{\epsilon}=\Pi^{\epsilon}(z_\epsilon)=z_0+\Theta\left(  \epsilon^{3/2}\right).
\]

Finally, part (iv) is a consequence of the following reasoning: given $z_{\epsilon}$ the fixed point of $\Pi^{\epsilon}$, there is $\rho_\epsilon\in(0,\rho^*)$ such that $z_{\epsilon}\in \mathcal{F}_{\epsilon}(\rho_{\epsilon})$. Therefore, it is attracted exponentially fast to $\mathcal{S}_{\epsilon}^1$. Once the trajectory follows $\mathcal{S}^2_{\epsilon}$, Remark~\ref{RMK:timescale1} indicates that the time spent near $\mathcal{S}^1_{\epsilon}$ is $\Theta(1/\epsilon)$. When the trajectory hits $\Sigma_3$, say in a point $\tilde{z}\in \Sigma_3$, there is $\tilde{\rho}_\epsilon>\rho_*$ such that $z\in\mathcal{F}_{\epsilon}(\tilde{\rho}_\epsilon)$, and thus is attracted exponentially fast to $\mathcal{S}^1_\epsilon$. Due to Remark~\ref{RMK:timescale2}, the time spent near $\mathcal{S}^1_{\epsilon}$ is $\Theta(\epsilon^{-3/2})$.
\end{proof}

\section{Proof of the main theorem}
\label{SEC:MainProof}
In this last section, we conclude with the proof of Theorem~\ref{THM:limit cycle_ original coordinates}. While the basic structure is a translation of Theorem~\ref{THM:PoincareMap} into the coordinates $x$ and $y$, the calculation of the time scales near each  branch given by the curves $\sigma_i$, $i=1,2,3,4$ requires some careful analysis.

\begin{proof}[Proof of Theorem~\ref{THM:limit cycle_ original coordinates}]
   Recall that the coordinates $(\theta,r)$ were obtained from the change of variables \eqref{eq: ChangeCoords_First} and \eqref{eq:coordsChange_second}, so that
    \begin{equation}
    \label{eq:coordfinal}
        x=\frac{\cos\theta}{\sqrt{\epsilon} r}, \qquad y=\frac{\sin\theta}{\sqrt{\epsilon} r}.
    \end{equation}
Let $z_{\epsilon}=\left(\bar{\alpha}_1,r_\epsilon\right)$, where $\bar{\alpha}_1=\pi/4+\alpha_1$, be the fixed point of $\Pi^{\epsilon}$ as above. Therefore, there exists a unique $\rho_\epsilon>0$ such that $z_{\epsilon}\in\mathcal{F}_0(\rho_{\epsilon})$, which implies that $r_{\epsilon}=\rho_{\epsilon}\sin\bar{\alpha}_1$, and due to Lemma~\ref{LEMMA:transition4} we have that $\rho_{\epsilon}=\Theta\left( \epsilon^{3/2}\right)$. Hence, by replacing $r=\rho_{\epsilon}\sin\theta$ in \eqref{eq:coordfinal}, the curve $\sigma_1$ is obtained.

The curves $\sigma_2$ and $\sigma_3$ are obtained similarly by replacing $r=\phi_0(\theta)$ (cf. \eqref{eq:criticalmanifolds}) and $r=\rho_*\sin\theta\equiv \frac{1}{2\sqrt{a}}\sin\theta$, respectively. For $\sigma_{4}$, we substitute $\theta=\pi/4$.

In order to obtain the time scales in each branch, recall from \eqref{eq:time_change} and \eqref{eq:rescale} that our time variable $t_2$ for the system \eqref{eq:MainBrusselator} is given by
\begin{equation}
\label{eq:t2_last}
    t_2(t)=\int_{0}^t{\frac{1}{r^2(s)}ds}.
\end{equation}
Whenever $r(t)$ is bounded away from $r=0$, the time scale analysis becomes simple. However, this is not the case near $\sigma_1$. Let us consider the following statement.

\noindent\textit{Claim: for all positive $\epsilon$ and $\rho$ sufficiently small, the vector field of system \eqref{eq:MainBrusselator} on the curves
$\{(\theta,r_{\rho}(\theta)): \theta\in[\alpha_1,\pi/2], \ \bar{\alpha}_1>\pi/4\}$ points towards their epigraphs $\{(\theta,r) : r\geq r_{\rho}(\theta)\}$.}

Indeed, for each $\rho>0$ a parametrisation of the fast fibre is given by $\theta \mapsto (\theta, r_{\rho}(\theta))$. A normal vector pointing in the direction of its epigraph at the point $(\theta,\rho\sin\theta)$ is given by $\vec{n}=(-\rho\cos\theta,1)$. Denote by $F\equiv F(\theta,r)$ the vector field defining \eqref{eq:MainBrusselator}, for which it is sufficient to show that
\[
    \left\langle F(\theta,\rho\sin\theta), \vec{n}\right\rangle >0.
\]
By a straightforward calculation, we have that on $(\theta,r_{\rho}(\theta))$
\[
     \left\langle F, \vec{n}\right\rangle = \epsilon \left[
     \rho^3\sin^2\theta (\sin\theta-\cos\theta) -\sqrt{\epsilon}a\rho^4\sin^3\theta
     \right],
\]
so that $\left\langle F, \vec{n}\right\rangle >0$ if and only if 
\[
    \sqrt{\epsilon}\rho < \frac{\sin\theta-\cos\theta}{a\sin\theta}
\]
for all $\theta\in[\bar{\alpha}_1,\pi/2]$. A sufficient condition for the claim to hold is to take $\epsilon$ and $\rho$ small enough so that $\sqrt{\epsilon}\rho < (\sin\bar{\alpha}_1-\cos\bar{\alpha}_1)/a$.

As a consequence of the proven claim, we have that the evolution of $r(s)$ as given by \eqref{eq:MainBrusselator}, with $r(0)=z\in\Sigma_1$, satisfies $r(s)\geq \rho \sin\theta(s)$ for all $s\geq 0$, see Figure~\ref{fig:finalproof}.

\begin{figure}[ht]
     \centering      \begin{overpic}[width=.5\linewidth]{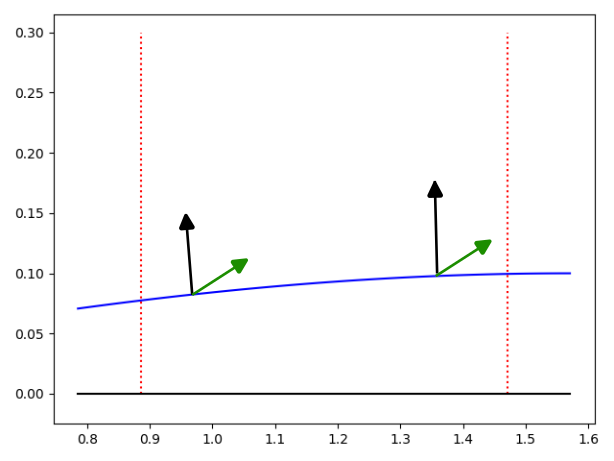}
     \put(20,7){\scriptsize$\alpha_1$}
     \put(75,7){\scriptsize$\pi/2-\delta$}
      \put(47,25){\color{blue}\scriptsize $\rho_{\epsilon}\sin\theta$}
        \put(70,47){\scriptsize $\vec{n}$}
        \put(29,41){\scriptsize $\vec{n}$}
        \put(37,34){\scriptsize $F$}
        \put(78,37){\scriptsize $F$}
     \end{overpic}  
     \hfill
    \caption[Lower bound for r]{A representation of the lower bound estimates for $r(s)$. In blue, the graph of $r_{\rho_{\epsilon}}(\theta)=\rho_\epsilon\sin\theta$ is depicted, on which the vector field $F$ (green) points towards the epigraph of $r_{\rho_{\epsilon}}$. As a reference,  normal vectors (black) to two different points on the curve are shown.}
        \label{fig:finalproof}
\end{figure}

Recall from part (iv) in Theorem~\ref{THM:PoincareMap} that near $\hat{\sigma}_1$, i.e. $\{r=0\}$, the time scale is $t_2=\Theta(1)$, so that in particular $t_2\geq M_1$ for some $M_1>0$. From \eqref{eq:t2_last} and the claim above, it follows that there exists $M_2>0$ such that 
\[
    M_1\leq \int_0^t\frac{ds}{\rho_\epsilon^2\sin^2\theta(s)}\leq 
    \frac{t}{M_2\epsilon^{3}}.
\]

Therefore, the time scale near $\sigma_1$ is $\Omega\left( \epsilon^3 \right)$, for some $\tilde{\delta}>0$.

We now show that the time scale near $\sigma_1$ is $\mathcal{O}\left( \epsilon^{3} \right)$. For this, we first find an upper bound for the dynamics of $r(s)$. From the equation for $r$, bounding from above the first, second, and therm terms in \eqref{eq:MainBrusselator} we have that for $\theta\in [\pi/4+\delta,\pi/2-\delta]$ for $\delta$ arbitrarily small
\[
\dot{r}\leq r- (M-\epsilon)r^3,
\]
where $M=a\cos(\pi/2-\delta)[\sin(\pi/4+\delta)-\cos(\pi/4+\delta)]$ and $M\rightarrow 0$ as $\delta\rightarrow0$. For simplicity, let $F_{\epsilon}=M-\epsilon>0$. It follows that for $\delta$ sufficiently small, $1-F_{\epsilon}r^2(s)>0$ for all $s$, and thus
\[
\int_{r_0}^{r}{\frac{d\omega}{\omega(1-F_{\epsilon}\omega^2)}}\leq s.
\]
The integral on the right can be solved by partial fractions method, by using the expansion
\[
    \frac{1}{\omega(1-\sqrt{F_\epsilon}) (1+\sqrt{F_\epsilon})} =
    \frac{1}{\omega} + \frac{\sqrt{F_\epsilon}}{2(1-\sqrt{F_\epsilon}\omega)}- \frac{\sqrt{F_\epsilon}}{2(1+\sqrt{F_\epsilon}\omega)}.
\]
Therefore, 
\[
\ln\left(\frac{r}{r_0}\right)+ \frac{1}{2}\ln \left(\frac{1-F_\epsilon r_0^2}{1-F_\epsilon r^2}\right)\leq s,
\]
and by multiplying by $2$ and taking the exponential function we obtain
\[
    r^2(1-F_\epsilon r_0^2) \leq 
    (r_0^2-F_\epsilon r_0^2 r_2^2)e^{2s},
\]
from where it follows that 
\begin{equation}
\label{eq:bound_r}
r^2(s)\leq \frac{r_0^2 e^{2s}}{1+r_0^2 F_\epsilon (e^{2s}-1)}
\end{equation}

As above, we consider the initial condition to be the fixed point of the Poincaré map $\Pi^{\epsilon}$, so that $r_0=\rho_{\epsilon}\sin\bar{\alpha}_1$. By using \eqref{eq:bound_r} in \eqref{eq:t2_last},
\[
    t_2\geq \frac{1}{r_0^2}\int_{0}^t{\left(\frac{1-r_0^2 F_\epsilon}{e^{2s}}+r_0^2F_\epsilon\right)ds}\geq
    \frac{(1-r_0^2 F_\epsilon)(1-e^{-2t})}{2r_0^2}.
\]
Since $t_2=\mathcal{O}(1)$ near $\{r=0\}$, there is $M>0$ such that
\[
    1-e^{-2t}\leq \frac{2Mr_0^2}{1-r_0^2 F_\epsilon}.
\]
Hence,
\[
    -2t \geq \ln\left( 1-\frac{2Mr_0^2}{1-r_0^2F_\epsilon} \right)\geq - \frac{2Mr_0^2}{1-r_0^2(F_\epsilon+2M)}.
\]
Therefore,
\[
    t\leq \frac{M}{1-r_0^2(F_\epsilon+2M)} r_0^2.
\]
Since $r_0^2=\mathcal{O}\left( \epsilon^{3}\right)$, the same holds for the time scale near $\sigma_1$.

Last, for $\sigma_2, \sigma_3$, and $\sigma_4$ we have that $r$ is bounded away from $0$ and bounded from above, so that from part (iv) in Theorem~\ref{THM:PoincareMap} and \eqref{eq:t2_last} it follows that
\begin{align*}
    \textup{near } \sigma_2, \quad & \quad t= \Theta (\epsilon^{-1}) \\
    \textup{near } \sigma_3, \quad& \quad t= \Theta(1) \\
   \textup{near } \sigma_4, \quad & \quad t= \Theta\left( \epsilon^{-3/2}\right).
\end{align*}
This finishes the proof.
\end{proof}

As mentioned at the end of Section~\ref{SUBSEC:time_scale_sep}, Theorem~\ref{THM:limit cycle_ original coordinates} can be reformulated in terms of the rescaled variables $(\bar{x},\bar{y})=(\sqrt{\epsilon}x,\sqrt{\epsilon}y)$ as given in the following corollary, whose proof is a direct consequence of Theorems~\ref{THM:limit cycle_ original coordinates} and \ref{THM:PoincareMap}.
\begin{corollary}
\label{CORO:rescaled}
    For each $\epsilon>0$ sufficiently small, system \eqref{eq:MainBrusselator}, after performing the rescaling $(\bar{x},\bar{y})=\sqrt{\epsilon}\;(x,y)$, admits a unique attracting limit cycle $\bar{\gamma}^\epsilon$ which exhibits a time scale separation near the cycle composed of four curves parameterised by the functions $\bar{\sigma}_i=\sqrt{\epsilon}\sigma_i$, $i=1,2,3,4$, where each $\sigma_i$ is as in Theorem~\ref{THM:limit cycle_ original coordinates}. The time scale near each $\bar{\sigma}_i$ corresponds to that near $\sigma_i$. Furthermore, 
    \begin{equation}
    \label{eq:limit_cycle}
        \lim_{\epsilon\rightarrow 0} \TU{dist}_H\left( 
        (\bar{\sigma}_2\cup \bar{\sigma}_3 \cup \bar{\sigma}_4)
        \cap \left\{\bar{y}\leq \frac{1}{\sqrt{2}\rho_\epsilon}\right\}, \bar{\gamma}^{\epsilon}
        \right)=0,
    \end{equation}
where $\TU{dist}_H$ denotes the Hausdorff semidistance between sets. 
\end{corollary}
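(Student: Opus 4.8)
The plan is to separate the two kinds of assertions in the corollary. The qualitative statements (existence, uniqueness, attractivity of $\bar\gamma^\epsilon$, the identification of the approximating curves $\bar\sigma_i$, and of the four time scales) I would obtain by a soft conjugacy argument, while the geometric convergence \eqref{eq:limit_cycle} requires transporting the Hausdorff convergence of Theorem~\ref{THM:PoincareMap}(i) through the singular change of variables to the line at infinity. For the soft part, I would note that for each fixed $\epsilon>0$ the rescaling $R_\epsilon:(x,y)\mapsto(\sqrt\epsilon\,x,\sqrt\epsilon\,y)$ is a linear diffeomorphism of $\mathbb{R}^2$ that does not touch the time variable; hence it conjugates the flow of \eqref{eq:fastDynamics} to that of the rescaled system and sends orbits to orbits. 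Consequently $\bar\gamma^\epsilon=R_\epsilon(\gamma^\epsilon)$ is the unique attracting limit cycle, the approximating curves are exactly $\bar\sigma_i=R_\epsilon(\sigma_i)=\sqrt\epsilon\,\sigma_i$, and since hitting times in Definition~\ref{DEF:timescale} are unchanged by a purely spatial rescaling, the time scale near each $\bar\sigma_i$ equals that near $\sigma_i$. This establishes everything except \eqref{eq:limit_cycle}.

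For \eqref{eq:limit_cycle} I would work in the $(\theta,r)$ coordinates of \eqref{eq:MainBrusselator}, where Theorem~\ref{THM:PoincareMap}(i) gives $\mathrm{dist}_H(\hat\sigma,\gamma_\epsilon)\to0$ and in particular the one-sided bound $\mathrm{dist}_H(\hat\sigma_2\cup\hat\sigma_3\cup\hat\sigma_4,\gamma_\epsilon)\to0$. The composite change of variables \eqref{eq:coordfinal} followed by $R_\epsilon$ reads $(\bar x,\bar y)=\Psi(\theta,r):=(\cos\theta/r,\sin\theta/r)$, a diffeomorphism away from $r=0$, under which $\bar\gamma^\epsilon=\Psi(\gamma_\epsilon)$ and $A_\epsilon:=(\bar\sigma_2\cup\bar\sigma_3\cup\bar\sigma_4)\cap\{\bar y\le 1/(\sqrt2\rho_\epsilon)\}=\Psi(B_\epsilon)$, where $B_\epsilon=(\hat\sigma_2\cup\hat\sigma_3\cup\hat\sigma_4)\cap\{r\ge\sqrt2\,\rho_\epsilon\sin\theta\}$; one checks that the truncation level is exactly what pins the lower endpoint $r=\rho_\epsilon$ of $\hat\sigma_4$ and keeps all of $\hat\sigma_3$. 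I would then split $A_\epsilon$ by height. On the compact part $\{\bar y\le C\}$, equivalently $r\ge r_0>0$ uniformly in $\epsilon$, which contains all of $\bar\sigma_3$, the fold region, and the lower portions of $\bar\sigma_2,\bar\sigma_4$, the map $\Psi$ is uniformly bi-Lipschitz, so the convergence transfers verbatim; the only deviations are the $\Theta(\epsilon)$ gap to $\mathcal{S}^2_\epsilon$, the $\Theta(\epsilon^{2/3})$ loss at the fold from Lemma~\ref{LEMMA:transition2}, and the $\Theta(\epsilon^{3/2})$ gap to $\mathcal{S}^1_\epsilon$, all of which vanish.

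The genuine work is the unbounded part, where $\bar y$ ranges up to $1/(\sqrt2\rho_\epsilon)\to\infty$ and the preimages accumulate on $\{r=0\}$, so $\Psi$ is no longer uniformly continuous and Theorem~\ref{THM:PoincareMap}(i) cannot be used as a black box. Here I would use quantitative tracking: the cycle lies within $\mathcal{O}(e^{-c/\epsilon})$ of the attracting manifolds $\mathcal{S}^2_\epsilon=W^c_\epsilon(P_1)$ and $\mathcal{S}^1_\epsilon$ in the $(\theta,r)$ metric (from the contraction rates proved along the way), while Propositions~\ref{PROP:expansionSlowManifold} and \ref{PROP:S1_approximation} locate these manifolds relative to $\phi_0$ and to $\{\theta=\pi/4\}$. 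On $B_\epsilon$ one has $r\ge\sqrt2\,\rho_\epsilon\sin\theta=\Omega(\epsilon^{3/2})$, so $\|D\Psi\|=\Theta(r^{-2})=\mathcal{O}(\epsilon^{-3})$ is amplified only polynomially. Comparing at fixed $\theta$ along $\bar\sigma_2$ gives $\bar y_{\mathrm{cyc}}-\bar y=\sin\theta\,(\phi_0-\phi_\epsilon)/(\phi_0\phi_\epsilon)$; using $\phi_1=\mathcal{O}(\phi_0/\bar y^2)$ near $\pi/2$ this is $\mathcal{O}(\epsilon/\bar y)$, uniformly small up to the cap, and the exponential part contributes at most $\mathcal{O}(\epsilon^{-3}e^{-c/\epsilon})\to0$. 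Along $\bar\sigma_4$ the deviation from the diagonal is $\mathcal{O}(\epsilon^{3/2})$ via $\theta_\epsilon(r)=\pi/4+\tfrac{r}{\sqrt2}\epsilon^{3/2}+o(\epsilon^{3/2})$. I would conclude that every point of $A_\epsilon$ is within $o(1)$ of $\bar\gamma^\epsilon$, which is \eqref{eq:limit_cycle}.

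Two points close the argument. First, only the semidistance, and not the full Hausdorff distance, can hold: the ultrafast branch $\bar\sigma_1$ of $\bar\gamma^\epsilon$ sits at height $\bar y\approx 1/\rho_\epsilon$, exceeding the truncation $1/(\sqrt2\rho_\epsilon)$ by an amount $\to\infty$, so points of the cycle on that branch stay far from $A_\epsilon$; excluding $\bar\sigma_1$ and truncating is precisely what makes the one-sided limit correct. Second, I expect the main obstacle to be exactly the control of $\Psi$ near $r=0$ on a window that itself shrinks like $\epsilon^{3/2}$: the resolution is the competition between tracking errors that are either exponentially small or of polynomial order $\epsilon$ against the amplification $\mathcal{O}(\epsilon^{-3})$, and near $\theta=\pi/2$ one must invoke the uniqueness and smoothness of the centre manifold $W^c_\epsilon(P_1)$ from Remark~\ref{RMK:centremanifold_unique} to keep the estimate valid all the way up to the cap, where the expansion of Proposition~\ref{PROP:expansionSlowManifold} is no longer directly applicable.
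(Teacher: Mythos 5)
Your proposal is correct and follows essentially the same route as the paper, which offers no written proof at all: it simply declares the corollary ``a direct consequence of Theorems~\ref{THM:limit cycle_ original coordinates} and \ref{THM:PoincareMap},'' i.e.\ exactly your combination of the trivial spatial conjugacy under $R_\epsilon$ with the convergence and tracking results already established in the $(\theta,r)$ chart. Your elaboration goes beyond the paper in a consistent way --- in particular your explanation of why only the semidistance can hold (the ultrafast branch at height $\bar y\approx 1/\rho_\epsilon$ versus the cap $1/(\sqrt2\,\rho_\epsilon)$, and the $\Theta(\epsilon^{3/2})$ uncertainty in $\rho_\epsilon$ forcing the exclusion of $\bar\sigma_1$), and your control of the $\Theta(r^{-2})=\mathcal{O}(\epsilon^{-3})$ amplification of $\Psi$ against the exponential tracking errors --- so it fills in quantitative details the authors left implicit rather than diverging from their argument.
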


\section{Conclusion and outlook}
\label{SEC:conclusion}

Employing techniques from geometric singular perturbation theory, we have established the existence of relaxation oscillations in the Brusselator when the bifurcation parameter $b=a/\epsilon$ takes on large values. We have shown that the system exhibits a four-stroke oscillator pattern, transitioning from a superslow via an ultrafast to a slow and then to a fast regime just to finish in a superslow regime, before repeating the cycle. One of the main achievements of this work is the determination of the precise time scales within the original time variable $t$, which is challenging due to the state-dependent time rescalings such as \eqref{eq:time_change}. In our case, however, it has been possible to determine in particular the ultrafast time scale by investivating the behaviour of  the fixed point $z_{\epsilon}$ of the constructed Poincaré map $\Pi^{\epsilon}$ as $\epsilon\rightarrow 0$.

As stated in Theorem~\ref{THM:limit cycle_ original coordinates}, the superslow regime represents the slowest among the four distinguished time scales. Conversely, the ultrafast regime manifests as dynamics which, in the limit as $\epsilon\rightarrow0$, resemble an instantaneous jump of infinite length (see also the time series in Figure~\ref{fig:limitcycle_growing} (b)). As already suggested in \cite{EngelOlicon23}, we conjecture that, under the presence of a noisy perturbation as formulated in \cite{ArnoldBrus}, it is this transition between the superslow and ultrafast regimes  
which causes \textit{noise-induced finite-time instabilities} in the stochastic version of the Brusselator. We anticipate that extensions of the Brusselator, for instance where spatiotemporal dynamics or nonautonomous perturbations are involved, exhibit interesting phenomena based on the underlying dynamical features described in this paper.

These motivating examples may require a deeper understanding of the time scales involved around the nonhyperbolic equilibrium $P_0$, the partially hyperbolic equilibrium $P_1$, the fold point $F$, or the drop point $Q$, which have not been addressed in the present work.  

While the Brusselator serves as a foundational model for understanding chemical oscillations, one should bear in mind its purely theoretical character. Nevertheless, the GSPT analysis presented in this work, akin to previous studies of chemical oscillators (for instance \cite{Szmolyan09,Kosiuk11}), indicates that GSPT may be a powerful tool applicable to more realistic systems. As mentioned in \cite{Nicolis71}, time scale separations are ubiquitous in biochemical reaction networks due to the diverse mechanisms which may be involved within the reaction system. Recent studies of an urea-urease reaction as an exemplary system of a pH oscillator \cite{Winkelmann23, Straubeetal} seem to exhibit complicated fast-slow structures, induced by large differences in the values of the parameters. Such pH oscillators may well be understood by means of GSPT techniques, as elaborated in this work.

\subsection*{Acknowledgements} 
We acknowledge the support of Deutsche Forschungsgemeinschaft (DFG) through CRC 1114 and under Germany's Excellence Strategy -- The Berlin Mathematics Research Center MATH+ (EXC-2046/1, project 390685689).  
M.~E. additionally thanks the DFG-funded SPP 2298 and the Einstein Foundation for supporting his research.
G.~O.-M. also thanks FU Berlin for a 3-month Forschungsstipendium. The authors gratefully acknowledge Peter Szmolyan, Hildeberto Jard\'on-Kojakhmetov, and Samuel Jelbart for fruitful and insightful discussions. 

\appendix

\section{Taylor expansion of the slow manifolds}
In this Appendix we provide the first order approximation of both slow manifolds $\mathcal{S}_\epsilon^{1,2}$, given by Propositions \ref{PROP:expansionSlowManifold} and \ref{PROP:S1_approximation}.

\subsection{Approximation of \texorpdfstring{$\mathcal{S}^2_{\epsilon}$}{S2}}
\label{SEC:expansions}

\begin{proof}[Proof of Proposition~\ref{PROP:expansionSlowManifold}] 
Consider a general smooth 2-dimensional ODE of the form 
\begin{equation}
    \label{eq:chart1_expanded}
    \begin{array}{rcl}
        \theta'&=& f_0(\theta,r)+\varepsilon f_1(\theta,r)+\varepsilon^2 f_2(\theta, r)+ \mathcal{O}\left(\varepsilon^3\right) \\
        r' &=& g_0(\theta,r)+\varepsilon g_1(\theta,r) + \varepsilon^2 g_2(\theta,r) +\mathcal{O}\left(\varepsilon^3\right),
    \end{array}
\end{equation}
admitting a compact normally hyperbolic invariant manifold when $\varepsilon=0$, given by the graph of a smooth function $\phi_0(\theta)$. Therefore, for each $\varepsilon>0$ sufficiently small, the invariant manifold persists and is given by the graph of a smooth function $\phi_\varepsilon$. We thus consider the ansatz
\begin{equation}
\label{eq:ansatzSlowManifold}
    \phi_{\varepsilon}(\theta)=\phi_0(\theta)+\varepsilon\phi_1(\theta)+\varepsilon^2\phi_2(\theta)+\mathcal{O}\left(\varepsilon^3\right),
\end{equation}
which we plug in \eqref{eq:chart1_expanded}. Notice also that for $h\in\{f_1,f_2,g_1,g_2\}$ its Taylor expansion up to $\mathcal{O}(\varepsilon^2)$ terms is given by
\begin{equation}
    \label{eq:TaylorTerms}
        h(\theta,\phi_\varepsilon)= h(\theta,\phi_0)+
        \varepsilon \left( \partial_r h(\theta,\phi_0)\cdot \phi_1 \right)+
         \varepsilon^2\left( \frac{\partial^2_{rr}h(\theta,\phi_0) \cdot \phi_1^2}{2}+\partial_rh(\theta,\phi_0)\cdot \phi_2 \right) \nonumber 
        +\mathcal{O}\left(\varepsilon^3\right).
\end{equation}

Since $\phi_0$ is such that $f_0(\theta,\phi_0(\theta))=g_0(\theta,\phi_0(\theta))=0$, by comparing equal powers of $\varepsilon$ we obtain that
\begin{equation}
\label{eq:phi1}
    \phi_1= \frac{\phi_0'\cdot f_1-g_1}{\partial_r g_0-\phi_0'\cdot \partial_r f_0},
\end{equation}
\begin{equation}
\label{eq:phi2}
    \phi_2=\frac{\frac{1}{2}\phi_1^2(\phi_0'\cdot\partial_{rr}^2f_0-\partial_{rr}^2g_0) + \phi_1(\phi_0'\partial_r f_1+\phi_1'\partial_r f_0-\partial_r g_1)+f_1\phi_1'+f_2\phi_0'-g_2}{\partial_rg_0-\phi_0'\cdot\partial_rf_0}
\end{equation}
where all the expressions $f_i,g_i$ and their partial derivatives are evaluated in $(\theta,\phi_0)$.

In particular for \eqref{eq:MainBrusselator}, by doing the change of variable $\epsilon=\varepsilon^2$ and considering that $p(\theta,R)=aR^2-\cos\theta(\sin\theta-\cos\theta)$, we have that
\begin{equation}
\label{eq:expansionterms}
\begin{array}{rcl}
    f_0(\theta,r)&=& -\sin\theta(\sin\theta-\cos\theta)\cdot p(\theta,r)\\
    
    g_0(\theta,r)&=& -r\cos\theta(\sin\theta-\cos\theta)\cdot p(\theta,r),\\

f_1(\theta,r)&=& g_1(\theta,r) \;=\; 0 \\

f_2(\theta,r)&=& -r^2\cos\theta(\sin\theta-\cos\theta),\\

g_2(\theta,r)&=&r^3\sin\theta(\sin\theta-\cos\theta).

\end{array}
\end{equation}
On the one hand, from \eqref{eq:phi1} it follows that $\phi_1\equiv 0$. On the other hand, from \eqref{eq:phi2} and the fact that
\[
    \phi'(\theta)=\frac{-\sin\theta(\sin\theta-\cos\theta)+\cos\theta(\sin\theta+\cos\theta)}{2a\phi_0},
\]
we get 
\[
    \phi_2(\theta)=\frac{\phi_0 \left[ 
    \sin^2\theta\cos\theta-2\sin\theta\cos^2\theta-\cos^3\theta-2a\phi^2_0\sin\theta\right]}{2a\left[ -\sin^3\theta+2\sin^2\theta\cos\theta+\sin\theta\cos^2\theta-2a\phi_0^2\cos\theta\right]}.
\]
Substituting $a\phi_0^2=\cos\theta(\sin\theta-\cos\theta)$ yields
\[
    \phi_2(\theta)=\frac{-\phi_0(\theta)\cos\theta}{2a\left[ -\sin^3\theta+2\sin^2\theta\cos\theta-\sin\theta\cos^2\theta+2\cos^3\theta \right]}.
\]
Upon substituting $\sin^3\theta=\sin\theta-\sin\theta\cos^2\theta$ and $\sin^2\theta=1-\cos^2\theta$ in the denominator, we obtain
\[
    \phi_2(\theta)=-\frac{\phi_0(\theta)\cos\theta}{2a(2\cos\theta-\sin\theta)},
\]
and the result follows recalling $\epsilon=\varepsilon^2$.
\end{proof}

As a direct consequence, we can conclude  that the time scale spent near $\mathcal{S}^2_{\epsilon}$ is of order $\Theta\left( \varepsilon^{-2}\right)=\Theta \left( \epsilon^{-1} \right)$, as given in Remark~\ref{RMK:timescale1}. Indeed, upon substituting $r=\phi_{\epsilon}(\theta)$ in \eqref{eq:chart1_expanded}, due to \eqref{eq:TaylorTerms} and \eqref{eq:expansionterms}, we have that 
\[
    \theta'=\varepsilon^2 \left[ \partial_r f_0(\theta,\phi_0(\theta))\phi_2(\theta)+f_2(\theta,\phi_0(\theta)) \right] + o(\varepsilon^2).
\]
Since the right hand side is bounded from below away from $0$ and bounded from above, we have that there exist $\delta_1,\delta_2>0$ such that for any $t>0$
\[
    \delta_1 \varepsilon^2t \leq 
    \theta(t)-\theta(0) \leq
    \delta_2\varepsilon^2 t.
\]
This implies, in particular, that any suitable hitting time is of order $\Theta\left(\varepsilon^{-2}\right)= \Theta\left(\epsilon^{-1}\right)$.

\subsection{Approximation of \texorpdfstring{$\mathcal{S}^1_{\epsilon}$}{S1}}
\label{APPENDIX:approx_S1}
We now proceed to give the first order approximation of $S_{\epsilon}^1$. For simplicity, let us first consider $\theta=\pi/4+\omega$, where $\omega\in[0,\pi/4]$, and $\epsilon=\varepsilon^2$ as above, for which we refer to any slow manifold as $\mathcal{S}^1_{\varepsilon}\equiv \mathcal{S}^1_{\epsilon^{1/2}}$. Using the fact that
\[
    \sin\theta=\frac{\sin\omega +\cos\omega}{\sqrt{2}}, \qquad
    \cos\theta=\frac{ \cos\omega-\sin\omega}{\sqrt{2}},
\]
system \eqref{eq:MainBrusselator} expressed in the $\omega$-variable reads as 
\begin{equation}
\label{eq:wr_system_APP}
\begin{array}{rcl}
\omega'&=& -ar^2\sin\omega (\sin\omega+\cos\omega)+
\sin^2\omega (\sin\omega+\cos\omega)(\cos\omega-\sin\omega)\\
& & \qquad +\varepsilon^2\left[
-r^2\sin\omega(\cos\omega-\sin\omega) +
\varepsilon(a/\sqrt{2}) r^3(\cos\omega-\sin\omega)
\right],\\
\\
r'&=& -ar^3\sin\omega (\cos\omega-\sin\omega) +r\sin^2\omega (\cos\omega-\sin\omega)^2 \\
& &\qquad +\varepsilon^2\left[
r^3\sin\omega (\sin\omega +\cos\omega) -
\varepsilon(a/\sqrt{2})r^4 (\sin\omega +\cos\omega)
\right].
\end{array}
\end{equation}

\begin{proof}[Proof of Proposition~\ref{PROP:S1_approximation}]
    Recall that $S_0^1=\{\omega=0\}$, so that $S_\varepsilon^1$ is given as the graph of a function
\[
    \omega_\varepsilon(r)=\varepsilon \omega_1(r) + \varepsilon^2\omega_2(r) +
    \varepsilon^3\omega_3(r) +\mathcal{O}(\varepsilon^4).
\]
We write the system \eqref{eq:wr_system_APP} in an analogous form as in \eqref{eq:chart1_expanded}, so that it reads
\begin{align*}
    r' = & \ f_0(r,\omega)+ 
    \varepsilon f_1(r,\omega) + 
    \varepsilon^2 f_2(r,\omega)+ 
    \varepsilon^3 f_3(r,\omega)+ \mathcal{O}(\varepsilon^4)\\
    \omega' = & \ g_0(r,\omega)+ 
    \varepsilon g_1(r,\omega) + 
    \varepsilon^2 g_2(r,\omega)+ 
    \varepsilon^3 g_3(r,\omega)+ \mathcal{O}(\varepsilon^4),
\end{align*}
where,
\begin{align*}
    f_0(r,\omega)=& -ar^3\sin\omega (\cos\omega-\sin\omega) +r\sin^2\omega (\cos\omega-\sin\omega)^2 \\
    g_0(r,\omega)=& -ar^2\sin\omega(\sin\omega+\cos\omega) +\sin^2\omega (\sin\omega+\cos\omega)(\cos\omega-\sin\omega)\\
    f_1(r,\omega)= & \ g_1(r,\omega) = 0 \\
    f_2(r,\omega)= & r^3\sin\omega(\sin\omega+\cos\omega)\\
    g_2(r,\omega)= & -r^2\sin\omega(\cos\omega-\sin\omega) \\
    f_3(r,\omega)= & -\frac{ar^4}{\sqrt{2}} (\sin\omega+\cos\omega)\\
    g_3(r,\omega)= & \frac{a}{\sqrt{2}}r^3(\cos\omega-\sin\omega).
\end{align*}

Since $\omega_0\equiv 0$, it follows from \eqref{eq:phi1} that $\omega_1\equiv 0$. Therefore, by using \eqref{eq:phi2}, we have that $\omega_2\equiv 0$ as well. As a consequence, we obtain
\[
    \omega_3(r)= \frac{\omega_0'(r)\cdot f_3(r,\omega_0)-g_3(r,\omega_0)}{\partial_\omega g_0(r,\omega_0)-\omega_0'\cdot \partial_\omega f_0(r,\omega_0)} = \frac{r}{\sqrt{2}}.
\]
The result follows by replacing $\omega=\theta+\pi/4$ and $\varepsilon=\epsilon^{1/2}$.
\end{proof}

\section{Proof of exit transition}
\label{APPENDIX:proof_transition}
We devote this Appendix to prove Proposition~\ref{LEMMA:transition_exit}. Recall that we consider the sections (for appropriate positive constants $\tilde{\alpha}_{out},\tilde{\beta}_{out},\tilde{\gamma}_{out}, \tilde{\alpha}_1, \tilde{\beta}_1$, and $\tilde{\gamma}_1$)
\begin{align*}
    \Sigma_{out}=&\left\{
    (\eta_3,r_3,\tilde{\gamma}_{out}) : \eta_3\in[0,\tilde{\alpha}_{out}], r_3\in[0,\tilde{\beta}_{out}]
    \right\}, \\
    \tilde{\Sigma}_1=& \left\{
    \left(\tilde{\alpha}_1,r_3,\varepsilon_3\right) : r_3\in[0,\tilde{\beta}_1], \varepsilon_3\in[0,\tilde{\gamma}_1]
    \right\}.
\end{align*}

\begin{proof}[Proof of Proposition~\ref{LEMMA:transition_exit}]
We split the proof in six parts. First, we construct a compact region $\Xi$ for the system, where the section $\Sigma_{out}$ is contained in the boundary from where the vector field points to its interior, see Figure~\ref{fig:Exit_chart}. Secondly, we show that for any initial condition $z_0=(\eta_{3,0},r_{3,0},\varepsilon_{3,0})$ in $\Sigma_{out}$ with $\eta_{3,0}>0$ the solution for $r_3$ is bounded above and below by exponentially decaying functions. Afterwards, we prove that for such initial conditions the transition map is well defined, and in a subsequent step we give some logarithmic bounds on $T_+$. In the next step, we prove that the transition map $\Pi_{\TU{out}}$ is of the form 
\[
\Pi_{\TU{out}}\left(z_0\right)=\left(\tilde{\alpha}_1,\tilde{r}_3, \frac{\eta_{3,0}\gamma_{\TU{out}}}{\tilde{\alpha}_1} \right),
\]
where $\tilde{r}_3=\Theta\left(r_{3,0}\cdot \eta_{3,0}^3\right)$. Finally, we show that $\Pi_{out}$ is an exponential contraction for each $\eta_{3,0}>0$ fixed.

\noindent\textit{Step 1: Construction of $\Xi$.} Given $\tilde{\alpha}_1$ sufficiently small, take $\tilde{\alpha}_{out}<\tilde{\alpha}_1$. Let us consider the planar region 
\[\mathcal{R}=\left\{(\eta_3,\varepsilon_3) : \eta_3\varepsilon_3= k_0, \ \eta_3\in[0,\alpha_1], \ \varepsilon_3\in[0,\gamma_{out}], \  k_0\in[0,\alpha_1\gamma_{out}] \right\},\]
and consider the compact region (see Figure~\ref{fig:Exit_chart})
\begin{equation}
    \label{eq:Omega_entryexit}
    \Xi=\left\{ 
    (\eta_3,r_3,\varepsilon_3) : (\eta_3,\varepsilon_3)\in\mathcal{R}, \ r_3\in [0,\beta_{out}]
    \right\}.
\end{equation}

\begin{figure}[ht]
     \centering
      \begin{overpic}[width=.6\linewidth]{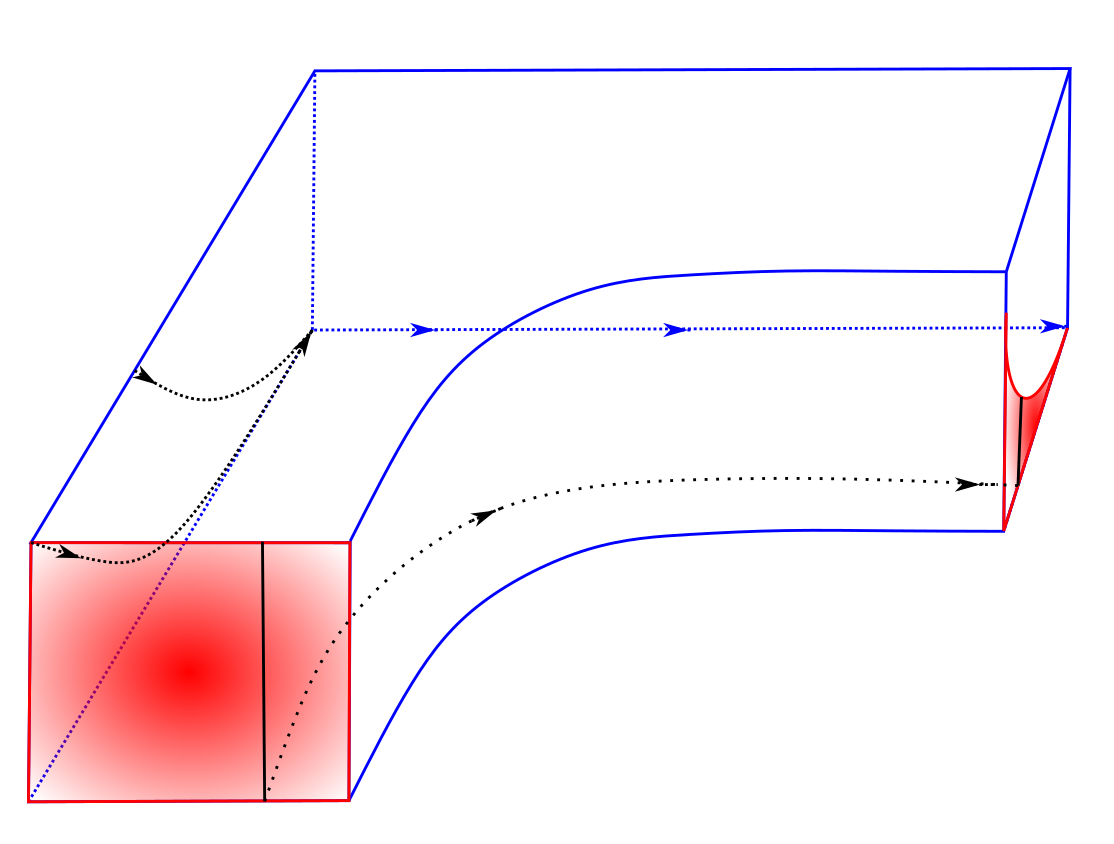}
      \put(12,55){$\Xi$}
      \put(6,20){$\Sigma_{out}$}
      \put(98,55){$\tilde{\Sigma}_1$}
      \end{overpic}
       \hfill
   \caption[Dynamics in exit chart]{A sketch of the transition $\Pi_{out}$ from $\Sigma_{out}$ to $\tilde{\Sigma}_1$ in chart $K_3$. Both $\Sigma_{out}$ and its image $\Pi_{out}(\Sigma_{out})\subset \tilde{\Sigma}_1$ are portrayed in red. Vertical lines in $\Sigma_{out}$ are mapped to vertical lines in $\tilde{\Sigma}_1$.}
        \label{fig:Exit_chart}
\end{figure}

In essence, the region $\Xi$ is bounded by the sections $\Sigma_{out}$ and $\tilde{\Sigma}_1$, and the planes $\{\eta_3=0\}$, $\{r_3=0\}$, $\{\varepsilon_3=0\}$, and $\{r_3=\beta_{out}\}$, so that orbits starting in $\Xi$ may escape only through $\Sigma_{out}$, $\{r_3=\beta_{out}\}$, or $\tilde{\Sigma}_1$. Let us rule out the first two possibilities by calculating $\varepsilon_3'$ and $r_3'$, respectively. Indeed, for simplicity let us denote $\tilde{\alpha}_1=\delta$, where $\delta\ll 1$, such that for all $\eta_3\leq \tilde{\alpha}_1$ we have
\begin{equation}
\label{eq:dummy3}
\begin{split}
    1 \leq &\sin\eta_3^6+\cos\eta_3^6 \leq 1+\delta^6,\\
    1-2\delta^6\leq &\cos\eta_3^6-\sin\eta_3^6 \leq 1,\\
    1-2\delta^6\leq &H(\eta_3^6) \leq 1.
\end{split}
\end{equation}
    Therefore, by substituting $\varepsilon_3=\gamma_{out}$ and $\eta_3=k_0/\gamma_{out}$ in \eqref{eq:blowchart2} we obtain
\begin{align*}
    \frac{6\varepsilon_3'}{\gamma_{out}}& \leq ar_3^2(1+\delta^6)-(1-2\delta^6)^3-\frac{a}{\sqrt{2}}+k_0^2r_3^2(1-2\delta^6) \\
    & \leq a\beta_{out}^2-1 +\mathcal{O}(\delta^6)+\mathcal{O}(k_0^2).
\end{align*}
Assuming that $\beta_{out}< \frac{1}{\sqrt{a}}$ we have that $\epsilon_3'<0$ on $\Sigma_{out}$, and thus the vector field defining \eqref{eq:blowchart2} points to the interior of $\Omega$ on $\Sigma_{out}$.

In a similar way, the vector field on $\{r_3=\beta_{out}\}$ points inwards, since by gathering the linear and cubic terms in $r_3$ in \eqref{eq:blowchart2} we get
\begin{equation}
\label{eq:dummy2}
\begin{split}
     r_3'\leq & r_3^3H(\eta_3^6)\left[ \frac{a+k_0^2}{2}(\sin\eta_3^6+\cos\eta_3^6)-a\eta_3^6(\cos\eta_3^6-\sin\eta_3^6) +\eta_3^6k_0^2(\sin\eta_3^6+\cos\eta_3^6) \right] \\
     & \quad -r_3 H^2(\eta_3^6) (\cos\eta_3^6-\sin\eta_3^6)\left[ \frac{\sin\eta_3^6+\cos\eta_3^6}{2} -\eta_3^6(\cos\eta_3^6-\sin\eta_3^6) \right],
\end{split}
\end{equation}
where the inequality holds since all the $\mathcal{O}(\eta_3^4)$ terms are negative. Using \eqref{eq:dummy3}, we obtain
\begin{equation}
\label{eq:dummy1}
    r_3'\leq -Cr_3+Dr_3^3,
\end{equation}
where
\begin{align*}
    C\equiv &C(\delta):= (1-2\delta^6)^3\left( \frac{1}{2}-\delta^6\right)= \frac{1}{2}+\Theta(\delta^6) \\
    D\equiv &D(\delta,k_0) :=  (1+\delta^6)\left[\frac{a+k_0^2}{2}+\delta^6 k_0^2\right]= \frac{a}{2} + \Theta\left( k_0^2+\delta^6 \right).
\end{align*}
Therefore, in the plane $\{r_3=\beta_{out}\}$ we have that
\[
    \frac{2r_3'}{\beta_{out}}\leq a\beta_{out}^2-1 + \mathcal{O}(k_0^2+\delta^6),
\]
and the claim follows since $\beta_{out}<1/\sqrt{a}$.

\noindent\textit{Step 2: exponential decay of $r_3$.} We now show that $r_3(t)$ is bounded above and below by an exponentially decreasing function as long as the orbit remains in $\Xi$. For each initial condition $(\eta_{3,0},r_{3,0},\varepsilon_{3,0})\in\Xi$, let $T_+$ be the first time its orbit leaves the region $\Xi$. From step 1 above, $T_+$ is simply
\[
    T_+\equiv T_+(\eta_{3,0},r_{3,0},\varepsilon_{3,0}):=
    \inf \{t\geq 0 : \eta_3(t)> \alpha_1\},
\]
that is the first time a trajectory starting at $(\eta_{3,0},r_{3,0},\varepsilon_{3,0})$ hits the section $\tilde{\Sigma}_1$. 

From \eqref{eq:dummy1}, it follows that 
\[
    r_3'\leq r_3\left( Dr_3^2-C \right),
\]
where the factor $Dr_3^2-C$ is negative if and only if $r_3\leq \sqrt{C/D}$. Since 
\[
    \lim_{\delta\rightarrow0}\lim_{k_0\rightarrow0} \frac{C}{D}=\frac{1}{a},
\]
there exists $k_0(\delta)$ sufficiently small such that for all $t\in[0,T_+]$
\[
    \frac{r_3'}{r_3\left(Dr_3^2-C\right)}\geq 1.
\]
Integrating from $0$ to $t$, factoring $1/D$ on the left side, and letting $F=\sqrt{C/D}$, one obtains
\[
    t\leq \frac{1}{D}\int_{r_3^0}^{r_3(t)}{\frac{ds}{s(s^2-F^2)}}=
    \frac{1}{C} \int_{r_3^0}^{r_3(t)}{\left[ -\frac{1}{s}+ \frac{1}{2(s-a)}+ \frac{1}{2(s+a)}\right]ds}.
\]
Hence,
\[
e^{2Ct}\leq \frac{r_{3,0}^2 (r_3-F)(r_3+F)}{r_3^2(r_{3,0}-F)(r_{3,0}+F)}.
\]
Since $r_{3,0}-F$ is negative, we conclude that
\[
    r_3^2\leq \frac{r_{3,0}^2 F^2}{1+ (F^2-r_{3,0}^2)e^{2Ct}}\leq K^2r_{3,0}^2e^{-2Ct},
\]
implying that $r_3(t)\leq K r_{3,0}e^{-Ct}$, where $K:=F^2/(F^2-\tilde{\beta}_{out}^2)$. In other words, if $t\leq T_+$, we have $r_3(t)=\mathcal{O}\left( r_{3,0} e^{-Ct} \right)$.

Analogously, for the lower bound, we estimate from below by removing the last summand in the equation for $r_3$ in \eqref{eq:blowchart2}. Indeed, this term is positive since
\begin{align*}
     -ar_3^2(\cos\eta_3^6-\sin\eta_3^6)H\left(\eta_3^6\right) +(\cos\eta_3^6-\sin\eta_3^6)^2 H\left(\eta_3^6\right)^2 -
    \frac{a}{\sqrt{2}}\varepsilon_3^3r_3^3 (\cos\eta_3^6-\sin\eta_3^6) 
    \\
    \geq 
    (\cos\eta_3^6-\sin\eta_3^6) \left( 1-a\beta_{\TU{out}}^2- \frac{a\beta_{\TU{out}}^3\gamma_{\TU{out}}^3}{\sqrt{2}}+\Theta(\delta^6)\right) \geq 0,
\end{align*}
for $\delta, \beta_{\TU{out}},\gamma_{\TU{out}}$ sufficiently small. By assuming that $\beta_{\TU{out}}<1$ so that $r_3^3<r_3^2$, we can lower bound $r_3'$ as
\[
    r_3'\geq \frac{r_3}{2} \left[ 
    \left(1-2\delta^6- \frac{1}{\sqrt{2}}\varepsilon_3^3\right)ar_3^2- (1+\delta^6)
    \right],
\]
where we used \eqref{eq:dummy3}. Since $r_3\leq \tilde{\beta}_{out}$ and $\varepsilon_3\leq \varepsilon_{3,0} \leq \gamma_{\TU{out}}$, we obtain that
\[
    r_3'\geq r_3\left(\tilde{D}r_3^2-\tilde{C}\right),
\]
where
\begin{equation}
    \label{eq:coeffs} 
    \tilde{C}:=\frac{1+\delta^6}{2}, \quad
    \tilde{D}:= \frac{a(1-2\delta^6-\gamma_{\TU{out}}^3/\sqrt{2})}{2}.
\end{equation}
Similarly to before, the factor $(\tilde{D}r_3^2-\tilde{C})$ is negative if and only if $r_3\leq \tilde{F}:=\sqrt{\tilde{C}/\tilde{D}}$, which holds true for sufficiently small $\delta$ and $\varepsilon_{3,0}$ since 
\[
    \lim_{\delta\rightarrow 0}\lim_{\gamma_{\TU{out}}\rightarrow0}\tilde{F}=\frac{1}{\sqrt{a}}.
\]
Therefore we have
\[
    \frac{r_{3,0}^2(r^2-\tilde{F}^2)}{r^2(r_{3,0}^2-\tilde{F}^2)}\leq e^{2\tilde{C}t}.
\]
Since $r_{3,0}^2-F^2<0$, it follows that
\[
    r^2(t)\geq \frac{r_{3,0}^2\tilde{F}^2}{r_{3,0}^2+(\tilde{F}^2-r_{3,0}^2)e^{2\tilde{C}t}}= \frac{r_{3,0}^2\tilde{F}^2}{r_{3,0}^2e^{-2\tilde{C}t}+\tilde{F}^2-r_{3,0}^2}e^{-2\tilde{C}t}\geq r_{3,0}^2 e^{-2\tilde{C}t}.
\]
The above calculation implies that $r_3(t)=\Omega\left( r_{3,0}e^{-\tilde{C}t} \right)$. 

\noindent\textit{Step 3: $T_+$ is finite when $\eta_{3,0}>0$}. Let $(\eta_{3,0},r_{3,0},\varepsilon_{3,0})\in \Xi$ with $\eta_{3,0}>0$. We show by contradiction that $T_+(\eta_{3,0},r_{3,0},\varepsilon_{3,0})$ is finite. Indeed, assume that for an initial condition $T_+=\infty$. Therefore, due to Step 2 above, $r_3(t)\rightarrow 0$ as $t\rightarrow \infty$ and the orbit accumulates on the curve $\{r_3=0, \ \varepsilon_3\eta_3=\varepsilon_{3,0}\eta_{3,0}\}$, and in ultimate instance it converges to an equilibrium point lying on such curve.
The contradiction follows from the fact that there are no equilibria on $\{r_3=0\}$, since the equation for $\eta_3$ reads as
\[
    \eta_3'=\frac{\eta_3}{6}(\sin\eta_3^6+\cos\eta_3^6)(\cos\eta_3^6-\sin\eta_3^6)H(\eta_3^6),
\]
for which $\eta_3'>0$ whenever $\eta\in[0,\delta]$. Since $T_+<\infty$, this implies that $\Pi_{out}$ is well defined. We can extend the domain of $\Pi_{out}$ to points $(0,r_{3,0},\gamma_+)$ as
\[
    \Pi_{out}\left(0,r_{3,0},\gamma_+\right)=
    \left(\alpha_1,0,0\right).
\]

\noindent\textit{Step 4: There exist $c_1,c_2>0$ and $d_1(\delta),d_2(\delta)\rightarrow1$ as $\delta\rightarrow0$, such that
\begin{equation}
\label{eq:hitting_bounds}
d_1\ln\left(\tilde{\alpha}_1/\eta_{3,0}\right)^6-c_1 r_{3,0}^3 \leq T_+\leq
d_2\ln\left(\tilde{\alpha}_1/\eta_{3,0}\right)^6+c_2 r_{3,0}^2.
\end{equation}
}
From \eqref{eq:blowchart2} and \eqref{eq:dummy3}, by removing the negative terms in the equation for $\eta_3$, it follows that 
\[
    \eta_3'\leq \frac{\eta_3}{6}\left[
    (1+\delta^6) +\frac{a\varepsilon_{3,0}^3 r_3^3}{\sqrt{2}} 
    \right]
    \leq 
    \frac{\eta_3}{6}\left[
    (1+\delta^6) +\frac{a\varepsilon_{3,0}^3 K^3 r_{3,0}^3e^{-3Ct}}{\sqrt{2}} 
    \right],
\]
where we used Step 1 in the last inequality. By considering $\eta_{3,0}>0$, dividing by $\eta_3$, and integrating from $0$ to $T_+$, we obtain
\[
    \ln\frac{\eta_3}{\eta_{3,0}}\leq\frac{1+\delta^6}{6}t+\frac{a \varepsilon_{3,0}^3 K^3 r_{3,0}^3}{\sqrt{2}}\int_0^t{e^{-3Cs}ds}
    \leq \frac{1+\delta^6}{6}t+\frac{a \varepsilon_{3,0}^3 K^3 r_{3,0}^3}{3\sqrt{2}C}.
\]
Evaluating at $t=T_+$, and since $\eta_3(T_+)=\tilde{\alpha}_1$, it yields
\[
    \label{eq:hittingtime_lowerbound}
    T_+\geq \frac{1}{1+\delta^6}\ln\left(  \frac{\tilde{\alpha}_1}{\eta_{3,0}}\right)^6
    -\frac{a \varepsilon_{3,0}^3 K^3}{3\sqrt{2}C} r_{3,0}^3 \geq d_1\ln\left(  \frac{\tilde{\alpha}_1}{\eta_{3,0}}\right)-c_1r_{3,0}^3,
\]
for some $c_1>0$ and $d_1(\delta)=(1+\delta^6)^{-1}$, yielding thus the lower bound in \eqref{eq:hitting_bounds}.

Similarly, for the upper bound,
\[
    \eta_3'\geq \frac{\eta_3}{6}\left( 
    (1-2\delta^6)^3-(1+\delta^6)(a+k_0^2)r_3^2
    \right).
\]
By taking $\eta_{3,0}>0$, dividing by $\eta_3$, and integrating from $0$ to $t$, it follows again from Step 1 that
\[
    \ln\left( \frac{\eta_3}{\eta_{3,0}}\right)^6\geq (1-2\delta^6)^3t -
    \frac{r_{3,0}^2(1+\delta^6)(a+k_0^2)}{2C}.
\]
Upon evaluating at $t=T_+$,
\[    
T_+\leq d_2 \ln\left( \frac{\tilde{\alpha}_1}{\eta_{3,0}}\right)^6 +c_2 r_{3,0}^2,
\]
for some $c_2>0$ and $d_2(\delta)=(1-2\delta^6)^{-3}$, yielding the desired upper bound in \eqref{eq:hitting_bounds}. Notice in particular that this upper bound implies that for $\eta_{3,0}$, the transition map $\Pi_{\TU{out}}$ is well defined.

\noindent\textit{Step 5: $\tilde{r}_3=\Theta\left( r_{3,0} \eta_{3,0}^3\right)$.} Notice that $\tilde{r}_3(\eta_{3,0},r_{3,0},\varepsilon_{3,0})=r_3(T_+)$. Recall from Step 1 that $r_{3,0}e^{-\tilde{C}T_+}\leq r_3(T_+)\leq K r_{3,0}e^{-CT_+}$. Using the lower bound in \eqref{eq:hitting_bounds}, we obtain
\[
    \tilde{r}_3\leq Kr_{3,0}\left[
    \left( \frac{\eta_{3,0}}{\alpha_1} \right)^{6C(\delta)d_1(\delta)} \cdot
    e^{c_1r_{3,0}^3}
    \right].
\]
Since $C(\delta)\rightarrow \frac{1}{2}$ and $d_1(\delta)\rightarrow 1$ as $\delta\rightarrow 0$, we have that $\tilde{r}_3=\mathcal{O}\left( r_{3,0} \eta_{3,0} ^3 \right)$.

Similarly, using the upper bound in \eqref{eq:hitting_bounds}, and recalling that $\tilde{C}(\delta)\rightarrow 1/2$ as $\delta\rightarrow0$, we get that $\tilde{r}_3=\Omega \left( r_{3,0} \eta_{3,0}^3 \right)$, and thus $\tilde{r}_3=\Theta \left( r_{3,0} \eta_{3,0}^3 \right).$

\noindent\textit{Step 6: $\Pi_{out}$ is a $\mathcal{O}\left( \eta_{3,0}^3 \right)$-contraction.} This step follows from Steps 2 and 4 above, where for $\eta_{3,0}>0$ fixed,
\begin{align*}
    \Vert\Pi_{out}(\eta_{3,0},r_{3,0},\tilde{\gamma}_{out})- \Pi_{out}(\eta_{3,0},\hat{r}_{3,0},\tilde{\gamma}_{out})\Vert &\leq 
\Vert \Pi_{out}(\eta_{3,0},\tilde{\beta}_{out},\tilde{\gamma}_{out})- \Pi_{out}(\eta_{3,0},0,\tilde{\gamma}_{out})\Vert \\
&\leq K\beta_{out}e^{-CT_+},
\end{align*}
for any pair $r_{3,0},\hat{r}_{3,0}\in[0,\tilde{\beta}_{out}]$, where we take $K=F^2/(F^2-\tilde{\beta}^2_{out})$ as given above. 
The result follows by using \eqref{eq:hitting_bounds}.
\end{proof}

\bibliography{bibliography}
\bibliographystyle{amsplain}

\end{document}